\theoremstyle{definition}
\theoremstyle{definition}
\theoremstyle{plain}
\newtheorem{theo}{Theorem}
\theoremstyle{plain}
\newtheorem{theor}{Theorem}
\theoremstyle{plain}
\theoremstyle{plain}
\theoremstyle{plain}
\newtheorem{thm}{Theorem}[subsection]
\theoremstyle{definition}
\theoremstyle{definition}
\theoremstyle{definition}
\theoremstyle{definition}
\theoremstyle{definition}
\newtheorem{rem}[thm]{Remark}
\theoremstyle{plain}
\newtheorem{prop}[thm]{Proposition}
\theoremstyle{plain}
\newtheorem{lem}[thm]{Lemma}
\theoremstyle{plain}
\newtheorem{cor}[thm]{Corollary}
\theoremstyle{definition}
\theoremstyle{definition}
\theoremstyle{definition}
\theoremstyle{definition}
\theoremstyle{definition}
\numberwithin{equation}{subsection}
\def\Z{\mathbb{Z}}
\def\F{\mathbb{F}}
\def\Ker{{\rm Ker}}
\newcommand{\bbP}{{\operatorname{\bf P}}}
\newcommand{\Proj}{\operatorname{Proj}}
\newcommand{\Gal}{\operatorname{Gal}}
\newcommand{\GL}{\operatorname{GL}}
\newcommand{\gA}{\operatorname{A}}
\newcommand{\gB}{\operatorname{B}}
\newcommand{\gC}{\operatorname{C}}
\newcommand{\gH}{\operatorname{H}}
\newcommand{\gO}{\operatorname{O}}
\newcommand{\gT}{\operatorname{T}}
\newcommand{\gU}{\operatorname{U}}
\newcommand{\gV}{\operatorname{V}}
\newcommand{\gW}{\operatorname{W}}
\newcommand{\Aut}{\operatorname{Aut}}
\newcommand{\diag}{\operatorname{diag}}
\newcommand{\modulo}{\operatorname{mod}}
\newcommand{\Tr}{\operatorname{Tr}}
\newcommand{\cris}{\operatorname{cris}}
\newcommand{\dR}{\operatorname{dR}}
\def\@seccntformat#1{\csname the#1\endcsname. }
\renewcommand\section{\@startsection {section}{1}{\z@}%
 {-3.5ex \@plus -1ex \@minus -.2ex}%
 {2.3ex \@plus.2ex}%
 {\normalfont\large\bfseries}}
\begin{document}

\title
{\bf Enumerating superspecial curves of genus $4$\\ over prime fields}
\author
{Momonari Kudo\thanks{Graduate School of Mathematics, Kyushu University.
E-mail: \texttt{m-kudo@math.kyushu-u.ac.jp}}
\ and Shushi Harashita\thanks{Graduate School of Environment and Information Sciences, Yokohama National University.
E-mail: \texttt{harasita@ynu.ac.jp}}}
\maketitle
\begin{abstract}
In this paper we enumerate nonhyperelliptic
superspecial curves of genus $4$ over prime fields of characteristic $p\le 11$.
Our algorithm works for nonhyperelliptic curves
over an arbitrary finite field in characteristic $p \ge 5$.
We execute the algorithm for prime fields of $p\le 11$ with our implementation on a computer algebra system Magma.
Thanks to the fact that
the cardinality of $\F_{p^a}$-isomorphism classes of
superspecial curves over $\F_{p^a}$ of a fixed genus 
depends only on the parity of $a$, this paper contributes
to the odd-degree case for genus $4$, whereas
\cite{KH16} contributes to the even-degree case.
\end{abstract}

\section{Introduction}
\if 0
In 1970, Goppa~\cite{Goppa} discovered algebraic geometric codes.
In algebraic geometric codes, curves with more rational points permit larger threshold of error correction.
A curve $C$ over the finite field $\F_q$ with $q$ elements of genus $g$
is called {\it maximal} (resp. {\it minimal})
if the number of $\F_q$-rational points on $C$ is $q + 1 + 2 g \sqrt{q}$ (resp. $q + 1 - 2 g \sqrt{q}$), which is called the Hasse-Weil upper (resp. lower) bound.
Therefore, maximal curves play a central role in algebraic geometric codes (see e.g. \cite{WCC2015}).
However, for a fixed pair $(g, q)$, maximal curves over $\mathbb{F}_q$ of genus $g$ are very rare, and thus finding maximal curves is well-known as a difficult problem in algebraic geometry.
The notion of {\it superspecial} curves helps us to find maximal curves.
Specifically, a curve over a field $K$ of characteristic $p>0$
is called superspecial if its Jacobian
is isomorphic to a product of supersingular elliptic curves over
the algebraic closure $\overline K$ of $K$.
It is known that any maximal or minimal curve $C$ over $\F_{p^2}$
is superspecial.
Conversely any superspecial curve descends to a maximal or minimal curve
over $\F_{p^2}$, see the proof of \cite[Theorem 1.1]{Ekedahl}.
This work focuses on enumerating superspecial curves for finding all maximal curves.
\fi

In this paper a curve means a non-singular projective variety of dimension one.
A curve over a perfect field $K$ of characteristic $p>0$ is said to be {\it superspecial} if its Jacobian
is isomorphic to a product of supersingular elliptic curves over
the algebraic closure $\overline K$ of $K$.
This paper aims to enumerate nonhyperelliptic superspecial curves of genus $4$ over prime fields $\F_p$ for $p\le 11$.

This work contributes to the problem on finding or enumerating
maximal or minimal curves over $\F_{p^2}$,
since it is known that any maximal or minimal curve over $\F_{p^2}$
is superspecial.
Conversely any superspecial curve descends to a maximal or minimal curve
over $\F_{p^2}$, see the proof of \cite[Theorem 1.1]{Ekedahl}.

The motivation to study the case over {\it prime} fields comes from the fact that
the enumeration over $\F_p$ and $\F_{p^2}$ is essential
for that over general finite fields.
Indeed, in Proposition \ref{PropositionGeneralFiniteFields}
we shall see the general fact that
the number of $\F_{p^a}$-isomorphism classes of
superspecial curves over $\F_{p^a}$ of fixed genus depends only on the parity of $a$,
see also \cite[Theorem 1.3]{XYY16} by Xue, Yang and Yu
for an analogous result in the case of abelian varieties.

\if 0
\renewcommand{\arraystretch}{1.5}
\begin{table}[t]
\centering{
\caption{
Main references to enumerations of $K$-isomorphism classes of superspecial curves of genus $g$ over $\mathbb{F}_q$ for $g\le 4$
(replace ``curves" by ``elliptic curves" if $g=1$).
There are two cases $q=p^{2e-1}$ or $p^{2e}$ for each $g$, where $e$ is a natural number.
}
\label{table:2}
\begin{tabular}
{c||c||c|ccc||c|c||c}
\hline
\backslashbox{~$g$}{$q$~~} &
$p\le 3$
& \multicolumn{1}{c}{$5^{2 e -1}$} & \multicolumn{1}{c}{$5^{2 e}$} & \multicolumn{1}{||c}{~~$7^{2 e-1}$} & \multicolumn{1}{c|}{$7^{2 e}$} & \multicolumn{1}{||c}{~~$11^{2 e-1}$~~} & \multicolumn{1}{c||}{$11^{2 e}$~} & $p\ge 13$\\
\hline \hline
$1$ & \multicolumn{8}{c}{$(K=\overline{\F_q})$: Deuring \cite{Deuring}, \quad $(K=\F_q)$: Xue-Yang-Yu \cite[Prop. 4.4]{XYY16}}  \\ \hline
$2$ & \multicolumn{8}{c}{
$(K=\overline{\F_q})$:
Hashimoto-Ibukiyama \cite{HI} for $p^{2e}$,
Ibukiyama-Katsura \cite{IK} for $p^{2e-1}$
} \\
\hline
$3$ & \multicolumn{8}{c}{
$(K=\overline{\F_q})$: Hashimoto \cite{Hashimoto} for $p^{2e}$,\quad
Existence for $p^{2e-1}$: Ibukiyama \cite{Ibukiyama} 
} \\
\hline
\multirow{2}{*}{$4$} & \multicolumn{1}{c||}{Non-Existence} & ~{\bf Main}~ & \multirow{2}{*}{\cite{FGT}, \cite{KH16}} & \multicolumn{2}{||c||}{Non-Existence} & {\bf Main} &  Existence &No general\\
& \multicolumn{1}{c||}{by Ekedahl~\cite{Ekedahl}} & {\bf Thm.} &
& \multicolumn{2}{||c||}{by \cite{KH16}} & {\bf Thm.}  & (e.g. \cite{ManyPoints})&result\\ \hline
\end{tabular}
}
\end{table}
\fi
In the literature, there are many works on the enumeration of superspecial curves
over algebraically closed field.
The case of elliptic curves is due to Deuring \cite{Deuring}.
If $g\le 3$, some theoretical approaches
are available,
since any principally polarized abelian variety of dimension $g\le 3$
is the Jacobian variety of a (possibly reducible) curve, see Oort-Ueno \cite{OU}.
In the case of principally polarized abelian varieties,
the number of isomorphism classes of superspecial ones is described by a class number of
a quaternion unitary group, see Ibukiyama-Katsura-Oort \cite[Theorem 2.10]{IKO},
and the explicit formulae of those class numbers are given by Hashimoto-Ibukiyama \cite{HI} for $g=2$
and by Hashimoto \cite{Hashimoto} for $g=3$.
The enumeration of superspecial curves for $g\le 3$ is done by
removing the contribution of reduced curves.
Contrary to this story over algebraically closed field,
such explicit enumerations over finite fields have not been completed yet,
except for $g=1$ case (cf. Xue-Yang-Yu \cite[Prop. 4.4]{XYY16}).
But some results on the existence are known. For example,
%
it is shown that
there exists a maximal curve of genus $g$
over $\F_{p^{2e}}$ if $g=2$ and $p^{2e}\ne 4,9$
(cf. Serre \cite[Th\'eor\`eme 3]{Serre1983})
and if $g=3$, $p\ge 3$ and $e$ is odd
(cf. Ibukiyama \cite[Theorem 1]{Ibukiyama}).
See Ibukiyama-Katura \cite{IK} for the enumeration
of principally polarized abelian varieties over $\overline{\F_p}$
which can descend to those over $\F_p$.

If $g\ge 4$,
any theory working for curves of genus $g$ in arbitrary large characteristic $p$
has not been found.
The case of $g=4$ is a next target;
For $p=5$, Fuhrmann-Garcia-Torres~\cite{FGT} found a maximal curve $C_{0}$ of genus $4$ over $K= \F_{25}$, 
and proved that it gives a unique isomorphism class over $\overline{K}$.
For $p \leq 7$, all superspecial curves of genus $4$ over $\F_{p^2}$ were computationally enumerated in \cite{KH16}.
In particular, the result of~\cite{KH16} enumerated all the maximal curves over $K=\mathbb{F}_{25}$, which are included in the unique isomorphism class of $C_0$ over $\overline{K}$.
The result over $\F_{49}$,
together with results in Serre~\cite{Serre}, Howe~\cite{Howe}
and Howe-Lauter \cite{Howe-Lauter},
determined the exact value of the maximal number $N_{49}(4)$ of
the rational points of curves of genus $4$ over $\F_{49}$,
see \cite[Corollary 5.1.3]{KH16}.
This contributed to the table at {\tt manypoints.org}~\cite{ManyPoints}
about bounds of $N_q(g)$, updated
after the paper \cite{GV} by van der Geer and Vlugt.


\if 0
Let $p$ be a prime number.
Let $K$ be a perfect field of characteristic $p$.
Let $\overline K$ denote the algebraic closure of $K$.
In this paper a curve means a non-singular projective variety of dimension $1$.
A curve over $K$ is called {\it superspecial} if its Jacobian
is isomorphic to a product of supersingular elliptic curves over $\overline K$.
It is known that $C$ is superspecial if and only if the Frobenius on
$H^1(C,{\mathcal O}_C)$ is zero.

We are concerned about the existence and
the enumeration of superspecial curves of genus $g$.
If $g\le 3$, some theoretical approaches are available,
which are based on Torelli's theorem.
In particular, it is known that
there exists a superspecial curve of genus $g$
in characteristic $p$ if $g=2$ and $p\ge 5$
(cf. Serre \cite{Serre1983}, Th\'eor\`eme 3)
and if $g=3$ and $p\ge 3$ (cf. Ibukiyama \cite{Ibukiyama}, Theorem 1).
But if $g\ge 4$,
any theory stating such a thing for arbitrary large $p$
has not been found.
Our interest goes to the case of $g=4$ with small $p$.

In \cite{KH16} we enumerated 
superspecial curves of genus $4$ over $\F_{p^2}$ for $p\le 7$.
The result over $\F_{25}$ implies that there is a unique maximal curve over $\F_{25}$, which was a computational re-proof of a result of Fuhrmann-Garcia-Torres \cite{FGT}.
The result over $\F_{49}$,
together with results in Serre \cite{Serre}, Howe \cite{Howe}
and Howe-Lauter \cite{Howe-Lauter},
determined the exact value of the maximal number $N_{49}(4)$ of
the rational points of curves of genus $4$ over $\F_{49}$,
see \cite{KH16}, Corollary 5.1.3.
This contributed to the table at {\tt manypoints.org}
about bounds of $N_q(g)$, updated
after the paper \cite{GV} by van der Geer and Vlugt.
\fi

\if 0
This work aims to enumerate nonhyperelliptic superspecial curves of genus $4$ over prime fields $\F_p$ for $p\le 11$.
\if 0
This work aims to enumerate nonhyperelliptic superspecial curves of genus $4$ over prime fields $\F_p$ for $p=5$ and $11$.
Here is our main theorem:
This paper is a sequel to \cite{KH16} and aims to enumerate nonhyperelliptic superspecial curves of genus $4$ over prime fields $\F_p$ for $p\le 11$.
\fi
The motivation to study the case over prime fields comes from the fact that
the enumeration over $\F_p$ and $\F_{p^2}$ is essential
for that over general finite fields.
Indeed, in Proposition \ref{PropositionGeneralFiniteFields}
we shall see the general fact that
the number of $\F_{p^a}$-isomorphism classes of
superspecial curves over $\F_{p^a}$ of fixed genus depends only on the parity of $a$,
see \cite[Theorem 1.3]{XYY16} by Xue, Yang and Yu
for an analogous result in the case of abelian varieties.
\fi 

There is no superspecial curve of genus $g=4$ over $\F_p$ for $p = 2,3$ by \cite[Theorem 1.1]{Ekedahl},
and for $p=7$ by \cite[Theorem B]{KH16}.
Here are our main theorems:

\begin{theo}\label{MainTheorem}
There exist precisely $7$ superspecial curves of genus $4$ over $\mathbb{F}_{5}$ up to isomorphism over $\mathbb{F}_{5}$.
(Note that there exists precisely $1$ superspecial curve of genus $4$ over $\mathbb{F}_{5}$ up to isomorphism over the algebraic closure,
cf. \cite[Corollary 5.1.1]{KH16}.)
\end{theo}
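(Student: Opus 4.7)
The plan is to execute an explicit enumeration and classification of quadric--cubic pairs. Since every non-hyperelliptic curve of genus $4$ canonically embeds in $\mathbb{P}^{3}$ as the complete intersection of an irreducible quadric $Q$ and an irreducible cubic $P$, we enumerate such pairs $(Q,P)$ over $\F_{5}$ whose intersection is smooth and superspecial, and then count the resulting curves up to $\F_{5}$-isomorphism. Because $p=5$ is odd, the quadric $Q$ has rank either $3$ (a cone) or $4$ (smooth), and the rank-$4$ stratum splits further into two $\GL_{4}(\F_{5})$-orbits distinguished by whether the discriminant is a square in $\F_{5}^{\times}$. We first fix a representative $Q$ in each of the three $\GL_{4}(\F_{5})$-orbits.

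For each normalized $Q$, the cubic $P$ is parameterized modulo $Q\cdot\langle x_{1},\ldots,x_{4}\rangle$ and modulo $\F_{5}^{\times}$-scaling. The superspeciality condition is equivalent to the vanishing of the Hasse--Witt matrix of $V(Q)\cap V(P)$, which in turn translates into an explicit system of polynomial equations in the coefficients of $P$, obtained by reading off prescribed monomial coefficients of $(PQ)^{p-1}=(PQ)^{4}$. Solving this system over $\F_{5}$ in Magma and then discarding those $P$ for which $V(Q)\cap V(P)$ is singular yields a finite list of superspecial pairs $(Q,P)$.

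Finally, to count $\F_{5}$-isomorphism classes, we implement the $\PGL_{4}(\F_{5})$-equivalence test on the surviving list: two pairs $(Q_{i},P_{i})$ define $\F_{5}$-isomorphic curves if and only if there exist $g\in\GL_{4}(\F_{5})$ and $\lambda,\mu\in\F_{5}^{\times}$ with $Q_{1}\circ g = \lambda Q_{2}$ and $P_{1}\circ g \equiv \mu P_{2}\pmod{Q_{2}}$. The main obstacle is the sheer size of the ambient space of cubics: without first quotienting by the (often large) stabilizer of $Q$ inside $\GL_{4}(\F_{5})$, the enumeration is infeasible, so an effective pre-reduction of the parameter space before imposing the Hasse--Witt equations is the crucial practical step. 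The $\overline{\F_{5}}$-classification in \cite[Corollary 5.1.1]{KH16}, which yields a single geometric isomorphism class, then serves as a consistency check: the expected number $7$ must equal the number of $\F_{5}$-forms of that unique superspecial curve.
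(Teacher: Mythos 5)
Your proposal follows the paper's overall strategy: canonical model as $V(Q,P)\subset\mathbf P^3$, classification of quadrics into three $\GL_4(\F_5)$-orbits, normalization of $P$ modulo $Q$, translation of superspeciality into vanishing of prescribed coefficients of $(PQ)^{p-1}$, a non-singularity filter, and finally a $\PGL_4$-equivalence test. Two points distinguish the paper's proof, one a shortcut you forgo, the other a genuine gap in your plan.

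First, the shortcut: the paper does not enumerate over all three quadric orbits. Since superspeciality and the rank of the canonical quadric are geometric invariants, and since by \cite[Corollary~5.1.1]{KH16} the unique superspecial genus-$4$ curve over $\overline{\F_5}$ lies on a rank-$3$ (degenerate) quadric, any superspecial curve over $\F_5$ must have degenerate quadric. The paper therefore restricts at once to the (Dege) form $Q=2yw+z^2$. Your plan to run the enumeration in all three strata is harmless but wasteful and misses an easy structural observation.

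Second, and more seriously: you flag ``an effective pre-reduction of the parameter space'' as the crucial step but leave it entirely unspecified, and this is precisely where the $\F_5$ case hides a pitfall. The reduction of cubic forms developed in \cite[Section~4.5]{KH16} for the degenerate quadric assumes $q>5$; its key normalization step applies an element of $\gO_\varphi(\F_q)$ stabilizing $x$ to make the $y^3$-coefficient of $P$ non-zero, and this can fail over $\F_5$ because a degree-$4$ polynomial in the single transformation parameter can vanish on all five elements of $\F_5$. Section~\ref{ReductionDegenerateCase} of the present paper analyzes this failure and shows $P$ is then forced into the shape $a_0x^3+(a_1y^2+\cdots+a_5zw)x+a_6(y^2z+zw^2)$, from which the additional normal form~(2) of Lemma~\ref{ReductionLemmaDegenerate} arises. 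That second normal form is exactly what produces $C_7=V(2yw+z^2,\,x^3+y^2z+zw^2)$, one of the seven classes in the theorem. A reduction transplanted directly from the generic $q>5$ case would silently miss it, and your proposal gives no indication that the small-field case requires a new normal form. Without this you risk undercounting, and your ``consistency check'' against the single geometric class would not detect the error since any wrong count is equally consistent with having one geometric class.
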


\begin{theo}\label{MainTheorem2}
There exist precisely $30$ nonhyperelliptic superspecial curves of genus $4$ over $\mathbb{F}_{11}$ up to isomorphism over $\mathbb{F}_{11}$.
Moreover, there exist precisely $9$ nonhyperelliptic superspecial curves of genus $4$ over $\mathbb{F}_{11}$ up to isomorphism over the algebraic closure.
\end{theo}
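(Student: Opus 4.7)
The plan is to run the same algorithmic strategy as in \cite{KH16}, but over the prime field $\mathbb{F}_{11}$, and then to reconcile the count with isomorphisms over $\overline{\mathbb{F}_{11}}$. First I would use the canonical embedding: any nonhyperelliptic curve $C$ of genus $4$ sits in $\mathbb{P}^3$ as the complete intersection $Q \cap F$ of an irreducible quadric $Q$ and a cubic $F$, where $Q$ is uniquely determined by $C$ up to a scalar. Since $p=11\ne 2$, $Q$ is $\mathrm{PGL}_4(\mathbb{F}_{11})$-equivalent to a normal form of rank $3$ or $4$, and the two cases must be treated separately (isomorphism classes of such normal forms over $\mathbb{F}_{11}$ are governed by whether a relevant discriminant is a square, so this step is a small finite enumeration).

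Next, for each normal form of $Q$, I would parametrize cubics $F$ modulo the ideal $(Q)$, giving an affine space of coefficients; the superspeciality condition is that the Hasse--Witt matrix $H(C) \in M_4(\mathbb{F}_{11})$ vanishes. Following the approach in \cite{KH16}, $H(C)$ can be read off from the coefficient of $(x_1x_2x_3x_4)^{p-1}$ in $(QF)^{p-1}$ (more precisely, from the coefficients selecting the appropriate monomials representing a basis of $H^1(C,\mathcal{O}_C)$). This gives a system of up to $16$ polynomial equations in the coefficients of $F$ of degree $2(p-1)=20$, and one must compute a Gr\"obner basis (or equivalent) over $\mathbb{F}_{11}$ and enumerate the $\mathbb{F}_{11}$-rational solutions.

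The resulting finite set of pairs $(Q,F)$ overcounts isomorphism classes by the action of the stabilizer $\mathrm{Stab}(Q)\subset \mathrm{PGL}_4(\mathbb{F}_{11})$ on cubics modulo $Q$; I would quotient by this action (together with the obvious scaling of $F$ and the freedom $F\mapsto F+LQ$ for $L$ linear) and check smoothness and irreducibility of each resulting $C$. After discarding any hyperelliptic or singular intersections, the count $30$ is the number of $\mathbb{F}_{11}$-isomorphism classes. To pass to isomorphism over $\overline{\mathbb{F}_{11}}$, I would compute, for a set of representatives, their $j$-invariant-type data (equivalently, the $\mathrm{PGL}_4(\overline{\mathbb{F}_{11}})$-orbit representative), or, more conceptually, use Proposition \ref{PropositionGeneralFiniteFields} in the excerpt: the number of $\overline{\mathbb{F}_{11}}$-isomorphism classes is read off from how the $30$ $\mathbb{F}_{11}$-classes merge under twists, and this is expected to collapse to $9$.

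The main obstacle is computational: setting up and solving the Hasse--Witt equations over $\mathbb{F}_{11}$ in up to $\sim 16$ variables of degree $20$ is close to the limit of what Gr\"obner basis computation can handle directly, and a naive implementation will not terminate. I would expect to need the tricks from \cite{KH16}: split on $Q$ first, use the automorphism group of $Q$ aggressively to put $F$ in a sparse normal form before computing $H(C)$, and work variable-by-variable whenever a triangular structure appears. A secondary obstacle is certifying completeness --- one must verify that no smooth superspecial $C$ has been missed because of, e.g., a cubic $F$ representing the same $C$ but lying outside the chosen slice modulo $(Q)$; this is handled by a careful isomorphism test at the end. Once these technical issues are resolved, the $\overline{\mathbb{F}_{11}}$-count of $9$ follows as a bookkeeping consequence of grouping the $30$ $\mathbb{F}_{11}$-classes into Frobenius-twist families.
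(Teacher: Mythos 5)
Your overall strategy is the same as the paper's: put $C$ into the canonical model $V(P,Q)\subset\mathbb{P}^3$, normalize $Q$ (three cases here, not two — rank 4 splits into the two discriminant types, plus the rank-3 degenerate case, giving (N1), (N2), (Dege)), parametrize $P$ modulo $Q$, impose the vanishing of the $16$ Hasse--Witt coefficients of $(PQ)^{p-1}$, solve with Gr\"obner bases, and then deduplicate via the action of the orthogonal similitude group $\tilde{\mathrm{O}}_\varphi(\mathbb{F}_{11})$ (for the $\mathbb{F}_{11}$ count of $30$) and over $\overline{\mathbb{F}_{11}}$ (for the count of $9$).

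However, there is a real gap in the step you call ``resolve these technical issues.'' You explicitly acknowledge that a direct Gr\"obner attack does not terminate and propose to use ``the tricks from \cite{KH16}''; the difficulty is that \emph{those tricks alone do not reach $p=11$}, which is precisely why the present paper exists. Two ingredients in the paper are new relative to \cite{KH16} and are the ones that make the computation finish. First, the reduction of the cubic form $P$ (Lemmas \ref{NewReductionLemmaN1}, \ref{ReductionLemmaN2}, \ref{ReductionLemmaDegenerate}): since a superspecial curve has $p$-rank $0$, Lemma \ref{RationalPointOnSSp} guarantees an $\mathbb{F}_q$-rational point for \emph{every} such curve, which is used to kill the $x^3$-coefficient of $P$ and force $P$ into a much sparser normal form than \cite{KH16} could use (where the reduction needed ``sufficiently many'' rational points). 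Your suggestion of simply ``using the automorphism group of $Q$ aggressively'' does not by itself produce this reduction. Second, the ``double hybrid method'': the paper deliberately chooses a \emph{larger} number $s_1$ of indeterminate coefficients when computing $(PQ)^{p-1}$ than the number $s_2$ used in the Gr\"obner step, so that the expensive multiplication $(PQ)^{p-1}$ is performed $q^{t-s_1}$ times instead of $q^{t-s_2}$ times, with the extra $s_1-s_2$ coefficients being specialized only afterwards. This cost tradeoff is what turns a years-long computation into tens of hours (Table \ref{timing}); it is not the ``variable-by-variable triangular'' idea you gesture at.

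Finally, your suggestion to obtain the $\overline{\mathbb{F}_{11}}$ count of $9$ ``conceptually'' from Proposition \ref{PropositionGeneralFiniteFields} will not work: that proposition says $\#\SSp_g(\mathbb{F}_{p^a})$ depends only on the parity of $a$, but it does not compute the size of $\SSp_g$ over the algebraic closure. In the paper the passage from $30$ to $9$ is itself a computation (Proposition \ref{prop:for_cor}): one first moves the (N2) representatives to the (N1) quadric by an explicit matrix over a quadratic extension, and then runs the isomorphism-testing algorithm over $\overline{\mathbb{F}_{11}}$ on the combined list. You should replace the appeal to Proposition \ref{PropositionGeneralFiniteFields} with that explicit merging step.
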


We also have explicit defining equations of the superspecial curves in Theorems \ref{MainTheorem} and \ref{MainTheorem2} (but omit them in the statement).
Many of them
define maximal curves over $\mathbb{F}_{p^2}$.
For example, we found the following superspecial curve over $\mathbb{F}_{11}$;
Let $Q= 2 x w + 2 y z$, and $P= x^2 y + x^2 z + y^3 + 8 y^2 z + 3 y z^2 + 10 y w^2 + 10 z^3 + 10 z w^2$,
which define one of the $30$ superspecial curves over $\mathbb{F}_{11}$.
Then $C = V(P, Q)$ is a maximal curve over $\mathbb{F}_{11^2}$.
Indeed, the number of its $\mathbb{F}_{11^2}$-rational points is $210$, which coincides with the Hasse-Weil upper bound $q +1 + 2 g \sqrt{q} $ for $q=11^2$.
For the other equations, see Sections \ref{subsec:sscurves} and \ref{subsec:comp_result}, or a table of the web page of the first author~\cite{HPkudo}.

We prove Main Theorem with help of computational results.
The idea of our enumeration method in this paper is based on \cite{KH16}, but an improvement is required:
In \cite[Section 5.2]{KH16}, the authors gave an algorithm (Main Algorithm together with a pseudocode in \cite[Algorithm 5.2.1]{KH16}) to enumerate nonhyperelliptic superspecial curves of genus $4$.
As showed in \cite{KH16}, a nonhyperelliptic curve $C$ of genus $4$ over $K$ is given by an irreducible quadratic form $Q$ and an irreducible cubic form $P$ in $K [x,y,z,w]$.
Regarding coefficients in $P$ as indeterminates, one computes $(P Q)^{p-1}$, and then a multivariate system over $K$ is derived from our criterion for the superspeciality (for details on the criterion, see \cite[Section 3.1]{KH16} or Section 2.1 of this paper).
Considering a tradeoff between a brute-force and Gr\"{o}bner bases techniques, we solve the system with the {\it hybrid method} given in \cite{BFP}.
Here the hybrid method is a method for solving multivariate systems by combining the brute-force on some coefficients with Gr\"{o}bner bases techniques.
For each solution, we test whether $C=V (P, Q)$ is non-singular or not.
In this way, one can enumerate all nonhyperelliptic superspecial curves of genus $4$ over $K$, but an improvement is required to get the result for $q=p=11$ since $p=11$ is not so small.

In this paper, we shall give a modified version of the algorithm in \cite{KH16}.
We here briefly describe the difference between the previous algorithm (Main Algorithm in \cite{KH16})
and the modified version.
Our modification considers optimal coefficients in $P$ to be regarded as indeterminates not only in solving algebraic equations but also in computing the multiplication $(P Q)^{p-1}$.
More concretely,
in the previous version, we first choose and fix the number of the indeterminates in solving multivariate systems.
In other words, we use the {\it same} number of indeterminates in computing $(P Q)^{p-1}$ and solving multivariate systems.
From outputs obtained by the previous algorithm in our experiments,
we observe that the computation of $(P Q)^{p-1}$ might be dominant for large $p$ if each multivariate system is quite efficiently solved.
This depends on the value of $p$, rather than the number of unknown coefficinets in $P$ to be regarded as indeterminates.
From this, we consider increasing the number of the indeterminates in the computation of $(P Q)^{p-1}$, but not changing (or reducing) that
in solving multivariate systems.
In other words, we use {\it different} number of indeterminates in computing $(P Q)^{p-1}$ and solving multivariate systems.
As described above, we consider two kinds of optimal tuples of coefficients in $P$ to be regarded as indeterminates, and doubly use the brute-force on coefficients.
Following the terminology in \cite{BFP},
we call this method {\it double hybrid method} in this paper.
As we will see in this paper,
increasing the number of the indeterminates in the computation of $(P Q)^{p-1}$ allows us to reduce the number of total iterations.
We therefore expect that the modified version with this double hybrid method is extremely faster than the previous version in \cite{KH16} for certain cases.

We also give an algorithm to classify isomorphism classes of superspecial curves of genus $4$, based on the Bruhat decomposition of the orthogonal group associated to the quadratic form $Q$ (cf.\ the algorithm given in \cite{KH16} just computes defining equations, but does not classify isomorphism classes).
With these new algorithms, we completely enumerate the isomorphism classes of superspecial curves of genus $4$ over $\mathbb{F}_5$ and $\mathbb{F}_{11}$.

The automorphism groups of the superspecial curves
 obtained in Theorems A and B,
and the compatibility of this enumeration
and Galois cohomology theory
 will be studied in a separated paper \cite{KHS}.

The structure of this paper is as follows.
In Section 2, we review some basic facts on
nonhyperelliptic curves of genus $4$ and 
a way to compute those Hasse-Witt matrices,
and study the reduction of the enumeration of superspecial curves over an arbitrary finite field to that in the case of degree one or two.
In Section 3 we give a reduction of the defining equations of curves
of genus $4$, refining the way in \cite[Section 4]{KH16}.
In \cite{KH16} we treated only curves with sufficiently many rational points,
but over small fields curves may not have sufficiently many rational points
even if they are maximal.
The reduction in this paper assumes only
that a curve has at least one rational point.
In addition, as $\F_5$ is very small, we need an extra argument over $\F_5$, see Section \ref{ReductionDegenerateCase}.
In Section 4, we state the main results and prove them.
In Appendix we collect the pseudocodes used in Section 4.

\subsection*{Acknowledgments}
This work was supported by
JSPS Grant-in-Aid for Young Scientists (B) 25800008.

\section{Preliminaries}\label{section:2}

We review some basic facts on nonhyperelliptic curves of genus $4$, and a criterion for their superspecialities and non-singularities.

\subsection{Nonhyperelliptic curves of genus $4$ and their superspecialities}\label{CurvesGenus4}
Let $K$ be a perfect field of characteristic $p$, and $C$ a nonhyperelliptic curve of genus $4$ over $K$.
As a canonical curve, $C$ is defined in the $3$-projective space $\bbP^3=\Proj(\overline{K}[x,y,z,w])$ by an irreducible quadratic form $Q$ and an irreducible cubic form $P$ in $x,y,z,w$, see \cite[Chapter IV, Example 5.2.2]{Har}.
As showed in \cite[Section 2.1]{KH16}, we may assume that any coefficient of $Q$ and $P$ belongs to $K$.

It is known that $C$ is superspecial if and only if its Hasse-Witt matrix, which is the matrix of the Frobenius on $H^1 ( C, \mathcal{O}_C)$ for a suitable basis, is zero.
The Hasse-Witt matrix of $C$ is determined by certain coefficients of $(P Q)^{p-1}$, see \cite[Corollary 3.1.6]{KH16} (for more general cases, see \cite[Appendix B]{KH16} or \cite[Section 5]{Kudo}).
Hence we can decide whether $C$ is superspecial or not by computing the coefficients.
We state this fact in Proposition \ref{cor:HW}.

\begin{prop}[\cite{KH16}, Corollary 3.1.6]\label{cor:HW}
With notation as above, $C=V(P,Q)$ is superspecial if and only if all the coefficients of the following monomials in $(P Q)^{p-1}$ are zero:
\begin{equation}
\begin{array}{cccc}
( x^2 y z w )^{p-1}, & x^{2 p-1} y^{p-2} z^{p-1} w^{p-1}, & x^{2 p-1} y^{p-1} z^{p - 2} w^{p -1}, &  x^{2 p -1} y^{p-1} z^{p - 1} w^{p -2}, \\
x^{p-2} y^{2 p-1} z^{p-1} w^{p-1}, & ( x y^2 z w )^{p-1} , & x^{p-1} y^{2 p-1} z^{p - 2} w^{p -1}, &  x^{p -1} y^{2 p-1} z^{p - 1} w^{p -2}, \\
x^{p-2} y^{p-1} z^{2 p - 1} w^{p -1}, & x^{p-1} y^{p-2} z^{2 p-1} w^{p-1}, & ( x y z^2 w )^{p-1} , &  x^{p -1} y^{p-1} z^{2 p - 1} w^{p -2}, \\
 x^{p -2} y^{p-1} z^{p - 1} w^{2 p -1}, & x^{p-1} y^{p-2} z^{p-1} w^{2 p-1}, & x^{p-1} y^{p-1} z^{p - 2} w^{2 p -1}, & ( x y z w^2 )^{p-1}
\end{array} \nonumber
\end{equation}
\end{prop}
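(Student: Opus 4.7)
The plan is to compute the Hasse--Witt matrix of $C$ in a natural basis of $H^1(C,\mathcal{O}_C)$ coming from \v{C}ech cohomology, and identify its sixteen entries with the coefficients of the sixteen listed monomials in $(PQ)^{p-1}$.

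First, I would use the Koszul resolution
\[
0 \to \mathcal{O}_{\mathbb{P}^3}(-5) \to \mathcal{O}_{\mathbb{P}^3}(-2) \oplus \mathcal{O}_{\mathbb{P}^3}(-3) \to \mathcal{O}_{\mathbb{P}^3} \to \mathcal{O}_C \to 0,
\]
split it into two short exact sequences, and apply the standard vanishing $H^i(\mathbb{P}^3, \mathcal{O}(k)) = 0$ for $i = 1, 2$ and all $k$, together with $H^3(\mathbb{P}^3, \mathcal{O}(k)) = 0$ for $k \ge -3$. Chasing the resulting long exact sequences produces a canonical isomorphism
\[
H^1(C, \mathcal{O}_C) \;\cong\; H^3(\mathbb{P}^3, \mathcal{O}(-5)),
\]
and the right-hand side has a natural \v{C}ech basis, computed on the standard affine cover of $\mathbb{P}^3$, consisting of the four Laurent monomials $1/(x^2 yzw)$, $1/(xy^2zw)$, $1/(xyz^2w)$, $1/(xyzw^2)$, in agreement with $g = 4$.

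Next I would trace the absolute Frobenius $s \mapsto s^p$ on $\mathcal{O}_C$ through the two connecting homomorphisms arising from the Koszul resolution. The key observation, standard for complete intersections in projective space, is that under the identification above the induced Hasse--Witt operator sends a \v{C}ech class $\eta \in H^3(\mathbb{P}^3, \mathcal{O}(-5))$ to the Laurent part of $(PQ)^{p-1}\cdot \eta^p$ supported on monomials $x^{-a}y^{-b}z^{-c}w^{-d}$ with $a,b,c,d \ge 1$ and $a+b+c+d = 5$; the degrees match because $(PQ)^{p-1}$ has degree $5(p-1)$ and $\eta^p$ has degree $-5p$. Indexing the four basis vectors by $\alpha \in \{(2,1,1,1),(1,2,1,1),(1,1,2,1),(1,1,1,2)\}$, the $(\alpha,\beta)$-entry of the Hasse--Witt matrix is then exactly the coefficient of
\[
x^{p\alpha_1-\beta_1}\, y^{p\alpha_2-\beta_2}\, z^{p\alpha_3-\beta_3}\, w^{p\alpha_4-\beta_4}
\]
in $(PQ)^{p-1}$.

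Finally, I would enumerate the sixteen pairs $(\alpha,\beta)$ and verify that the resulting monomials are precisely the ones listed in the proposition: for instance $(\alpha,\beta) = ((2,1,1,1),(2,1,1,1))$ gives $(x^2 yzw)^{p-1}$, while $(\alpha,\beta) = ((2,1,1,1),(1,2,1,1))$ gives $x^{2p-1}y^{p-2}z^{p-1}w^{p-1}$, and so on through the full $4\times 4$ grid. Since $C$ is superspecial if and only if its Hasse--Witt matrix vanishes, the stated equivalence follows. I expect the main delicate point to be the passage from the Frobenius on $\mathcal{O}_C$ to the explicit multiplication-by-$(PQ)^{p-1}$ formula on $H^3(\mathbb{P}^3, \mathcal{O}(-5))$ through the two connecting maps (with the associated bookkeeping of signs and of the direction of the Koszul differentials); once that formula is in hand, the rest is a purely combinatorial check of sixteen exponent vectors.
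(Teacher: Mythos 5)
The paper does not actually prove this proposition; it imports it verbatim from \cite[Corollary 3.1.6]{KH16} (and points readers to \cite[Appendix~B]{KH16} and \cite[Section~5]{Kudo} for general versions). So there is no in-paper proof to compare against line by line, but your outline is exactly the standard derivation that underlies the cited result, and it is correct in its essentials. The Koszul resolution
\[
0 \to \mathcal{O}_{\mathbb{P}^3}(-5) \to \mathcal{O}_{\mathbb{P}^3}(-2)\oplus\mathcal{O}_{\mathbb{P}^3}(-3) \to \mathcal{O}_{\mathbb{P}^3} \to \mathcal{O}_C \to 0
\]
together with the vanishing of the intermediate cohomology of line bundles on $\mathbb{P}^3$ does give the isomorphism $H^1(C,\mathcal{O}_C)\cong H^3(\mathbb{P}^3,\mathcal{O}(-5))$, and the four negative Laurent monomials you list form its \v{C}ech basis (consistent with $g=4$). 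Your degree bookkeeping is right: $(PQ)^{p-1}$ has degree $5(p-1)$, matching $5p-5$. One small bookkeeping slip: with $\eta_\beta=x^{-\beta}$, the $\alpha$-component of $F(\eta_\beta)$ is the coefficient of $x^{p\beta-\alpha}$ in $(PQ)^{p-1}$, so your $(\alpha,\beta)$-entry formula is transposed; but since the proposition only asserts vanishing of all sixteen coefficients over the full $\{\,\alpha\,\}\times\{\,\beta\,\}$ grid, the set of monomials is unchanged and the conclusion is unaffected.

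The one genuine gap — which you yourself flag as ``the main delicate point'' — is the passage from the absolute Frobenius $s\mapsto s^p$ on $\mathcal{O}_C$ to the explicit operator ``multiply by $(PQ)^{p-1}$ and project'' on $H^3(\mathbb{P}^3,\mathcal{O}(-5))$. This is exactly the content of the theorem being quoted; everything else in your argument is routine. To close it one needs to construct a map of complexes lifting Frobenius on the Koszul resolution — i.e.\ compare the resolution twisted by $p$ (with maps $P^p, Q^p$) against the original (with maps $P, Q$) via the commutative squares furnished by multiplication by $P^{p-1}$, $Q^{p-1}$, and $(PQ)^{p-1}$ in the appropriate slots — and then check the sign and direction conventions through the two connecting homomorphisms. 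Until that diagram chase is done, the claim that the Hasse--Witt operator ``sends $\eta$ to the Laurent part of $(PQ)^{p-1}\eta^p$'' is an assertion rather than a proof, even though it is the correct assertion. If you are asked to supply a complete argument, that is the piece to flesh out; it is the same computation carried out (in greater generality) in the references the paper cites.
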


\subsection{Non-singularity Testing}\label{subsec:singtest}

Let $K$ be a field and $\overline{K}$ its algebraic closure.
Note that $K$ is not necessarily perfect nor of positive characteristic.
Let $f_1, \ldots , f_t $ be non-constant homogeneous polynomials in $S:=K [ X_0, \ldots , X_r]$.
We denote by $V (f_1, \ldots , f_t)$ the locus in $= \mathrm{Proj} (\overline{K}[X_0,\ldots , X_r])$ of the zeros of $f_1, \ldots , f_t$.
Given $f_1, \ldots , f_t$, we can decide whether $V ( f_1, \ldots , f_t )$ is non-singular or not.
The following is a known fact in computational algebraic geometry.


\begin{lem}[\cite{KH16}, Lemma 3.2.1]\label{lem:non-sing}
With notation as above, let $f_1, \ldots , f_t$ be (non-constant) homogeneous polynomials in $S = K [X_0, \ldots , X_r]$.
We denote by $J (f_1, \ldots , f_t )$ the set of all the minors of degree $r - \mathrm{dim} (V (f_1, \ldots , f_t))$ of the matrix $( \partial f_i / \partial X_j)_{i,j}$.
Then the following are equivalent:
\begin{enumerate}
	\item[{\rm (1)}] The variety $V ( f_1, \ldots , f_t )$ is non-singular.
	\item[{\rm (2)}] For each $0 \leq i \leq r$,
	\[
	1 \in \langle J ( f_1, \ldots , f_t ), f_1, \ldots , f_t, 1 - Y X_i  \rangle_{K [ X_0, \ldots , X_r, Y]},
	\]
	where $Y$ is an extra indeterminate.
\end{enumerate}
\end{lem}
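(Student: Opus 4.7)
The plan is to combine the projective Jacobian criterion with the Rabinowitsch trick. Covering $\mathbb{P}^r = \Proj(\overline K[X_0,\ldots,X_r])$ by the standard affine opens $U_i = \{X_i \neq 0\}$, $V := V(f_1,\ldots,f_t)$ is non-singular iff each affine piece $V \cap U_i$ is non-singular; this is precisely the outer ``for each $0 \le i \le r$'' quantifier in condition (2). Smoothness is tested on $\overline K$-points, and the condition $1 \in I$ is insensitive to base change, so I may freely pass between $K$ and $\overline K$ throughout.

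Next, on the chart $U_i \cong \mathbb{A}^r_{\overline K}$, the affine Jacobian criterion says $V \cap U_i$ is non-singular iff the Jacobian matrix $(\partial f_j/\partial X_k)_{j,k}$ has rank at least $c := r - \dim V$ at every point of $V \cap U_i$. A short use of the Euler relation $\sum_k X_k \partial f_j/\partial X_k = (\deg f_j)\,f_j$ shows that on $V \cap U_i$ (where $X_i \ne 0$) the rank of the full $(r+1)$-column Jacobian coincides with that of its dehomogenized $r$-column submatrix, so it is legitimate to use the $c \times c$ minors $J(f_1,\ldots,f_t)$ of the full matrix. Thus the singular locus in $U_i$ is the common zero set of $J(f_1,\ldots,f_t) \cup \{f_1,\ldots,f_t\}$ on $U_i$, and smoothness of $V$ on $U_i$ amounts to emptiness of this subscheme.

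Finally, by the Rabinowitsch trick, an affine subscheme of $U_i = \Spec \overline K[X_0,\ldots,X_r][1/X_i]$ cut out by $g_1,\ldots,g_s$ is empty iff
$$1 \in \langle g_1,\ldots,g_s,\, 1 - YX_i\rangle_{\overline K[X_0,\ldots,X_r,Y]};$$
this is the standard consequence of Hilbert's Nullstellensatz, in which the auxiliary generator $1 - YX_i$ algebraically enforces $X_i \ne 0$. Since membership of $1$ in an ideal descends between $K$ and $\overline K$, the same relation holds over $K$. Applying this with the $g_\bullet$ taken to be $J(f_1,\ldots,f_t)$ together with $f_1,\ldots,f_t$ recovers condition (2) verbatim, and concatenating the three translations gives the equivalence (1) $\Leftrightarrow$ (2).

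The main conceptual point to be careful about is the use of minor degree $c = r - \dim V$ rather than, say, $t$ or $r - t$; this is what the Jacobian criterion demands even when $V$ fails to be a complete intersection (so possibly $c < t$). Strictly, if $V$ is not equidimensional one should interpret $\dim V$ componentwise and apply the argument to each component separately, but since non-singularity is a local property this creates no real difficulty. All remaining steps are mechanical unwindings of the Nullstellensatz and the affine cover, requiring no substantive computation.
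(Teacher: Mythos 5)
The paper records this as a known fact, citing \cite[Lemma~3.2.1]{KH16}, and gives no proof of its own; so your argument has to be judged on its own merits. The route you take is the expected and correct one: cover $\mathbb{P}^r$ by the charts $U_i$, use the Euler relation to identify the rank of the full $(r+1)$-column Jacobian with that of its dehomogenized $r$-column submatrix on $V\cap U_i$, invoke the affine Jacobian criterion, and turn emptiness of the singular locus into ideal membership by the Rabinowitsch trick, noting that $1\in I$ is insensitive to the base change $K\to\overline K$.

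Where I would push back is step~2 together with your closing caveat. You assert that $V\cap U_i$ is non-singular iff the Jacobian has rank $\geq c = r - \dim V$ at every point; this is false when $V$ is not equidimensional, and so is the lemma as stated. The Jacobian criterion requires rank equal to the \emph{local} codimension $r-\dim_P V$, which may strictly exceed $c$ on a lower-dimensional component. Concretely (affine picture for brevity): take $f_1=z(z-1)$, $f_2=z(y^2-x^3)$ in $K[x,y,z]$. Then $V$ is the union of the plane $\{z=0\}$ and the cuspidal cubic $\{z=1,\ y^2=x^3\}$, so $\dim V=2$ and $c=1$; at every point of $V$ the Jacobian entry $2z-1$ equals $\pm1\neq 0$, so condition~(2) is satisfied, yet $V$ has a cusp at $(0,0,1)$. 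Your remark that one should ``apply the argument to each component separately'' produces a \emph{different} criterion with different minor sizes on different components, not the lemma's single~$c$; it does not rescue the statement and ``no real difficulty'' is too quick. The honest fix is to add an equidimensionality hypothesis (it holds automatically for the complete intersection $V(P,Q)\subset\mathbb{P}^3$ which is the only input the paper feeds in), or equivalently to observe that when every $P\in V$ has $\dim_P V=\dim V$ one indeed gets $c\leq \operatorname{rank}_P\leq r-\dim_P V=c$, forcing equality and hence regularity.
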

With this criterion, one can test the non-singularity of $V ( f_1, \ldots , f_t )$ by computing a Gr\"{o}bner basis for $\langle J ( f_1, \ldots , f_t ), f_1, \ldots , f_t, 1 - Y X_i  \rangle_{K [ X_0, \ldots , X_r, Y]}$.

\subsection{Enumerating superspecial curves over general finite fields}\label{subsec:GeneralFiniteFields}
Let $K$ be an arbitrary finite field of characteristic $p$.
We reduce the enumeration of $K$-isomorphism classes of superspecial curves over $K$ to that of $\F$-isomorphism classes of superspecial curves over $\F$ for $\F = \F_p$ or $\F_{p^2}$.

\newcommand{\SSp}{{\operatorname{SSp}}}
Let $\SSp_g(K)$ denote the set of $K$-isomorphism classes of superspecial curves over $K$.
The next proposition gives the reduction above.

\begin{prop}\label{PropositionGeneralFiniteFields}
Assume $g\ge 2$.
There exists a bijection
between $\SSp_g(\F_{p^a})$ and $\SSp_g(\F_{p^b})$ if $a\equiv b\ (\modulo 2)$.
\end{prop}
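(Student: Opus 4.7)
The plan is to realize both $\SSp_g(\F_{p^a})$ and $\SSp_g(\F_{p^b})$ as the same combinatorial data (a Galois-fixed index set together with a non-abelian $H^1$-fiber over each index) and to verify that this data depends only on $a\bmod 2$. First I would establish two geometric facts. (Fact A) Every superspecial curve over $\overline{\F_p}$ admits a model over $\F_{p^2}$; this follows from Torelli combined with the observation that every supersingular elliptic curve is defined over $\F_{p^2}$, hence so is the product $E^g$ together with any principal polarization on it, and hence so is the curve itself (with the hyperelliptic involution handled separately). (Fact B) For a superspecial curve $C$ defined over $\F_{p^2}$, the natural inclusion $\Aut_{\F_{p^2}}(C)\hookrightarrow \Aut_{\overline{\F_p}}(C_{\overline{\F_p}})$ is an equality; this is because automorphisms of $C$ embed into endomorphisms of $\Jac(C)\cong E^g$, and all endomorphisms of a supersingular elliptic curve — hence of any power of one — are defined over $\F_{p^2}$.

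Given these facts, I would apply standard Galois descent: for a superspecial curve $C_0$ defined over $\F_{p^2}$, the set of $\F_{p^a}$-forms of $C_0$ is canonically a torsor under $H^1(\Gal(\overline{\F_p}/\F_{p^a}),\Aut(C_{0,\overline{\F_p}}))$, and the obstruction to descending any Galois-fixed $\overline{\F_p}$-class vanishes because $\Gal(\overline{\F_p}/\F_{p^a})\cong\hat{\Z}$ has cohomological dimension one. Therefore $\SSp_g(\F_{p^a})$ is in natural bijection with
\[
\bigsqcup_{[C_0]} H^1\!\bigl(\Gal(\overline{\F_p}/\F_{p^a}),\,\Aut(C_{0,\overline{\F_p}})\bigr),
\]
where the disjoint union is taken over the $\Gal(\overline{\F_p}/\F_{p^a})$-fixed classes in $\SSp_g(\overline{\F_p})$.

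By Facts A and B, the absolute Galois group acts on $\SSp_g(\overline{\F_p})$ and on every $\Aut(C_{0,\overline{\F_p}})$ through the quotient $\Gal(\F_{p^2}/\F_p)\cong\Z/2\Z$. Consequently the restriction of this action to the subgroup $\Gal(\overline{\F_p}/\F_{p^a})$ depends only on $a\bmod 2$: it is trivial when $a$ is even and surjects onto $\Z/2\Z$ when $a$ is odd. Both the index set of fixed classes and each cohomology fiber above are determined by this action, so a bijection $\SSp_g(\F_{p^a})\leftrightarrow\SSp_g(\F_{p^b})$ follows as soon as $a\equiv b\pmod 2$.

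The principal obstacle I expect is Fact B, i.e., showing that every $\overline{\F_p}$-automorphism of a superspecial curve defined over $\F_{p^2}$ is already defined over $\F_{p^2}$. This hinges on the Tate–Deuring theorem that endomorphisms of supersingular elliptic curves are $\F_{p^2}$-rational, combined with the functoriality of $\Jac$ and Torelli; the hypothesis $g\ge 2$ is needed here to ensure $\Aut(C)$ is finite (by Hurwitz) so that the non-abelian $H^1$ formalism behaves well. A minor secondary point is making the bijection reasonably canonical rather than only a numerical equality, which is achieved by fixing a choice of representative in each $\Gal(\F_{p^2}/\F_p)$-orbit of $\overline{\F_p}$-classes.
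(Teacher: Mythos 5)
Your overall strategy is genuinely different from the paper's: you decompose $\SSp_g(\F_{p^a})$ as a disjoint union of non-abelian $H^1$'s over $\Gamma_{p^a}$-fixed $\overline{\F_p}$-classes and then argue that all the Galois actions involved factor through $\Gal(\F_{p^2}/\F_p)$, whereas the paper never invokes cohomology: it works directly with descent data, fixes one isomorphism $\varphi_2\colon C^{(p^2)}\to C$ coming from the $\F_{p^2}$-model of $C$ (quoting Ekedahl for its existence), and sends $\varphi_a\mapsto \varphi_a\circ\varphi_2^{(p^a)}$ to shift the degree by two. Your route is heavier but conceptually clean; the trade is that it forces you to control the Galois action on $\Aut$, which is exactly where your argument has a gap.

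The gap is in Fact B and its justification. You claim that for \emph{any} superspecial curve $C_0$ defined over $\F_{p^2}$ one has $\Aut_{\F_{p^2}}(C_0)=\Aut_{\overline{\F_p}}(C_{0,\overline{\F_p}})$, with the reason being that all endomorphisms of a supersingular elliptic curve, hence of $E^g$, are $\F_{p^2}$-rational. That last assertion depends on the model. For a supersingular elliptic curve $E$ over $\F_{p^2}$ the $\F_{p^2}$-endomorphism ring is the full quaternion order only when the $p^2$-Frobenius is central, i.e.\ $\pi=\pm p$ (trace $\pm 2p$); for the other supersingular traces over $\F_{p^2}$ the $\F_{p^2}$-rational endomorphism ring is merely an order in an imaginary quadratic field. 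So from ``$\Jac(C_0)\otimes\overline{\F_p}\cong E^g$'' you cannot conclude that $\End_{\F_{p^2}}(\Jac C_0)$ is the full $\End(E^g)$, and conjugation by the Frobenius of $\Jac C_0$ need not act trivially on $\Aut(C_{0,\overline{\F_p}})$. What is true (and is what Ekedahl actually proves) is that there exists a \emph{distinguished} $\F_{p^2}$-model with this maximality property; but your argument needs it for an arbitrary $\F_{p^2}$-model, and, in the odd-degree branch, even for the $\F_{p^2}$-base change of an arbitrary $\F_p$-model, which is a further unproved step. As it stands, the claim that the Galois action on every $\Aut(C_{0,\overline{\F_p}})$ factors through $\Gal(\F_{p^2}/\F_p)$ is not established, and this is the load-bearing step in your argument. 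The paper's proof sidesteps the entire issue by composing descent data with a fixed $\varphi_2$ rather than analyzing the Galois module structure of $\Aut$.
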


This is an analogue of the result by Xue, Yang and Yu in the case of abelian varieties, see \cite{XYY16}, Theorem 1.3.

\newcommand{\id}{{\operatorname{id}}}
To prove this proposition, we recall a basic fact on the Galois descent theory.
Put $k:=\overline{\F_{p}}$.
Let $\sigma_q$ denote the $q$-th power map on $k$.
Set $\Gamma_q = \Gal(k/\F_q)$.
Note that $\sigma_q$ is a topological generator of $\Gamma_q$.
For a scheme $S$ over $k$,
let $S^{(q)}$ denote $S\otimes_{k,\sigma_q}  k$.
For a morphism $f: S\to T$ of schemes over $k$,
let $f^{(q)}$ denote its base change $S^{(q)} \to T^{(q)}$.
Let $X$ be a quasi-projective variety over $k$.
Assume $|\Aut(X)|<\infty$.
We claim that any isomorphism $\varphi: X^{(q)} \to X$
defines a descent datum.
Let $\varphi_{\sigma_q^i}$ be the isomorphism $X^{(q^i)} \to X$
defined by
$\varphi_{\sigma_q^i} = \varphi\circ\varphi^{(q)}\circ\cdots \circ \varphi^{(q^{i-1})}$.
The cocycle condition $\varphi_{\sigma_q^i\sigma_q^j}
= \varphi_{\sigma_q^i}\circ\varphi_{\sigma_q^j}^{(q^i)}$
is obviously satisfied.
Let $\F_{q^m}$ be a field over which
$X$ and every automorphism of $X$ are defined.
Then $\varphi_{\sigma_q^m} \in \Aut(X)$
and $\varphi_{\sigma_q^m}^{(q^{mj})} = \varphi_{\sigma_q^m}$
for all $j$.
By the assumption $|\Aut(X)|<\infty$,
there exists a natural number $\ell$ such that $(\varphi_{\sigma_q^m})^\ell$
is the identity map $\id_X$ on $X$.
Then we have $\varphi_{\sigma_q^{m\ell}} = \id_X$.
Hence $\{\varphi_{\sigma_q^i}\}$ is a descent datum.
As $X$ is quasi-projective, any descent datum is known to be effective.
In the above setting, for any isomorphism $\varphi: X^{(q)} \simeq X$,
there exist a variety $X_0$ over $\F_q$
and an isomorphism  $\iota: X \to X_0\otimes k$ such that
$\varphi$ factors as $\iota^{-1}\circ\iota^{(q)}: X^{(q)} \to (X_0\otimes k)^{(q)}=X_0\otimes k \to X$.

Now we prove Proposition \ref{PropositionGeneralFiniteFields}.

\begin{proof}[Proof of Proposition \ref{PropositionGeneralFiniteFields}]
Let $C$ be a superspecial curve over $k$ of genus $g$.
Let $\SSp_C(\F_{p^a})$ be the set of
$\F_{p^a}$-isomorphism classes of superspecial curves $C'$ over $\F_{p^a}$
such that $C'\otimes_{\F_{p^a}} k \simeq C$.
It suffices to construct a bijection from $\SSp_C(\F_{p^a})$ and $\SSp_C(\F_{p^{a+2}})$.


It suffices to construct a bijection from the set of descent data of $C$ with respect to $k/\F_{p^a}$
to that with respect to $k/\F_{p^{a+2}}$.
Since $|\Aut(C)|<\infty$, it is enough to give
a bijection from the set of isomorphisms $C^{(p^a)} \to C$
to that of isomorphisms $C^{(p^{a+2})} \to C$.
It is well-kwown that $C$ is defined over $\F_{p^2}$,
see the proof of \cite{Ekedahl}, Theorem 1.1.
Hence there exists an isomorphism  $\varphi_2: C^{(p^2)}\simeq C$.
Let $\varphi_a: C^{(p^a)} \to C$ be an isomorphism.
To $\varphi_a$ we associate an isomorphism
$\varphi_{a+2}: C^{(p^{a+2})} \to C$ by
$\varphi_{a+2} =  \varphi_a\circ\varphi_2^{(p^a)}$.
This clearly gives a desired bijection.
\end{proof}

\section{Reduction of cubic forms}\label{SectionReduction}
Let $p$ be a prime greater than $2$ and $q$ a power of $p$.
Let $\F_q$ be a field consisting of $q$ elements.
We have seen in Section \ref{CurvesGenus4} that
an arbitrary nonhyperelliptic curve of genus $4$ over $\F_q$
is written as $V(P,Q)$ in ${\mathbf P}^3$ where
$P$ is an irreducible cubic form over $\F_q$ and $Q$ is an irreducible quadratic form over $\F_q$.
By the classification theory of quadratic forms, $Q$ is isomorphic to
either of 
{\bf (N1)} $2xw + 2yz$,
{\bf (N2)} $2xw + y^2 - \epsilon z^2$ for $\epsilon\in \F_q^\times \smallsetminus (\F_q^\times)^2$
and {\bf (Dege)} $2yw + z^2$
(cf. \cite[Remark 2.1.1]{KH16}).
Thus we may assume that $Q$ is one of them.
We denote by $\varphi$ the symmetric matrix associated to $Q$.
Let $\gO_\varphi(K)$ and $\tilde \gO_\varphi(K)$
be the orthogonal group
$\{g \in \GL_4(K) \mid {}^t g \varphi g = \varphi\}$
and
the orthogonal similitude group
$\{g \in \GL_4(K) \mid {}^t g \varphi g = \mu \varphi \text{ with } \mu\in K^\times\}$ respectively.
The aim of this section is
to reduce 
the number of indeterminates
in the coefficients in $P$,
considering transformations by elements of $\tilde \gO_\varphi(\F_q)$.
But here we will assume that $V(P,Q)$ has a rational points.
So we start with recalling the fact that
there exists at least one rational point on any superspecial curve over $\F_q$.

\subsection{Existence of rational points on a superspecial curve}
Let $C$ be a curve over a field $K$ of characteristic $p$,
and $J(C)$ its Jacobian variety.
The $p$-rank of $C$ is the rank of the $\Z/p\Z$-module $\Ker(p: J(C)(\overline K) \to J(C)(\overline K)$).
If $C$ is superspecial, then $J(C_{\overline K})$ is a product of supersingular elliptic curves
and therefore its $p$-rank is zero.
It is known that the Frobenius map is nilpotent on $H^1_{\dR}(C)$ if and only if the $p$-rank of $C$ is zero.
The next lemma implies the existence of 
an $\F_q$-rational point on any curve of $p$-rank $0$ over $\F_q$.

\begin{lem}\label{RationalPointOnSSp}
Let $C$ be a curve of $p$-rank $0$ over $\F_q$.
We have
\[
\sharp C(\F_q) \equiv 1 \modulo p.
\]
\end{lem}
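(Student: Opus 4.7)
The plan is to combine the Lefschetz trace formula for curves with a Newton polygon analysis of the $L$-polynomial. Writing the numerator of the zeta function of $C$ as $L(T) = \prod_{i=1}^{2g}(1-\alpha_i T)$ with $\alpha_i \in \overline{\Z}$, one has
\[
\sharp C(\F_q) = q + 1 - \sum_{i=1}^{2g} \alpha_i.
\]
Since $q$ is a positive power of $p$, we have $q \equiv 0 \ (\modulo p)$, so it suffices to show $\sum_i \alpha_i \equiv 0 \ (\modulo p)$.

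Fix an embedding $\overline{\Q} \hookrightarrow \overline{\Q}_p$ and let $v_p$ be the associated valuation, normalized so that $v_p(p)=1$. The classical fact I would invoke is that the $p$-rank of $C$ coincides with the number of indices $i$ for which $\alpha_i$ is a $p$-adic unit, equivalently with the length of the slope-$0$ segment of the Newton polygon of $L(T)$. This follows from the comparison between the $\ell$-adic and crystalline $L$-polynomials together with the Dieudonn\'e--Manin classification. Under our hypothesis this count is zero, so $v_p(\alpha_i) > 0$ for every $i$.

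Consequently $v_p\!\left(\sum_i \alpha_i\right) \ge \min_i v_p(\alpha_i) > 0$. But $\sum_i \alpha_i$ is a rational integer, being the negative of the coefficient of $T$ in $L(T) \in \Z[T]$; an integer with positive $p$-adic valuation is divisible by $p$. Hence $\sum_i \alpha_i \equiv 0 \ (\modulo p)$, and combined with $q \equiv 0 \ (\modulo p)$ this gives $\sharp C(\F_q) \equiv 1 \ (\modulo p)$, as desired.

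The only non-elementary input is the identification of the $p$-rank with the number of $p$-adic unit Weil numbers; once that is in hand, the conclusion is immediate arithmetic. Alternatively, one could avoid invoking Dieudonn\'e--Manin by using the mod-$p$ Lefschetz trace formula
\[
\sharp C(\F_q) \equiv \Tr(F \mid H^0(C,\mathcal{O}_C)) - \Tr(F \mid H^1(C,\mathcal{O}_C)) \ (\modulo p)
\]
and noting that the hypothesis (as recalled just above the lemma) makes Frobenius nilpotent on $H^1(C,\mathcal{O}_C)$, so the second trace vanishes while the first is $1$.
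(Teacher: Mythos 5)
Your primary argument is correct and genuinely different from the paper's. The paper applies Berthelot's crystalline Lefschetz trace formula over $W(\F_q)$, reduces modulo $p$ to $H^1_{\dR}(C)$, and concludes because Frobenius is nilpotent on $H^1(C,\mathcal{O}_C)$ (it acts as zero on the $H^0(C,\Omega^1_C)$ graded piece mod $p$). You instead work with the Weil numbers $\alpha_i$ and invoke the slope-theoretic characterization of the $p$-rank as the horizontal length of the Newton polygon: under the hypothesis every $\alpha_i$ has strictly positive $p$-adic valuation, so the integer $\sum_i\alpha_i$ is divisible by $p$, and $q\equiv 0$ gives $\sharp C(\F_q)\equiv 1 \pmod p$. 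The paper's route is ``upstairs'' (integral crystalline cohomology, then reduce), while yours is ``downstairs'' (the already-known rational $L$-polynomial, then valuate); the paper's input is the Lefschetz formula together with the $H^1_{\dR}$ interpretation, yours is the Dieudonn\'e--Manin (or Manin's unit-root) identification of $p$-rank with unit eigenvalue count. Both are one-step appeals to standard theory and both are sound. Your closing alternative, using the coherent (Katz/Fulton-style) trace formula modulo $p$ together with nilpotency of the Hasse--Witt operator, is essentially the paper's argument restated in $H^\bullet(C,\mathcal{O}_C)$ rather than $H^1_{\dR}$, and is likewise correct.
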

\begin{proof}
Write $q = p^a$ and let $W$ be the ring of Witt vectors $W(\F_q)$.
Thanks to the Lefschetz trace formula by Berthelot
\cite{Berthelot}, Chap.~VII, 3.1, Cor. 3.1.11 on p.~581, we have
\[
\sharp C(\F_q) = 1 + q - \Tr(F^a: H^1_{\cris}(C,W)\to H^1_{\cris}(C,W)).
\]
As $H^1_{\cris}(C,W)/pH^1_{\cris}(C,W)=H^1_{\dR}(C)$, 
it suffices to show that the trace of $F^a$ on $H^1_{\dR}(C)$ is zero.
This follows from 
the fact that $F^a$ on $H^1(C,{\mathcal O}_C)$ is
nilpotent if $C$ is of $p$-rank $0$.
\end{proof}

\subsection{The orthogonal groups in the non-degenerate case}\label{non-degenerate case}
The symmetric matrix $\varphi$ of $Q$ in each case of (N1) and (N2) is respectively
\[
\text{\bf (N1)}\quad
\begin{pmatrix}
0 & 0 & 0 & 1\\
0 & 0 & 1 & 0\\
0 & 1 & 0 & 0\\
1 & 0 & 0 & 0
\end{pmatrix},
\qquad \text{\bf (N2)}\quad
\begin{pmatrix}
0 & 0 & 0 & 1\\
0 & 1 & 0 & 0\\
0 & 0 & -\epsilon & 0\\
1 & 0 & 0 & 0
\end{pmatrix},
\]
where $\epsilon\in K^\times\smallsetminus(K^\times)^2$.
Recall the Bruhat decomposition of the orthogonal (similitude) group
\[
\gO_\varphi(K)=\gB\gW\gU \quad\text{and}\quad \tilde\gO_\varphi(K)=\tilde \gB \gW \gU
\]
with $\gB=\gA\gT\gU$ and $\tilde \gB = \gA\tilde \gT \gU$, where $\gA$,$\gT$,$\tilde\gT$, $\gW$ and $\gU$ in each case are given as follows.

\noindent{\bf (N1)} 
Set $\gT = \{\diag(a,b,b^{-1},a^{-1}) \mid a,b\in K^\times\}$ and
$\tilde \gT = \{\diag(a,b,cb^{-1},ca^{-1}) \mid a,b,c\in K^\times\}$,
\[
\gU = \left\{ \left. \begin{pmatrix}1&a&0&0\\0&1&0&0\\0&0&1&-a\\0&0&0&1\end{pmatrix}\begin{pmatrix}1&0&b&0\\0&1&0&-b\\0&0&1&0\\0&0&0&1\end{pmatrix}\right| a,b\in K\right\},\quad \gA = \left\{1_4, 
\begin{pmatrix}
1&0&0&0\\
0&0&1&0\\
0&1&0&0\\
0&0&0&1
\end{pmatrix}\right\}
\]
and $\gW:=\{1_4, s_1, s_2, s_1s_2\}$ with
\[
s_1 = \begin{pmatrix}
0&1&0&0\\
1&0&0&0\\
0&0&0&1\\
0&0&1&0
\end{pmatrix},\quad
s_2 = \begin{pmatrix}
0&0&1&0\\
0&0&0&1\\
1&0&0&0\\
0&1&0&0
\end{pmatrix}.
\]

\noindent{\bf (N2)} Set
$\gA:=\{1_4, \diag(1,1,-1,1)\}$,
\[
\gU=\left\{\left.\begin{pmatrix}
1 & a & 0 & -a^2/2\\
0 & 1 & 0 & -a\\
0 & 0 & 1 & 0\\
0 & 0 & 0 & 1
\end{pmatrix}
\begin{pmatrix}
1 & 0 & b & b^2/(2\epsilon)\\
0 & 1 & 0 & 0\\
0 & 0 & 1 & b/\epsilon\\
0 & 0 & 0 & 1
\end{pmatrix}
\right| a,b \in K
\right\},
\]
\[
\gW:=\left\{1_4, \begin{pmatrix}
0 & 0 & 0 & 1\\
0 & 1 & 0 & 0\\
0 & 0 & -1 & 0\\
1 & 0 & 0 & 0
\end{pmatrix}\right\},\quad
\tilde \gC=\left\{\left.
R(a,b):=\begin{pmatrix}
1 & 0 & 0 & 0\\
0 & a & \epsilon b & 0\\
0 & b & a & 0\\
0 & 0 & 0 & a^2-\epsilon b^2
\end{pmatrix} \right|
\begin{matrix}
a,b\in K,\\
a^2 -\epsilon b^2 \ne 0
\end{matrix}
\right\}.
\]
Put $\gC = \{R(a,b)\in \tilde \gC \mid a^2-\epsilon b^2 = 1\}$
and $\gT=\gH\gC$ and $\tilde \gT= \gH\tilde \gC$,
where $\gH=\{\diag(a,1,1,a^{-1})\mid a \in K^\times\}$.

When we consider the reduction of cubic forms for (N2), we shall use
\begin{lem}[\cite{KH16}, Lemma 4.1.1]\label{RepresentationRotationGroup}
Let $V$ be the vector space consisting of cubics in $y,z$ over $K$.
Consider the natural representation of $\tilde \gC$ on $V$.
\begin{enumerate}
\item[\rm (1)]
The representation $V$ is the direct sum of two subrepresentations $V_1:=\langle y(y^2-\epsilon z^2), z(y^2-\epsilon z^2)\rangle $
and $V_2:=\langle y(y^2+3\epsilon z^2), z(3y^2+\epsilon z^2)\rangle$.
\item[\rm (2)]
$V_1$ consists of four $\tilde \gC$-orbits in $V_1$. They are the orbits
of $\delta y(y^2-\epsilon z^2)$ with $\delta \in \{0\} \cup K^\times/(K^\times)^3$ respectively.
\end{enumerate}
\end{lem}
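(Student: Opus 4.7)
The plan is to reduce both parts to elementary manipulations in the quadratic extension $L:=K(\sqrt{\epsilon})$, after observing that $\tilde\gC$ is naturally isomorphic to $L^\times$ via $R(a,b)\mapsto\alpha:=a+b\sqrt{\epsilon}$; note that $a^2-\epsilon b^2=N_{L/K}(\alpha)$ is non-zero precisely when $\alpha\in L^\times$, since $\epsilon\notin(K^\times)^2$.

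For part~(1), I would first verify by direct substitution that under $y\mapsto ay+\epsilon bz$, $z\mapsto by+az$, the form $q:=y^2-\epsilon z^2$ transforms to $N_{L/K}(\alpha)\cdot q$; hence $V_1=q\cdot\langle y,z\rangle$ is $\tilde\gC$-stable. For $V_2$ I would introduce the differential operator $\Delta:=\partial_y^2-\epsilon^{-1}\partial_z^2$ and check (e.g.\ by explicit computation on $y^3$) the intertwining relation $\Delta\circ R(a,b)=N_{L/K}(\alpha)\cdot R(a,b)\circ\Delta$ as $K$-linear maps $\Sym^3\langle y,z\rangle\to\langle y,z\rangle$; this makes $\ker\Delta$ $\tilde\gC$-stable. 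A direct evaluation shows $y(y^2+3\epsilon z^2)$ and $z(3y^2+\epsilon z^2)$ lie in $\ker\Delta$, so $V_2\subseteq\ker\Delta$; a dimension count, combined with the fact that the four listed cubics have determinant $16\epsilon^2\ne 0$ in the basis $y^3,y^2z,yz^2,z^3$, forces $\ker\Delta=V_2$ and $V=V_1\oplus V_2$.

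For part~(2), the key is the $K$-linear isomorphism $\iota\colon V_1\smallsetminus\{0\}\to L^\times$ given by $(c_1y+c_2z)\cdot q\mapsto c_2+c_1\sqrt{\epsilon}$. Under $\iota$, the transformation law $(c_1,c_2)\mapsto(c_1a+c_2b,\,c_1\epsilon b+c_2a)$ on the linear factor matches multiplication by $\alpha=a+b\sqrt{\epsilon}$ in $L$, while $q$ picks up the scalar $N_{L/K}(\alpha)$; hence $R(a,b)$ acts on $L^\times$ by multiplication by $\alpha^2\bar\alpha$, where the bar denotes the non-trivial element of $\Gal(L/K)$. Thus the orbits are precisely the cosets of $H:=\{\alpha^2\bar\alpha\mid\alpha\in L^\times\}$, and I identify $L^\times/H$ with $K^\times/(K^\times)^3$ in two steps: first, $K^\times\cap H=(K^\times)^3$, since $\alpha^2\bar\alpha\in K$ combined with $\alpha^2\bar\alpha=\bar\alpha^2\alpha$ forces $\alpha=\bar\alpha$ after canceling $\alpha\bar\alpha$, whence $\alpha\in K^\times$ and the image is $\alpha^3$; second, $L^\times=K^\times\cdot H$, for given $M\in L^\times$ the condition $M\alpha^2\bar\alpha\in K^\times$ reduces (after canceling $\alpha\bar\alpha$) to $\bar\alpha/\alpha=M/\bar M$, and since $M/\bar M$ has norm one, Hilbert's Theorem~90 for the cyclic extension $L/K$ produces the required $\alpha$. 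The orbit indexed by $\delta\in K^\times/(K^\times)^3$ contains $\delta\sqrt{\epsilon}\in L^\times$, which under $\iota^{-1}$ corresponds to $\delta y(y^2-\epsilon z^2)$; together with the zero orbit this completes the enumeration.

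The main technical step is locating the correct identification $(c_1,c_2)\leftrightarrow c_2+c_1\sqrt{\epsilon}$ that matches the coefficient action on $\langle y,z\rangle$ with multiplication on $L$; once this is in place, both the invariance of $V_2$ via $\Delta$ and the orbit count via Hilbert~90 become essentially mechanical. The argument tacitly requires $\mathrm{char}(K)\neq 2,3$: characteristic $2$ is already excluded by the non-degeneracy of $q$, and in characteristic $3$ the operator $\Delta$ degenerates (all the ``$6$''s vanish), in which case one must instead verify $V_2$ is $\tilde\gC$-stable by direct computation on its two generators.
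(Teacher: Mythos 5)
The paper states this lemma with a citation to \cite{KH16}, Lemma 4.1.1, and gives no proof of its own, so there is no in-paper argument to compare yours against; I can only check your argument on its own merits, and it is correct. All the key steps verify. The quadratic $q=y^2-\epsilon z^2$ transforms by the factor $N_{L/K}(\alpha)=a^2-\epsilon b^2$ under $y\mapsto ay+\epsilon bz$, $z\mapsto by+az$. The intertwining relation $\Delta\circ R(a,b)=N_{L/K}(\alpha)\cdot R(a,b)\circ\Delta$ follows from the chain rule (the cross term $2ab\,\partial_y\partial_z$ cancels), and $\Delta$ annihilates both $y^3+3\epsilon yz^2$ and $3y^2z+\epsilon z^3$ while mapping $y^3$ and $z^3$ to nonzero multiples of $y,z$, so $\ker\Delta=V_2$ has dimension $2$; together with the nonzero transition determinant $\pm16\epsilon^2$ this gives $V=V_1\oplus V_2$. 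Under $\iota\colon(c_1y+c_2z)q\mapsto c_2+c_1\sqrt{\epsilon}$ the $\tilde\gC$-action on $V_1\smallsetminus\{0\}$ becomes multiplication by $\alpha^2\bar\alpha$ on $L^\times$, and your two facts $K^\times\cap H=(K^\times)^3$ and $L^\times=K^\times H$ (the latter via Hilbert 90 for the cyclic extension $L/K$) identify $L^\times/H$ with $K^\times/(K^\times)^3$, giving the orbit count. One bookkeeping point worth making explicit: $\iota^{-1}(\delta)$ for $\delta\in K^\times$ is $\delta z(y^2-\epsilon z^2)$, whereas the lemma's representatives $\delta y(y^2-\epsilon z^2)$ correspond to $\delta\sqrt{\epsilon}\in L^\times$; since multiplication by $\sqrt{\epsilon}$ permutes the $H$-cosets bijectively, the family $\{\delta\sqrt{\epsilon}\}$ still meets every nonzero coset exactly once as $\delta$ ranges over coset representatives of $K^\times/(K^\times)^3$, so the stated orbit representatives are indeed a full set — though the labelling by $K^\times/(K^\times)^3$ is shifted by the fixed coset of $\sqrt{\epsilon}$. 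Your closing caveat about characteristic is right and matches the paper's running hypothesis $\mathrm{char}(K)\ne 2,3$ in the section where this lemma is used.
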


\subsection{The orthogonal groups in the degenerate case}\label{degenerate case}
The symmetric matrix $\varphi$ for the degenerate case is
\[
\begin{pmatrix}
0 & 0 & 0 & 0\\
0 & 0 & 0 & 1\\
0 & 0 & 1 & 0\\
0 & 1 & 0 & 0
\end{pmatrix}.
\]
As shown in \cite[Lemma 4.2.1]{KH16} we have the Bruhat decomposition:
\[
\gO_\varphi(K) = (\gB \sqcup \gB s \gU) \gV \quad\text{and}\quad 
\tilde \gO_\varphi(K) = (\tilde \gB \sqcup \tilde \gB s \gU) \gV
\]
with $\gB:=\gA \gT \gU$ and $\tilde \gB := \gA \tilde \gT \gU$, where
$\gA:=\{1_4, \diag(1,1,-1,1)\}$,
\[
\gT:=\left\{\left. T(a):=\begin{pmatrix}
1&0&0&0\\
0&a&0&0\\
0&0& 1&0\\
0&0&0&a^{-1}\end{pmatrix} \right| a \in K^\times\right\},\quad
\gU := \left\{\left. U(a):=\begin{pmatrix}
1&0&0&0\\
0&1&a&a^2(2\epsilon)^{-1}\\
0&0&1&a \epsilon^{-1}\\
0&0&0&1\end{pmatrix} \right| a \in K\right\},
\]
\[
s:=\begin{pmatrix}
1&0&0&0\\
0&0&0&1\\
0&0&1&0\\
0&1&0&0\end{pmatrix},\quad
\gV=\left\{\left.\begin{pmatrix}
a&b&c&d\\
0&1&0&0\\
0&0&1&0\\
0&0&0&1\end{pmatrix} \right| a\in K^\times \text{ and } b, c, d\in K\right\}
\]
and $\tilde \gT := \{\diag(1,b,b,b) \mid b\in K^\times\} \gT$.

\subsection{Reduction of cubic forms in the case of (N1)}
Let $K$ be a field of characteristic $p\ne 2$.
Consider the case of $Q=2xw+2yz$.
Let $P$ be an irreducible cubic form in $x,y,z,w$ over $K$.
Assume that $C=V(P,Q)$ has a $K$-rational point.
We use the notation in Section \ref{non-degenerate case} (N1).
\begin{enumerate}
\item[1.] Considering $\modulo Q$, it suffices to consider only $P$ which has no term containing $xw$.
\begin{eqnarray}\label{N1_GeneralFormOfP}
P &= & a_1 x^3 + (a_2y+a_3z)x^2+(a_4y^2 + a_5yz + a_6z^2)x \nonumber\\
&& + a_7y^3 + a_8y^2 z + a_9y z^2 + a_{10}z^3\\
&& + (a_{11}y^2+a_{12}yz+a_{13}z^2)w + (a_{14}y+a_{15}z)w^2 + a_{16}w^3. \nonumber
\end{eqnarray}
\item[2.] 
By the assumption $C(K)\ne\emptyset$ and considering the action of $\gW$,
there is a rational point with non-zero $w$-coordinate.
Let $(-bc, b, c, 1)$ be such a $K$-rational point on $C$,
which provides us an element of $\gO_\varphi(K)$
\[
\begin{pmatrix}
-bc & -b & -c & 1\\
b & 0 & 1 & 0\\
c & 1 & 0 & 0\\
1 & 0 & 0 & 0
\end{pmatrix}.
\]
Let $P'$ be the cubic obtained by transforming $P$ by this element.
One can check that the $x^3$-coefficient of $P'$ is $P(-bc,b,c,1)=0$.
Thus we may assume that
the $x^3$-coefficient $a_1$ of $P$ is zero.
\item[3.]
\begin{enumerate}
\item[$\bullet$] If $a_2\ne 0$ or $a_3 \ne 0$, then
considering $y\leftrightarrow z$, we may assume $a_2 \ne 0$.
Then the transformation of an element of $\gU$ eliminates the $xy^2$-term and the $xyz$-term from $P$.
\item[$\bullet$] The case of $a_2=a_3=0$.
In this case $C$ is singular at $(1,0,0,0)$.
\end{enumerate}
\item[4.] The composition of a certain element
($x\mapsto cx, w\mapsto w/c$, $y\mapsto dy, z\mapsto z/d$) of $\gT$ and
a constant-multiplication to the whole $P$
transforms $P$ into a cubic where
the $x^2y$-coefficient is $1$ and the $x^2z$-coefficient is $0$
or a representative of an element of $K^\times/(K^\times)^2$
and the $xz^2$-coefficient is in $\{0,1\}$.
\end{enumerate}

\begin{lem}\label{NewReductionLemmaN1}
An element of $\tilde\gO_\varphi(K)$ transforms $P$ into
\begin{eqnarray*}
&& (y + b_1 z)x^2  + b_2 xz^2 \\
&&+ a_1 y^3 + a_2 y^2z + a_3 yz^2 +  a_4 z^3 \\
&&  + (a_5 y^2 + a_6 yz + a_7 z^2)w + (a_8y + a_9z)w^2 + a_{10}w^3,
\end{eqnarray*}
for $a_1,\ldots,a_{10}\in K$
and for $b_1\in\{0\}\cup K^\times/(K^\times)^2$ and $b_2\in\{0,1\}$.
\end{lem}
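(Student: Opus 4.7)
The plan is to carry out the four-step reduction listed immediately before the statement, realizing each step by an explicit element of $\tilde\gO_\varphi(K)$ (or one of the subgroups $\gA, \gT, \gU, \gW$ appearing in the Bruhat decomposition from Section \ref{non-degenerate case}) and verifying the resulting normalization of coefficients.

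Step $1$ reduces $P$ modulo $Q = 2xw + 2yz$: since $xw \equiv -yz \pmod Q$, subtracting a suitable multiple of $Q$ kills every monomial of $P$ containing $xw$, yielding the $16$-parameter form (\ref{N1_GeneralFormOfP}). Step $2$ uses Lemma \ref{RationalPointOnSSp} to produce an $\F_q$-rational point on $C$; after acting by the Weyl group $\gW$ we may assume the point has nonzero $w$-coordinate, hence (using $Q = 0$) of the form $(-bc, b, c, 1)$. One checks directly that the displayed $4\times 4$ matrix lies in $\gO_\varphi(K)$ and sends $(1,0,0,0)$ to $(-bc, b, c, 1)$, so after applying it $P(1,0,0,0) = 0$; inspecting the monomials in (\ref{N1_GeneralFormOfP}), only $x^3$ is nonzero at $(1,0,0,0)$, whence $a_1 = 0$.

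For Step $3$ the Jacobian matrix of $(P, Q)$ at $(1,0,0,0)$, which now lies on $V(P, Q)$, is
\[
\begin{pmatrix} 0 & a_2 & a_3 & 0 \\ 0 & 0 & 0 & 2 \end{pmatrix},
\]
having rank $2$ if and only if $(a_2, a_3) \ne (0, 0)$; non-singularity of $C$ therefore excludes the degenerate subcase. After swapping $y, z$ via the involution in $\gA$ if needed, we may assume $a_2 \ne 0$. Substituting the two unipotent generators of $\gU$ into $P$, the $xy^2$- and $xyz$-coefficients of the result are affine-linear in the two shear parameters, with a $2 \times 2$ linear part whose determinant is a nonzero multiple of a power of $a_2$; hence these two coefficients can be simultaneously annihilated by a unique choice of parameters. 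Since $\gU$ fixes $(1,0,0,0)$ the $x^3$-coefficient remains $0$, and any $xw$-contribution produced by the shear is reabsorbed by re-applying Step $1$.

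For Step $4$, the subtorus $\diag(c, d, d^{-1}, c^{-1}) \in \gT$ combined with the overall rescaling $P \mapsto \lambda P$ multiplies the $x^2 y$-, $x^2 z$-, and $xz^2$-coefficients by $\lambda c^2 d$, $\lambda c^2/d$, and $\lambda c/d^2$ respectively. First $\lambda c^2 d$ is chosen to normalize the $x^2y$-coefficient to $1$; next varying $d$ multiplies the $x^2z$-coefficient by $d^{-2}$, so its orbit is either $\{0\}$ or a full square class in $K^\times$, giving $b_1 \in \{0\} \cup K^\times/(K^\times)^2$; finally $c$ is still free and scales the $xz^2$-coefficient over all of $K^\times$ (or preserves $0$), yielding $b_2 \in \{0, 1\}$. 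The main obstacle is the bilinear computation in Step $3$: one must carefully track, generator by generator, how $\gU$ acts on the cubic monomials of $P$ and confirm that the induced linear map on the pair $(xy^2,\ xyz)$-coefficients is non-degenerate, so that the prescribed coefficients can indeed be simultaneously eliminated without undoing the earlier normalizations.
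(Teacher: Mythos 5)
Your argument follows the paper's own four-step outline for reducing $P$ (the paper does not give a separate formal proof of the Lemma beyond this outline), and your main additions --- the explicit Jacobian computation at $(1,0,0,0)$ justifying that $(a_2,a_3)=(0,0)$ forces a singularity, and the linear-algebra check that the shear system for $\gU$ is invertible --- are exactly the details the paper leaves tacit. So the approach is the same; you are filling in the verification.

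One point in Step~3 deserves more care. You write that the $xy^2$- and $xyz$-coefficients are killed by the shear and that ``any $xw$-contribution produced by the shear is reabsorbed by re-applying Step~1.'' But the two operations do not commute harmlessly: the $\gU$-shear creates an $x^2w$-term with coefficient $-(a_2 v + a_3 u)$, and re-applying Step~1 replaces $x^2w$ by $-xyz$ (since $xw\equiv -yz \pmod Q$), thereby \emph{re-introducing} an $xyz$-contribution of $+(a_2 v + a_3 u)$. Thus the coefficients to be annihilated must be computed \emph{after} the reduction modulo $Q$: they come out as $2a_2u + a_4$ and $3(a_2 v + a_3 u) + a_5$, with $2\times2$ determinant $6a_2^2$ rather than $4a_2^2$. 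This is still a ``nonzero multiple of a power of $a_2$'' precisely when $p\neq 2,3$; it vanishes identically at $p=3$, so the argument (like the paper's one-line assertion in its Step~3) implicitly uses $p\geq 5$. Since the paper only invokes this Lemma for $p\geq 5$ (there are no superspecial genus-$4$ curves for $p\leq 3$), this is not a fatal gap, but your phrasing of the order of operations should be corrected so that the linear system is solved for the post-reduction coefficients. With that adjustment, everything else (Step~2 via the explicit $\gO_\varphi$-matrix, Step~4 via the torus $\diag(c,d,d^{-1},c^{-1})$ together with an overall scalar) is correct and matches the paper.
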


\subsection{Reduction of cubic forms in the case of (N2)}
Let $K$ be a field of characteristic $p\ne 2,3$.
Recall that
the quadratic form in (N2) case is $Q=2xw + y^2 - \epsilon z^2$, where
$\epsilon \not\in(K^\times)^2$.
Consider an irreducible cubic form $P$ in $K[x,y,z,w]$.
Assume that $C=V(P,Q)$ has a $K$-rational point.
We use the notation in Section \ref{non-degenerate case} (N2).
\begin{enumerate}
\item[1.] Considering $\modulo Q$, it suffices to consider only $P$ which has no term containing $xw$, \eqref{N1_GeneralFormOfP}.
\item[2.] 
By the assumption, we have a $K$-rational point $(r,s,t,u)$ on $C$.
If both of $r$ and $u$ were zero,
then $Q(r,s,t,u)=0$ implies $s=t=0$.
Hence $r\ne 0$ or $u\ne 0$.
Considering the action of $\gW$, we may assume $u\ne 0$.
Let $(-(b^2-\epsilon c^2)/2, b, c, 1)$ be such a rational point on $C$,
which provides us an element of $\gO_\varphi(K)$
\[
\begin{pmatrix}
-(b^2-\epsilon c^2)/2 & -b & \epsilon c & 1\\
b & 1 & 0 & 0\\
c & 0 & 1 & 0\\
1 & 0 & 0 & 0
\end{pmatrix}.
\]
Let $P'$ be the cubic obtained by 
transforming $P$ by this element.
The $x^3$-coefficient of $P'$ is $P(-(b^2-\epsilon c^2)/2,b,c,1)=0$.
Thus we may assume that $P$ has $a_1 = 0$.
\item[3.] 
\begin{enumerate}
\item[$\bullet$] If $a_2\ne 0$ or $a_3\ne 0$,
an element of $\gU$ transforms $P$ into
a cubic of which $x^1$-coefficient is a constant-multiplication of 
$(y^2-\epsilon z^2)$, where we used $p\ne 3$.
\item[$\bullet$] If $a_2 = a_3 = 0$,
then $C$ is singular at $(1,0,0,0)$.
\end{enumerate}
\item[4.] The composition of an element of $\tilde \gC$ and a constant-multiplication to the whole $P$ transforms
$P$ into a cubic whose terms only in $y,z$ is of the form
\[
\alpha y(y^2-\epsilon z^2) + \beta y(y^2+3\epsilon z^2) + \gamma z(3y^2+\epsilon z^2)
\]
for $\alpha\in\{0,1\}$ and some $\beta,\gamma \in K$. Here we use Lemma \ref{RepresentationRotationGroup}.

\item[5.] There is an element $(x\mapsto cx, w\mapsto w/c)$ of $\gH$ such that
it transforms $P$ into a cubic whose $z^2w$-term is $0$ or $1$.
\end{enumerate}

Thus we obtain the unconditional version of \cite[Lemma 4.4.1]{KH16}:

\begin{lem}\label{ReductionLemmaN2}
An element of $\tilde\gO_\varphi(K)$ transforms $P$ into the following form
\begin{eqnarray*}
&& (a_1 y + a_2 z)x^2  + a_3 (y^2-\epsilon z^2)x
+ b_1 y(y^2-\epsilon z^2) + a_4 y(y^2+3\epsilon z^2) + a_5 z(3y^2+\epsilon z^2)\\
&&  + (a_6 y^2 + a_7 yz + b_2 z^2)w + (a_8 y + a_9 z)w^2 + a_{10}w^3
\end{eqnarray*}
for some $a_i\in K$ with $(a_1,a_2)\ne (0,0)$
and for $b_1,b_2\in\{0,1\}$.
\end{lem}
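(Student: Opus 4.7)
The plan is to execute the five-step reduction outlined immediately before the statement, using the Bruhat decomposition $\tilde\gO_\varphi(K) = \tilde\gB\gW\gU$ from Section \ref{non-degenerate case}(N2) together with Lemma \ref{RepresentationRotationGroup}.

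Step 1 is automatic: reducing $P$ modulo $Q = 2xw + y^2 - \epsilon z^2$ eliminates all monomials divisible by $xw$, putting $P$ in the shape \eqref{N1_GeneralFormOfP}. For step 2, the hypothesis $C(K) \neq \emptyset$ together with the $\gW$-action (swapping $x$ and $w$ if necessary) supplies a rational point on $C$ with nonzero $w$-coordinate, which by $Q(r, b, c, 1) = 0$ takes the form $(-(b^2 - \epsilon c^2)/2, b, c, 1)$; transforming $P$ by the orthogonal matrix assembled from these coordinates annihilates the $x^3$-coefficient, since its new value equals $P(-(b^2 - \epsilon c^2)/2, b, c, 1) = 0$.

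For step 3, if the $x^2$-coefficient $a_2 y + a_3 z$ were zero, then $(1, 0, 0, 0)$ would lie on $V(P, Q)$ with all relevant partial derivatives vanishing there, contradicting the nonsingularity of $C$; hence $(a_2, a_3) \neq (0, 0)$, which becomes the stated constraint $(a_1, a_2) \neq (0, 0)$ after relabeling in the final form. Applying $U(a, b) \in \gU$ modifies the $x^1$-coefficient $a_4 y^2 + a_5 yz + a_6 z^2$ by the binary quadratic $2(a_2 y + a_3 z)(ay + bz)$; working modulo $\langle y^2 - \epsilon z^2 \rangle$ with basis $(y^2, yz)$ of the quotient, the map sending $(a, b)$ to this quadratic is represented by the matrix $2 \begin{pmatrix} a_2 & a_3/\epsilon \\ a_3 & a_2 \end{pmatrix}$, whose determinant $4(a_2^2 \epsilon - a_3^2)/\epsilon$ is nonzero precisely because $\epsilon \notin (K^\times)^2$. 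Consequently, the $x^1$-coefficient can be reduced to a scalar multiple of $y^2 - \epsilon z^2$.

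For steps 4 and 5 we use the decomposition $\tilde\gT = \gH \tilde\gC$. Lemma \ref{RepresentationRotationGroup}(1) decomposes the space of cubics in $y, z$ as $V_1 \oplus V_2$ (valid under the assumption $p \neq 2, 3$); the $V_2$-component supplies the free coefficients $a_4, a_5$, while Lemma \ref{RepresentationRotationGroup}(2) reduces the $V_1$-component under $\tilde\gC$ to $\delta y(y^2 - \epsilon z^2)$ with $\delta \in \{0\} \cup K^\times/(K^\times)^3$. Multiplying all of $P$ by a global scalar then absorbs the cube class, leaving $b_1 \in \{0, 1\}$. Finally, a suitable $\diag(c, 1, 1, c^{-1}) \in \gH$ rescales the $z^2 w$-coefficient to $b_2 \in \{0, 1\}$. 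The main obstacle is the computation in step 3: one must verify that the two-parameter $\gU$-action covers the full two-dimensional quotient $K[y, z]_2 / \langle y^2 - \epsilon z^2 \rangle$, which is exactly where the non-squareness of $\epsilon$ is used.
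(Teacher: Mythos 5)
Your proposal is correct and follows the paper's own five-step reduction (reduce mod $Q$, use a rational point to kill $a_1$, use $\gU$ to flatten the $x^1$-coefficient to a multiple of $y^2-\epsilon z^2$, apply $\tilde\gC$ plus a global scalar via Lemma~\ref{RepresentationRotationGroup}, then normalize with $\gH$), so it is essentially the same argument. The one thing worth flagging is that in Step~3 the mod-$Q$ reduction of the $x^2w$ term adds an extra $K$-multiple of $y^2-\epsilon z^2$ to the $x^1$-coefficient beyond the $2(a_2y+a_3z)(ay+bz)$ you wrote down; you correctly sidestep this by working in the quotient $K[y,z]_2/\langle y^2-\epsilon z^2\rangle$, where only the displayed $2\times2$ matrix matters, and its invertibility for $(a_2,a_3)\ne(0,0)$ is exactly the non-squareness of $\epsilon$.
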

\subsection{Degenerate case}\label{ReductionDegenerateCase}
We assume that $p\ne 2,3$.
The case of $q>5$ has been treated in \cite[Section 4.5]{KH16}.
Here we study the case of $q=5$. Assume $K=\F_5$ before the next lemma.
\begin{enumerate}
\item[1.] An element ($x\mapsto x+ay+bz+cw$) of $\gV$
transforms $P$  into a cubic without
terms of $x^2y$, $x^2z$, $x^2w$. We may assume that
the coefficients of $x^2y$, $x^2z$, $x^2w$ of
$P$ are zero. 
\item[2.] Considering $\modulo Q$,
we may assume that there is no term containing $yw$ in $P$,
since $yw\equiv -2^{-1}z^2 \modulo Q$.
\item[3.]
\begin{enumerate}
\item[(I)] If there exists an element of $\gO_\varphi(\F_5)$ stabilizing $x$
which transforms $P$ into $P'$
with non-zero term of $y^3$,
an element of $\gU$ transforms $P'$ into one without term of $y^2z$,
and
the same reduction as steps 4, 5 in \cite[Section 4.5]{KH16}
works. The final reduced form is
as in Lemma \ref{ReductionLemmaDegenerate} (1) below,
which is of the same form as in the case of $q>5$.

\item[(II)] Otherwise $P$ has to be of the form
\begin{equation}\label{3-II}
a_0x^3 + (a_1 y^2 + a_2 z^2 + a_3 w^2 + a_4 yz + a_5 zw)x + a_6(y^2z+zw^2).
\end{equation}
Indeed, we may consider only $P$ whose $y^3$-term and $w^3$-term
are zero, considering the action of $s$ (the transposition of $y$ and $w$).
The general form of $P$ is
\[
a_0x^3 + (a_1 y^2 + a_2 z^2 + a_3 w^2 + a_4 yz + a_5 zw)x
+ a_6 y^2z + a_7 yz^2 + a_8 z^3 + a_9 z^2w + a_{10}zw^2.
\]
The element of $s\gU s$
given by $z\mapsto z-cy$, $w\mapsto w + cz - 2^{-1}c^2y$ for $c\in \F_5$
transforms $P$ into a cubic form,
whose $y^3$-coefficient is
\[
(a_{10}-a_6)c + a_7 c^2 - a_8 c^3 + 2a_9 c^4.
\]
This is zero for every $c\in\F_5$ if and only if
$a_6=a_{10}$ and $a_7=a_8=a_9=0$. 
As $P$ is irreducible, we have $a_6 \ne 0$.
\end{enumerate}
\end{enumerate}
Remaining steps in case (II):
\begin{enumerate}
\item[4.] 
Composing some element ($y\mapsto cy, w\mapsto w/c$) of $\gT$ and some constant-multiplication to the whole $P$, we transform $P$ into a cubic where
$a_6$ in (\ref{3-II}) is $1$
and $a_5$ is $0$ or $a_0^{1/3}$. Here we used $(\F_5^\times)^3 = \F_5^\times$.
\item[5.] The transformation $x\mapsto d\cdot x$ for a certain $d\in K^\times$ sends $P$ to a cubic whose coefficient of $x^3$ is $1$.
Then the coefficient of $xzw$ becomes $0$ or $1$ in case (II).
\end{enumerate}

Summarizing this reduction for $q=5$ and that for $q>5$
obtained in \cite[Lemma 4.5.1]{KH16},
we have the following lemma:

\begin{lem}\label{ReductionLemmaDegenerate}
An element of $\tilde\gO_\varphi(K)$ transforms $P$ into the following form {\rm (1)} if $\sharp K > 5$,
and into either of the following forms {\rm (1)} and {\rm (2)} if $\sharp K = 5$.
\begin{enumerate}
\item[\rm (1)]
\begin{eqnarray*}
&& a_0x^3 + (a_1 y^2 + a_2 z^2 + a_3 w^2 + a_4 yz + a_5 zw)x\\
&& + a_6y^3 + a_7z^3 + a_8 w^3 + a_9yz^2 + b_1 z^2 w + b_2 zw^2,
\end{eqnarray*}
for some $a_i\in K$ with $a_0, a_6\in K^\times$ and for $b_1,b_2\in\{0,1\}$,
where the leading coefficient of $R:=a_1 y^2 + a_2 z^2 + a_3 w^2 + a_4 yz + a_5 zw$ is $1$ or $R=0$;
\item[\rm (2)]
\[
x^3 + (a_1 y^2 + a_2 z^2 + a_3 w^2 + a_4 yz + b_1 zw)x + y^2z + zw^2
\]
for $a_i\in K=\F_5$ and $b_1\in\{0,1\}$.
\end{enumerate}
\end{lem}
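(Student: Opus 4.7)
The plan is to handle the two regimes $\sharp K>5$ and $\sharp K=5$ separately. For $\sharp K>5$ the conclusion that $P$ can be transformed into the shape (1) is already established in \cite[Lemma 4.5.1]{KH16}, so no new argument is needed there. The substantive work is to carry out the five-step reduction outlined in Section \ref{ReductionDegenerateCase} in the remaining case $K=\F_5$, and to verify that it terminates either in the shape (1) or in the exceptional shape (2).

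First I would execute Steps 1 and 2 without caring about the size of $K$: applying an appropriate element of $\gV$ kills the coefficients of $x^2y$, $x^2z$, and $x^2w$, and reducing modulo $Q = 2yw + z^2$ eliminates every occurrence of $yw$ via $yw \equiv -z^2/2$. This puts $P$ into a normal form with an $x^3$-term, an $x$-linear piece in $y^2,z^2,w^2,yz,zw$, and a cubic in $y,z,w$ containing no $yw$. The next step is the dichotomy (I)/(II). In Case (I), some element of $\gO_\varphi(\F_5)$ fixing $x$ gives $P$ a non-zero $y^3$-coefficient; I can then kill $y^2z$ using $\gU$ and apply exactly the Steps 4-5 of \cite[Section 4.5]{KH16}, which do not use $\sharp K>5$ in any essential way, to reach form (1).

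The main obstacle is Case (II): to justify that the impossibility of producing a non-zero $y^3$-coefficient forces $P$ to be of the very restricted shape (\ref{3-II}). After using $s$ (the transposition of $y$ and $w$) to kill the $y^3$- and $w^3$-terms simultaneously, the general $P$ has the shape displayed after (\ref{3-II}). I would then apply the $s\gU s$-transformation $z\mapsto z-cy,\ w\mapsto w+cz-c^2y/2$ for each $c\in\F_5$; a direct expansion shows the new $y^3$-coefficient is $(a_{10}-a_6)c + a_7c^2 - a_8c^3 + 2a_9c^4$. Requiring this polynomial in $c$ to vanish on all of $\F_5$ (equivalently, on $\F_5^\times$) gives four linear relations on $(a_6,a_7,a_8,a_9,a_{10})$ whose Vandermonde-like matrix forces $a_6=a_{10}$ and $a_7=a_8=a_9=0$. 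Finally, if $a_6=0$ then the resulting $P$ has $x$ as a factor, contradicting irreducibility, so $a_6\neq 0$. This establishes (\ref{3-II}).

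Steps 4 and 5 then finish Case (II). An element $(y\mapsto cy,\ w\mapsto w/c)$ of $\gT$, combined with a global scalar on $P$, normalises $a_6$ to $1$; since $(\F_5^\times)^3=\F_5^\times$, the same freedom lets us force the $xzw$-coefficient $a_5$ to lie in $\{0, a_0^{1/3}\}$. A final substitution $x\mapsto dx$ rescales $a_0$ to $1$, at which point the two possible values of $a_5$ become $0$ and $1$, yielding form (2). Combining the two regimes gives the stated lemma.
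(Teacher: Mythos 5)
Your proposal follows the paper's own proof step for step: defer the $\sharp K>5$ case to \cite[Lemma 4.5.1]{KH16}, run Steps 1--2, split into cases (I)/(II), derive the shape (\ref{3-II}) via the $s\gU s$-substitution and the Vandermonde-type vanishing argument, use irreducibility to get $a_6\neq 0$, and finish with the $\gT$ and $x\mapsto dx$ normalizations. One phrasing slip worth tightening: you do not ``use $s$ to kill the $y^3$- and $w^3$-terms''---$s$ merely swaps those two coefficients---rather, the defining condition of case (II) (no element of $\gO_\varphi(\F_5)$ stabilizing $x$ produces a nonzero $y^3$-coefficient) forces both to vanish, since the identity and $s$ are each such elements.
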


\section{Main results}\label{sec:main_results}

In this section, we prove Theorems \ref{MainTheorem} and \ref{MainTheorem2} with help of computational results.
The computational results shall be described in Section \ref{subsec:comp_result}.
We choose and fix a primitive element $\zeta^{(q)}$ of $\F_q$ for each of $q=5$ and $q=11$ throughout this section.


\subsection{Superspecial curves over $\mathbb{F}_{5}$ and $\mathbb{F}_{11}$}\label{subsec:sscurves}

\begin{theor}\label{MainTheorem}
There exist precisely $7$ superspecial curves of genus $4$ over $\mathbb{F}_{5}$ up to isomorphism over $\mathbb{F}_{5}$. 
The seven isomorphism classes are given by $C_i = V(Q, P_i)$ with $Q=2yw + z^2$ and
\begin{eqnarray*}
P_1 & =&  x^3+y^3+w^3,\\
P_2 & =& x^3+2y^3+w^3,\\
P_3 & =& x^3+y^3+w^3+zw^2,\\
P_4 & =& x^3+y^3+2w^3+zw^2,\\
P_5 & =& x^3+y^3+3w^3+zw^2,\\
P_6 & =& x^3+y^3+4w^3+zw^2,\\
P_7 & =& x^3+y^2z+zw^2.
\end{eqnarray*}
$($Note that there exists precisely $1$ superspecial curve of genus $4$ over $\mathbb{F}_{5}$ up to isomorphism over the algebraic closure,
cf. $\mbox{\cite[Corollary 5.1.1]{KH16}}$.$)$
\end{theor}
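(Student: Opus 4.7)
The plan is to enumerate, case by case on the normal form of the quadratic $Q$, all irreducible cubics $P \in \mathbb{F}_5[x,y,z,w]$ such that $C = V(P,Q)$ is non-singular and superspecial, and then to quotient by the action of $\tilde{\gO}_\varphi(\mathbb{F}_5)$ to obtain $\mathbb{F}_5$-isomorphism classes.

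First I would apply the reduction lemmas (Lemma~\ref{NewReductionLemmaN1} for (N1), Lemma~\ref{ReductionLemmaN2} for (N2), and Lemma~\ref{ReductionLemmaDegenerate} for (Dege)) to put $P$ into a normal form, leaving only a handful of residual coefficients $a_i \in \mathbb{F}_5$ and a few discrete parameters $b_i$. These lemmas apply legitimately to every candidate because Lemma~\ref{RationalPointOnSSp} with $q=5$ forces $\sharp C(\mathbb{F}_5) \equiv 1 \pmod{5}$, so $C(\mathbb{F}_5) \neq \emptyset$, which is the only hypothesis the reductions need. For $q=5$ the degenerate case splits further into the two sub-forms (1) and (2) of Lemma~\ref{ReductionLemmaDegenerate}.

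For each normal form I would then execute the double hybrid method: regard a carefully chosen subset of the $a_i$ as formal indeterminates in the expansion of $(PQ)^{p-1}=(PQ)^{4}$, brute-force the remaining $a_i$ over $\mathbb{F}_5$, extract the $16$ coefficients listed in Proposition~\ref{cor:HW}, and solve the resulting multivariate polynomial system by Gr\"obner bases in Magma. For every solution, the non-singularity of $V(P,Q)$ is tested via Lemma~\ref{lem:non-sing} and the irreducibility of $P$ (and of $Q$) is checked directly. The surviving curves must then be grouped into $\mathbb{F}_5$-isomorphism classes: exploiting the Bruhat decomposition recalled in Sections~\ref{non-degenerate case} and~\ref{degenerate case}, I would enumerate the finite group $\tilde{\gO}_\varphi(\mathbb{F}_5)$, let it act on the reduced cubics, and record the orbits. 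The statement predicts that types (N1) and (N2) will contribute no orbits at all, and that the degenerate case will yield exactly the seven classes displayed, six of form (1) and one, namely $C_7$, of form (2) of Lemma~\ref{ReductionLemmaDegenerate}.

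The main obstacle is computational rather than conceptual: a naive expansion of $(PQ)^4$ with all residual parameters left as indeterminates is prohibitive, and the whole point of the double hybrid refinement is to split the coefficients between ``expansion'' and ``solving'' roles well enough to make the enumeration finish within a reasonable time. A secondary subtlety is certifying that $C_1,\ldots,C_7$ are pairwise non-isomorphic \emph{over} $\mathbb{F}_5$, even though they collapse to a single class over $\overline{\mathbb{F}_5}$ by \cite[Corollary 5.1.1]{KH16}; this I would handle by checking directly that the displayed reduced cubics lie in distinct $\tilde{\gO}_\varphi(\mathbb{F}_5)$-orbits, a finite verification once the orthogonal group has been listed explicitly from the Bruhat data.
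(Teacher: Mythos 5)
Your plan is correct and would reach the theorem, but it takes a genuinely longer route than the paper in one key place. You propose running the full enumeration for all three quadric types (N1), (N2), and (Dege) and then observing that (N1) and (N2) empirically contribute nothing; the paper instead dispenses with (N1) and (N2) \emph{a priori}. The reason: by \cite[Corollary 5.1.1]{KH16} there is exactly one superspecial genus-$4$ curve over $\overline{\mathbb{F}_5}$, and that curve has a \emph{degenerate} quadric in its Petri canonical embedding. Since the rank of the quadric $Q$ is a geometric invariant of the canonical model and is therefore preserved under base change to $\overline{\mathbb{F}_5}$, every $\mathbb{F}_5$-form of this curve must also have a degenerate quadric, i.e.\ only the case (Dege) of Lemma~\ref{ReductionLemmaDegenerate} needs to be examined. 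This observation cuts the computation by two thirds and is exactly where the $q=5$ proof is lighter than the $q=11$ one (Theorem~\ref{MainTheorem2}), where no such shortcut exists and all three cases are searched. Two minor remarks: the ``double hybrid'' split ($s_1 > s_2$) is not actually exercised for $q=5$ --- in Proposition~\ref{prop:Degenerate_q5}(i) all eight residual $a_i$'s are kept as indeterminates for both the power computation and the Gr\"obner step ($s_1 = s_2 = 8$), so the refinement you emphasize is really a $q=11$ necessity; and for the final orbit count the paper avoids brute-forcing $\tilde{\gO}_\varphi(\mathbb{F}_5)$ by instead testing isomorphism with a Gr\"obner-basis feasibility check on the Bruhat cells (Isomorphism Testing / Collecting Isomorphism Classes Algorithms), though for $\mathbb{F}_5$ your direct orbit enumeration would also be entirely feasible.
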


\begin{proof}
Let $C$ be a curve of genus $4$.
Similarly to the proof of \cite[Theorem A]{KH16}, we may assume that $C$ is nonhyperelliptic, and written as $C=V(P,Q)$ for an irreducible quadratic form $Q$ and an irreducible cubic form $P$ in $\F_{5} [x, y, z, w]$.
We may also assume that $Q$ is either of (N1) $2xw+2yz$, (N2) $2xw+y^2-\epsilon z^2$, or (Dege) $2yw+z^2$, where $\epsilon$ is an element in $\mathbb{F}_5^{\times} \smallsetminus ( \mathbb{F}_5^{\times})^2$.
Moreover it suffices to consider the case (Dege), say $Q=2 y w + z^2$.
%
%
%
%
By Lemma \ref{ReductionLemmaDegenerate}, the cubic form $P$ is assumed to be of the following form:
\begin{enumerate}
\item
\begin{eqnarray*}
&& a_0x^3 + (a_1 y^2 + a_2 z^2 + a_3 w^2 + a_4 yz + a_5 zw)x\\
&& + a_6y^3 + a_7z^3 + a_8 w^3 + a_9yz^2 + b_1 z^2 w + b_2 zw^2
\end{eqnarray*}
for $a_i\in K = \mathbb{F}_5$ and $b_1,b_2\in\{0,1\}$,
where $a_0, a_6\in K^\times = \mathbb{F}_5^{\times}$, or
\item[\rm (2)] 
\[
x^3 + (a_1 y^2 + a_2 z^2 + a_3 w^2 + a_4 yz + b_1 zw)x + y^2z + zw^2
\]
for $a_i\in K=\F_5$ and $b_1\in\{0,1\}$.
\end{enumerate}
It follows from Proposition \ref{prop:Degenerate_q5} in Section \ref{subsec:comp_result} that $C=V (P, Q)$ is superspecial if and only if $P$ is one of $P_i$ for $1 \leq i \leq 7$.
\end{proof}



\begin{theor}\label{MainTheorem2}
There exist precisely $30$ nonhyperelliptic superspecial curves of genus $4$ over $\mathbb{F}_{11}$ up to isomorphism over $\mathbb{F}_{11}$. 
The thirty isomorphism classes are given by $(\mathrm{N1})$ $C_i = V(Q, P_i^{{\rm (N1)}})$ with $Q=2 x w + 2 y z$ for $1 \leq i \leq 8$ as in $\mathrm{Proposition}$ $\mathrm{\ref{prop:N1q11}}$, $(\mathrm{N2})$ $C_i = V(Q, P_i^{\rm (N2)})$ with $Q=2 x w + y^2 - \epsilon z^2$ for $1 \leq i \leq 5$ as in $\mathrm{Proposition}$ $\mathrm{\ref{prop:N2q11}}$, and $(\mathrm{Dege})$ $C_i=V(Q,P_i^{{\rm (Dege)}})$ with $Q= 2 y w + z^2$ as in $\mathrm{Proposition}$ $\ref{prop:Degenerate_q11}$.
Moreover, there exist precisely $9$ nonhyperelliptic superspecial curves of genus $4$ over $\mathbb{F}_{11}$ up to isomorphism over the algebraic closure (see Corollary \ref{MainCorollary}).
\end{theor}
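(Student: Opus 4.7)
The plan is to follow the same overall strategy as in the proof of Theorem \ref{MainTheorem}, but with the computational heavy lifting handled by the double hybrid method described in the introduction, since $p=11$ is large enough that a naive brute force on the reduced cubic is intractable. Specifically, any nonhyperelliptic curve $C$ of genus $4$ over $\F_{11}$ may be written as $C=V(P,Q)$ with $Q$ irreducible quadratic and $P$ irreducible cubic in $\F_{11}[x,y,z,w]$, and $Q$ can be taken to be one of the three normal forms (N1) $2xw+2yz$, (N2) $2xw+y^2-\epsilon z^2$, or (Dege) $2yw+z^2$. Since any superspecial curve has an $\F_{11}$-rational point by Lemma \ref{RationalPointOnSSp}, the hypotheses of Lemmas \ref{NewReductionLemmaN1}, \ref{ReductionLemmaN2}, and \ref{ReductionLemmaDegenerate} are satisfied in each case, so $P$ can be put into the corresponding reduced normal form with a small number of free coefficients $a_i$ and discrete parameters $b_j$.

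For each of the three cases, I would next plug the reduced $P$ and the corresponding $Q$ into the superspeciality criterion of Proposition \ref{cor:HW}. This produces a system of $16$ polynomial equations in the $a_i$ (parametrized by the discrete choices of $b_j$ and $\epsilon$) obtained from the prescribed coefficients of $(PQ)^{10}$. To solve the system, I would apply the double hybrid method: choose two subsets of the $a_i$, the larger one to be specialized to $\F_{11}$-values before computing $(PQ)^{10}$ (to keep the symbolic expansion cheap), the smaller one to remain symbolic and be specialized later after a partial Gr\"obner basis computation on the resulting subsystem. For each joint specialization the remaining equations become a low-dimensional system over $\F_{11}$ that Magma's Gr\"obner engine handles quickly; selecting the two subsets to balance expansion cost against enumeration cost is the main engineering task.

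For every solution tuple $(a_i)$ obtained, I would recover the candidate curve $C=V(P,Q)$ and test non-singularity using Lemma \ref{lem:non-sing}, discarding singular candidates. This yields a (large) list of superspecial defining equations; the reduction lemmas do not fully trivialize the action of $\tilde\gO_\varphi(\F_{11})$, so the last step is to classify the remaining members of the list up to $\F_{11}$-isomorphism. I would do this by enumerating the residual action on each normal form --- the Bruhat cosets in $\gW\gU$ for (N1), those together with $\tilde\gC/\gC$ for (N2), and the unused elements of $\gV$ and $s\gU$ for (Dege) --- and sorting candidates into orbits. Totaling the orbits in (N1), (N2), (Dege) should give the $8+5+(\text{dege count})=30$ classes recorded in Propositions \ref{prop:N1q11}, \ref{prop:N2q11}, \ref{prop:Degenerate_q11}. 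For the second assertion, I would pass from $\F_{11}$-isomorphism to $\overline{\F_{11}}$-isomorphism by re-running the classification allowing $\tilde\gO_\varphi(\overline{\F_{11}})$ transformations: concretely, the discrete parameters $\epsilon$, the square/cube class representatives $b_1, b_2, \delta$ in the reduction lemmas, and the orbit representatives of $\gW$, all collapse over the algebraic closure, and the resulting merger produces the claimed $9$ isomorphism classes (this is the content of the forthcoming Corollary \ref{MainCorollary}).

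The hard part is not the mathematics but the cost of the expansion of $(PQ)^{10}$ in case (N1), where the reduced form of $P$ has the most free coefficients; this is precisely the bottleneck for which the double hybrid method was introduced, and the proof's feasibility rests on choosing good specialization subsets for this case. A secondary difficulty is making the orbit classification under $\tilde\gO_\varphi(\F_{11})$ genuinely finite and deterministic: one must verify that the residual group action after applying the reduction lemma is small enough to enumerate, which is why the Bruhat decomposition recalled in Sections \ref{non-degenerate case} and \ref{degenerate case} is essential.
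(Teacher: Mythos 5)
Your proposal takes essentially the same approach as the paper: normalize $Q$ to one of (N1), (N2), (Dege), reduce $P$ by Lemmas \ref{NewReductionLemmaN1}, \ref{ReductionLemmaN2}, \ref{ReductionLemmaDegenerate} (the rational-point hypothesis being supplied by Lemma \ref{RationalPointOnSSp}), enumerate solutions of the superspeciality system from Proposition \ref{cor:HW}, discard singular candidates via Lemma \ref{lem:non-sing}, and then sort survivors into $\tilde\gO_\varphi$-orbits using the Bruhat decomposition --- exactly the content of Propositions \ref{prop:N1q11}--\ref{prop:Degenerate_q11}, Proposition \ref{prop:for_cor} and Corollary \ref{MainCorollary}. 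One detail to correct in your description of the double hybrid method: the paper specializes the \emph{smaller} batch of $t-s_1$ coefficients (together with the $b_j$'s) before computing $(PQ)^{p-1}$, deliberately keeping the larger set of $s_1$ coefficients symbolic through the expensive expansion, and only afterwards iterates over $s_1 - s_2$ further coefficients for the Gr\"obner solve --- see Table \ref{timing}, where e.g.\ case (N1)(ii) has just $22$ outer iterations but $\sim 3\times 10^5$ inner ones. Specializing the larger subset first, as your proposal states, would force a fresh expansion of $(PQ)^{p-1}$ for nearly every inner tuple and collapses back to the single hybrid method of \cite{KH16}, which the paper estimates to be roughly two orders of magnitude slower and was precisely the obstruction to reaching $p=11$.
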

\begin{proof}
Let $C$ be a nonhyperelliptic curve of genus $4$ over $\mathbb{F}_{11}$.
As in the proof of Theorem \ref{MainTheorem} the curve $C$ is written as $C=V(P,Q)$ for an irreducible quadratic form $Q$ and an irreducible cubic form $P$ in $\F_{11} [x, y, z, w]$, where $Q$ is either of (N1) $2xw+2yz$, (N2) $2xw+y^2-\epsilon z^2$ and (Dege) $Q = 2yw+z^2$.
Here $\epsilon$ is an element in $\mathbb{F}_{11}^{\times} \smallsetminus ( \mathbb{F}_{11}^{\times})^2$.
Let $\zeta:=\zeta^{(11)}$ be a generator of the cyclic group $\mathbb{F}_{11}^{\times}$.
We first consider the non-degenerate cases (N1) and (N2).
\begin{description}
	\item[(N1){\rm :}] By Lemma \ref{NewReductionLemmaN1}, the curve $C=V(P,Q)$ is $\F_{11}$-isomorphic to $V(P',Q)$ for some
	\begin{equation}
	\begin{split}
	P' =& ( y + b_1 z) x^2 + b_2 x z^2 \\
	& + a_1 y^3 + a_2 y^2 z + a_3 y z^2 + a_4 z^3 \\
	& + ( a_5 y^2  + a_6 y z  + a_7 z^2 ) w + ( a_8  y + a_9 z ) w^2 + a_{10} w^3,
	\end{split}\nonumber
	\end{equation}
	where $a_1, \ldots , a_{10} \in \mathbb{F}_{11}$, $b_1 \in \{ 0, 1, \zeta \}$ and $b_2 \in \{ 0, 1 \}$.
	By Proposition \ref{prop:N1q11} in Section \ref{subsec:comp_result}, the curve $V (P', Q)$ is superspecial if and only if $P'$ is one of $P_i^{({\rm N1})}$ for $1 \leq i \leq 8$.
	\item[(N2){\rm :}] By Lemma \ref{ReductionLemmaN2}, the curve $C=V(P,Q)$ is $\F_{11}$-isomorphic to $V(P',Q)$ for some
	\begin{equation}
	\begin{split}
	P' = & ( a_1 y + a_2 z ) x^2 + a_3 (y^2 - \epsilon z^2) x + b_1 y ( y^2 - \epsilon z^2 ) + a_4 y ( y^2 + 3 \epsilon z^2 ) + a_5 z (3 y^2 + \epsilon z^2 ) \\
	& + ( a_6 y^2 + a_7 y z + b_2 z^2 ) w + ( a_8 y + a_9 z ) w^2 + a_{10} w^3,
	\end{split}\nonumber
	\end{equation}
	where $(a_1, a_2) \neq (0, 0)$ and $b_1, b_2 \in \{ 0, 1 \}$.
	By Proposition \ref{prop:N2q11} in Section \ref{subsec:comp_result}, the curve $V (P', Q)$ is superspecial if and only if $P'$ is one of $P_i^{({\rm N2})}$ for $1 \leq i \leq 5$.
\end{description}
We next consider the degenerate case (Dege): $Q=2 y w + z^2$.
\begin{description}
	\item[(Dege){\rm :}] 
It follows from Lemma \ref{ReductionLemmaDegenerate} that $C=V(P,Q)$ is $\F_{11}$-isomorphic to $V(P',Q)$ for some
	\begin{equation}
	\begin{split}
	P^{\prime} =& a_0 x^3 + ( a_1 y^2 + a_2 z^2 + a_3 w^2 + a_4 y z  + a_5 z w ) x  \\
	& + a_6 y^3 + a_7 z^3 + a_8 w^3 + a_9 y z^2 + b_1 z^2 w + b_2 z w^2,
	\end{split}\nonumber
	\end{equation}
	where $a_0, a_6 \in \mathbb{F}_{11}^{\times}$ and $b_1, b_2 \in \{ 0, 1 \}$.
	By Proposition \ref{prop:Degenerate_q11} in Section \ref{subsec:comp_result}, the curve $V (P', Q)$ is superspecial if and only if $P'$ is one of $P_i^{({\rm Dege})}$ for $1 \leq i \leq 17$.
\end{description}
Summarizing the above descriptions, we have the theorem.
\end{proof}

\begin{cor}\label{MainCorollary}
Any nonhyperelliptic superspecial curve of genus $4$ over $\mathbb{F}_{11}$ is isomorphic over $\overline{\mathbb{F}_{11}}$ to one of the curves $V(Q^{{\rm (N1)}}, P_i^{({\rm alc})})$ for $1 \leq i \leq 3$, or $V(Q^{({\rm Dege})}, P_j^{({\rm alc})})$ for $4 \leq j \leq 9$, where $Q^{{\rm (N1)}} := 2 x w + 2 y z$, $Q^{({\rm Dege})}:= 2 yw + z^2$ and
\begin{eqnarray}
P_1^{(\mathrm{alc})}&:=& x^2 y + x^2 z + 2 y^2 z + 5 y^2 w + 9 y z^2 + y z w + 4 z^3 + 3 z^2 w + 10 z w^2 + w^3, \nonumber \\
P_2^{(\mathrm{alc})} &:=& x^2 y + x^2 z + y^3 + y^2 z + 7 y z^2 + 4 y w^2 + 2 z^3 + 9 z w^2, \nonumber \\
P_3^{(\mathrm{alc})} &:=& x^2 y + x^2 z + y^3 + 8 y^2 z + 3 y z^2 + 10 y w^2 + 10 z^3 + 10 z w^2, \nonumber \\
P_4^{(\mathrm{alc})} &:=& x^3 + y^3 + w^3, \nonumber \\
P_5^{(\mathrm{alc})} &:=& x^3 + y^3 + z^3 + 5 w^3, \nonumber \\
P_6^{(\mathrm{alc})} &:=& x^3 + x w^2 + y^3, \nonumber \\
P_7^{(\mathrm{alc})} &:=& x^3 + x z w + y^3 + 7 z^3 + w^3, \nonumber \\
P_8^{(\mathrm{alc})} &:=& x^3 + x y z + x w^2 + y^3 + 5 z^3 + 4 w^3, \nonumber \\
P_9^{(\mathrm{alc})} &:=& x^3 + x y z + 6 x w^2 + y^3 + 8 z^3 + 8 w^3. \nonumber 
\end{eqnarray}
\end{cor}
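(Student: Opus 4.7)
The goal is to group the $30$ $\mathbb{F}_{11}$-isomorphism classes of Theorem \ref{MainTheorem2} into their $\overline{\mathbb{F}_{11}}$-isomorphism classes, and to identify each with one of the nine representatives $P_i^{(\mathrm{alc})}$.

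First I would observe that the rank of the quadratic form $Q$ defining a canonical model of a nonhyperelliptic genus-$4$ curve is an isomorphism invariant (it is the rank of the unique quadric in the linear system $|2 K_C|$ that contains the canonical image). Hence the $17$ curves in case (Dege), whose $Q$ has rank $3$, cannot be $\overline{\mathbb{F}_{11}}$-isomorphic to any of the $13$ curves in cases (N1) or (N2), whose $Q$ has rank $4$. This separates the problem into two independent subproblems: (a) group the $8+5=13$ non-degenerate curves into three classes matching $P_1^{(\mathrm{alc})},P_2^{(\mathrm{alc})},P_3^{(\mathrm{alc})}$, and (b) group the $17$ degenerate ones into six classes matching $P_4^{(\mathrm{alc})},\dots,P_9^{(\mathrm{alc})}$.

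For subproblem (a), the key fact is that over $\overline{\mathbb{F}_{11}}$ the forms (N1) and (N2) become equivalent: choosing $\delta\in\overline{\mathbb{F}_{11}}$ with $\delta^2=\epsilon$, the substitution $y\mapsto y+\delta z$, $z\mapsto (y-\delta z)/(2\delta)$ converts $2xw+y^2-\epsilon z^2$ into a scalar multiple of $2xw+2yz$. This rewrites every (N2) curve as an (N1) curve over $\overline{\mathbb{F}_{11}}$. I would then re-run the reduction of Lemma \ref{NewReductionLemmaN1}, but now over $\overline{\mathbb{F}_{11}}$: since every non-zero element is a square, the discrete parameter $b_1\in\{0\}\cup K^\times/(K^\times)^2$ collapses to $\{0,1\}$, and $\tilde\gT(\overline{\mathbb{F}_{11}})$ permits extracting arbitrary roots, giving further continuous normalization of the remaining coefficients. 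Running this reduction on each of the $13$ curves produces precisely three canonical forms, which I would then match with $P_1^{(\mathrm{alc})},P_2^{(\mathrm{alc})},P_3^{(\mathrm{alc})}$. For subproblem (b), I apply Lemma \ref{ReductionLemmaDegenerate}(1) (valid for all infinite $K$) over $\overline{\mathbb{F}_{11}}$. Because $\overline{\mathbb{F}_{11}}^\times$ is a divisible abelian group, the binary alternatives $b_1,b_2\in\{0,1\}$ and the cube-class restriction from Step 4 of Section \ref{ReductionDegenerateCase} become trivial, and unrestricted scalings in $\tilde\gT$ bring each curve into a single canonical position; comparing these positions identifies the $17$ curves with $P_4^{(\mathrm{alc})},\dots,P_9^{(\mathrm{alc})}$.

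The main obstacle is twofold. First, I need to produce explicit coordinate changes in $\tilde\gO_\varphi(\overline{\mathbb{F}_{11}})$ realising each collapse — a finite but voluminous case analysis which I would automate in Magma by searching over $\tilde\gO_\varphi(\mathbb{F}_{11^k})$ for small $k$ for a matrix sending one reduced cubic to another modulo $Q$. Second, I must verify that no additional identifications occur among the nine representatives. For this, it suffices to distinguish them by a coarse geometric invariant such as the $\mathbb{F}_{11^2}$-point count, the $L$-polynomial of the curve, or the geometric automorphism group, any one of which is cheap to compute directly from the displayed equations and, if it separates the nine candidates, certifies pairwise non-isomorphism.
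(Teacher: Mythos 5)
The overall strategy you propose matches the paper's in its broad strokes: both arguments split the degenerate from the non-degenerate case (the paper does this implicitly by treating the three $Q$-types separately; you make the rank-invariance explicit, which is a nice touch), both merge (N2) into (N1) over $\overline{\mathbb{F}_{11}}$ via a square root of $\epsilon$ (the paper's Proposition~\ref{prop:for_cor} uses exactly your change of coordinates, recorded there as the matrix $M_Q$), and both then group the $\mathbb{F}_{11}$-classes computationally. Where the paper runs a variant of the Isomorphism Testing Algorithm over (a finite extension standing in for) $\overline{\mathbb{F}_{11}}$, which simultaneously \emph{produces} identifications and \emph{certifies} non-identifications by showing the relevant Gr\"obner bases trivialize, you propose instead to normalize each cubic further over the closure and then separate the surviving representatives with a ``coarse geometric invariant.''

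That last step is where your argument has a genuine gap. The first two invariants you name---the $L$-polynomial and the $\mathbb{F}_{11^2}$-point count---cannot do the job, and more importantly are not invariants of the $\overline{\mathbb{F}_{11}}$-isomorphism class at all: twisting a curve over $\mathbb{F}_{11^2}$ changes its point count and $L$-polynomial without changing its geometric isomorphism class, and conversely all superspecial genus-$4$ curves that are maximal over $\mathbb{F}_{11^2}$ share the single $L$-polynomial $(1+11T)^8$. So these quantities neither distinguish the nine $P_i^{(\mathrm{alc})}$ nor, if they did, would that prove anything about $\overline{\mathbb{F}_{11}}$-isomorphism. The geometric automorphism group \emph{is} a legitimate $\overline{\mathbb{F}_{11}}$-invariant and might suffice, but you neither compute it nor verify that it separates the nine candidates; the paper sidesteps this entirely by certifying non-isomorphism directly through the Gr\"obner-basis test (which returns ``no solution over $\overline{\mathbb{F}_{11}}$''), and you should either do the same or carry out the automorphism-group computation and confirm that it is injective on the nine representatives before the proof can be considered complete.
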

\begin{proof}
The result follows from the proof of  Theorem \ref{MainTheorem2} together with Propositions \ref{prop:N1q11} -- \ref{prop:for_cor}.
\end{proof}


\subsection{Modified version of Main Algorithm in \cite{KH16}}\label{subsec:algorithm}

In \cite[Section 5.2]{KH16}, an algorithm (Main Algorithm together with a pseudocode in \cite[Algorithm 5.2.1]{KH16}) to enumerate superspecial curves of genus $4$ was given.
In this subsection, we improve the algorithm in \cite{KH16}.

Let $C$ be a nonhyperelliptic curve of genus $4$.
As we have seen in Section \ref{CurvesGenus4}, the curve $C$ is defined by an irreducible quadratic form $Q$ and an irreducible cubic form $P$ in $K [x, y, z, w]$, say $C=V(P,Q)$.
The cubic form $P$ can be transformed into
\begin{equation}
\sum_{i=1}^t a_i p_i + \sum_{j=1}^u b_j q_j \label{input_P}
\end{equation}
for some cubics $p_i$'s and $q_j$'s, and some exact elements $a_i$'s and $b_j$'s in $K$.
We would like to enumerate {\it all} $( a_1, \ldots , a_t, b_1, \ldots , b_u )$ such that $C = V( P, Q)$ is superspecial.
In the following, we describe our modified version of the algorithm in \cite{KH16} for the enumeration.

\paragraph{Modified Version of Main Algorithm in \cite{KH16}:}
We denote by $\mathcal{M}$ the set of the $16$ monomials given in Proposition \ref{cor:HW}.
Let $Q$ be a quadratic form over $K:= \mathbb{F}_q$.
Let $p_1, \ldots , p_t$, and $q_1, \ldots , q_u$ be cubics over $K$.
We assume here that $(a_1, \ldots , a_t, b_1, \ldots , b_u)$ can take all elements of a subset $\mathcal{A}$ of $K^{t+u}$.
Our aim is to compute all $(a_1, \ldots , a_t, b_1, \ldots , b_u)$ such that $C = V ( P, Q )$ are superspecial for $P = \sum_{i=1}^ta_i p_i + \sum_{j=1}^u b_j q_j$.
Our enumeration algorithm is divided into the following four steps:

\begin{enumerate}\setcounter{enumi}{0}
	\item[(0)] Choose $1 \leq s_1 \leq t$ and indices $k_1, \ldots , k_{s_1}$, and then regard $a_{k_1}, \ldots , a_{k_{s_1}}$ as indeterminates.
	The remaining part $(a_{k_1^{\prime}},\ldots,a_{k_{t_1}^{\prime}})$  ($\{ k_1^{\prime}, \ldots, k_{t_1}^{\prime} \} = \{1,\ldots,t \}\setminus \{k_1,\ldots,k_{s_1}\}$) runs through a subset $\mathcal{A}_1$ of $\mathbb{F}_q^{\oplus (t-s_1)} = \mathbb{F}_{q}^{\oplus t_1}$, which we determine in each case.
\end{enumerate}
For each $( a_{k_1^{\prime}}, \ldots , a_{k_{t_1}^{\prime}} ) \in \mathcal{A}_1$, proceed the following three steps:
\begin{enumerate}
	\item[(1)] Compute $h:=(P Q)^{p-1}$ over $\mathbb{F}_{q} [a_{k_1}, \ldots, a_{k_{s_1}}]$, where $a_{k_1}, \ldots, a_{k_{s_1}}$ are indeterminates. 
	\item[(2)] Choose $1 \leq s_2 \leq s_1$ and indices $i_1, \ldots , i_{s_2}$ such that $\{ i_1, \ldots , i_{s_2} \} \subset \{ k_1, \ldots , k_{s_1} \}$, and then regard $a_{i_1}, \ldots , a_{i_{s_2}}$ as indeterminates.
	The remaining part $(a_{j_1},\ldots,a_{j_{t_2}})$  ($\{j_1,\ldots,j_{t_2}\} = \{k_1,\ldots, k_{s_1}\}\smallsetminus \{i_1,\ldots,i_{s_2}\}$) runs through a subset $\mathcal{A}_2$ of $\mathbb{F}_q^{\oplus (s_1-s_2)}= \mathbb{F}_{q}^{\oplus t_2}$, which we determine in each case.
	\item[(3)] For each $\left( a_{j_1}, \ldots , a_{j_{t_2}} \right) \in \mathcal{A}_2$, proceed the following three sub-procedures:
	\begin{enumerate}
		\item Put
		\[
		\mathcal{S} := \{ c \in \mathbb{F}_q [ a_{i_1}, \ldots , a_{i_{s_2}} ] : c m \mbox{ is a term of } h \mbox{ for some } m \in \mathcal{M} \}.
		\]
		\item Solve the multivariate system $f ( a_{i_1}, \ldots , a_{i_{s_2}} ) = 0$ for all $f \in \mathcal{S}$ over $\mathbb{F}_q$.
		\item For each solution $( a_{i_1}, \ldots , a_{i_{s_2}})$, substitute it into unknown coefficients in $P$, and decide whether $C = V ( P, Q)$ is non-singular or not.
	\end{enumerate}
\end{enumerate}
In Algorithm \ref{alg:enume} of Appendix \ref{sec:code}, we give a pseudocode to proceed the above four steps.

\begin{rem}
In the above algorithm (Modified Version of Main Algorithm in \cite{KH16}), one can take the following procedures instead of (2) - (3):
\begin{enumerate}
\item[$(2)'$] Choose $1 \leq s_2 \leq s_1$ and indices $i_1, \ldots , i_{s_2}$ such that $\{ i_1, \ldots , i_{s_2} \} \subset \{ k_1, \ldots , k_{s_1} \}$, and let $\{j_1,\ldots,j_{t_2}\}$ be the remaining part, i.e., $\{j_1,\ldots,j_{t_2}\} = \{k_1,\ldots, k_{s_1}\}\smallsetminus \{i_1,\ldots,i_{s_2}\}$.
Let $\mathcal{A}_2 \subset \mathbb{F}_q^{\oplus (s_1-s_2)}= \mathbb{F}_{q}^{\oplus t_2}$ be the set of candidates of $(a_{j_1},\ldots,a_{j_{t_2}})$.
Different from the procedure (2) in Modified Version of Main Algorithm in \cite{KH16}, we keep $(a_{j_1},\ldots,a_{j_{t_2}})$ as indeterminates.
\item[$(3)'$] For each $\left( c_{j_1}, \ldots , c_{j_{t_2}} \right) \in \mathcal{A}_2$, proceed the following three sub-procedures:
\begin{enumerate}
		\item[$(a)'$] Put
		\[
		\mathcal{S} := \{ c \in \mathbb{F}_q [ a_{i_1}, \ldots , a_{i_{s_1}} ] : c m \mbox{ is a term of } h \mbox{ for some } m \in \mathcal{M} \},
		\]
		and
		\[
		\mathcal{S}^{\prime}:= \mathcal{S} \cup \{ a_{j_1} - c_{j_1}, \ldots , a_{j_{t_2}} - c_{j_{t_2}} \}.
		\]
		Note that $a_{j_1}, \ldots , a_{j_{t_2}}$ are indeterminates, whereas $c_{j_1}, \ldots , c_{j_{t_2}}$ are exact elements in $\mathbb{F}_q$.
		\item[$(b)'$] Solve the multivariate system $f ( a_{i_1}, \ldots , a_{i_{s_1}} ) = 0$ for all $f \in \mathcal{S}^{\prime}$ over $\mathbb{F}_q$.
		\item[$(c)'$] For each solution $( a_{i_1}, \ldots , a_{i_{s_1}})$, substitute it into unknown coefficients in $P$, decide whether $C = V ( P, Q)$ is non-singular or not.
	\end{enumerate}
\end{enumerate}
In $(2)'$, we add generators $a_{j_1} - c_{j_1}, \ldots , a_{j_{t_2}} - c_{j_{t_2}}$ into $\mathcal{S}$ instead of substituting elements in $\mathbb{F}_q$ into $a_{j_1}, \ldots , a_{j_{t_2}}$.
These alternative procedures give another improvement of Main Algorithm in \cite{KH16}, which we call Another Improved Algorithm here.
We have conducted the computation to enumerate superspecial curves of genus $4$ over $\mathbb{F}_{11}$ by not only Modified Version of Main Algorithm in \cite{KH16}, but also Another Improved Algorithm.
From the outputs, we observe that there are a time-memory trade-off between these two improvements.
This shall be an interesting phenomenon, but we do not precisely deal with Another Improved Algorithm in this paper.
\end{rem}

\paragraph{Our Modification: Double Hybrid Method}
We describe our modification of the previous algorithm (Main Algorithm in \cite{KH16}), and its effects on total time for our enumeration:
Assume for simplicity that $b_i$'s are fixed and that $a_i$'s can take all elements of $\mathbb{F}_q$.
In the following, we denote by
\begin{description}
\item[$t_{mlt}${\rm :}] average time for computing $(PQ)^{p-1}$, and
\item[$t_{GBslv}${\rm :}] average time for solving multivariate systems,
\end{description}
see also Table Notation in Section \ref{subsubsec:timing}.
Let $t$ be the number of unknown $a_i$'s.
In the previous version, we first choose and fix the number of the indeterminates in the computation of solving multivariate systems.
In other words, we use the {\it same} number of indeterminates in computing $(P Q)^{p-1}$ and solving multivariate systems.
For the fixed number $s$, we run through $t - s$ coefficients in $P$.
For each tuple of coefficients, one computes $(P Q)^{p-1}$ over $\mathbb{F}_q [a_{i_1}, \ldots , a_{i_s}] [x,y,z,w]$, and solve a multivariate system in $\mathbb{F}_q [a_{i_1}, \ldots , a_{i_s}]$.
In this case, the number of total iterations is $q^{t-s}$, and hence required time is roughly estimated as
\[
q^{t-s} (t_{mlt} + t_{GBslv}),
\]
where we suppose that non-singularity testing is negligible.
From outputs obtained by the previous algorithm in our experiments, we observe in our enumeration that the computation of $(P Q)^{p-1}$ might be dominant for large $p$ if each multivariate system is quite efficiently solved.
This depends on the value of $p$, rather than the number of indeterminates of the coefficient ring $\mathbb{F}_q [a_{i_1}, \ldots , a_{i_s}]$.
From this, we consider increasing the number of indeterminates in the computation of $(P Q)^{p-1}$, but not changing (or reducing) that in the multivariate system solving step.
In other words, we may not use the same number of indeterminates in computing $(P Q)^{p-1}$ and solving multivariate systems.
As showed in Modified Version of Main Algorithm in \cite{KH16}, our modified algorithm first chooses respectively the number of indeterminates in computing $(P Q)^{p-1}$ and solving multivariate systems, say $s_1$ and $s_2$.
For the fixed $s_1$, we run through $t - s_1$ coefficients in $P$.
For each tuple of coefficients, one computes $(P Q)^{p-1}$ over $\mathbb{F}_q [a_{i_1}, \ldots , a_{i_{s_1}}] [x,y,z,w]$.
After that, we also run through $s_1 - s_2$ coefficients in $P$, and solve a multivariate system in $\mathbb{F}_q [a_{i_1}, \ldots , a_{i_{s_2}}]$.
In this case, required time is roughly estimated as
\[
q^{t-s_1} \left( t_{mlt} + q^{s_1-s_2}  t_{GBslv} \right),
\]
where we suppose that non-singularity testing is negligible.
Hence, if $p$ (or $q$) is large enough and if $t_{GBslv}$ is negligible compared to $t_{mlt}$, we expect
\[
q^{t-s_1} \left( t_{mlt} + q^{s_1-s_2}  t_{GBslv} \right) \ll q^{t-s} \cdot ( t_{mlt} + t_{GBslv}).
\]
For example, if $q=11$, $t=10$, $s_1=9$, $s_2=s=5$, $t_{mlt}=10$ (seconds) and $t_{GBslv}=0.05$ (seconds), we estimate
\[
q^{t-s} (t_{mlt} + t_{GBslv}) \approx 1618562
\]
whereas
\[
q^{t-s_1} \left( t_{mlt} + q^{s_1-s_2}  t_{GBslv} \right) \approx 8162,
\]
which is about 198 times faster than using the previous version.

We call this method {\it double hybrid method} in our enumeration of superspecial curves of genus $4$.
Thanks to this double hybrid method, we have succeeded in finishing all the computations necessary to show the main theorems with this double hybrid method, see also Section \ref{subsubsec:timing}.
Here we heuristically decided $s_1$ and $s_2$ from experimental computations.
\subsection{Enumerating isomorphism classes}\label{subsec:isom}

Let $K = \mathbb{F}_q$ be the field of order $q$, and $Q$ an irreducible quadratic form in $K [x,y,z,w]$.
Let $\varphi$ be the symmetric matrix associated to $Q$.
Let $C_1 = V (Q, P_1)$ and $C_2 = V (Q, P_2)$ be two curves of genus $4$ over $K$ with irreducible cubic forms $P_1$ and $P_2$ in $K [x, y, z, w]$.
The two curves $C_1$ and $C_2$ are isomorphic over $K$ if and only if there exists $g \in \tilde{\rm O}_{\varphi} (K)$ such that
\[
g \cdot P_1 \equiv \lambda P_2 \mbox{ mod } Q 
\]
for some $\lambda \in K^{\times}$.
With this fact, we write down an algorithm for determining whether two curves of genus $4$ are isomorphic over $K$ or not.
Let us focus on the case of (N1), and give an algorithm only for the case in this paper;
as we will state in Remark \ref{rem:isom}, one can construct algorithms for the cases (N2) and (Dege) in similar ways to (N1).
Given a set $\mathcal{P}$ of irreducible cubic forms in $K [x,y,z,w]$, we also give an algorithm to compute a subset $\mathcal{P}^{\prime} \subset \mathcal{P}$ such that $V (Q, P_1)$ and $V( Q, P_2)$ are not isomorphic over $K$ for all $P_1, P_2 \in \mathcal{P}^{\prime}$ with $P_1 \neq P_2$.


We consider the case (N1), that is, $Q = 2 x w + 2 y z$ with
\[
\varphi = 
 \begin{pmatrix}
0 & 0 & 0 & 1\\
0 & 0 & 1 & 0\\
0 & 1 & 0 & 0\\
1 & 0 & 0 & 0
\end{pmatrix}.
 \]
As in Section \ref{SectionReduction}, put
\[
\gT := \{\diag(a,b,b^{-1},a^{-1}) : a,b\in K^\times\}, \quad
\tilde \gT := \{\diag(a,b,cb^{-1},ca^{-1}) : a,b,c\in K^\times\},
\]
\[
U_1(a) := 
\left. \begin{pmatrix}1&a&0&0\\0&1&0&0\\0&0&1&-a\\0&0&0&1\end{pmatrix} \right.
,\quad 
U_2 (b) := \left. \begin{pmatrix}1&0&b&0\\0&1&0&-b\\0&0&1&0\\0&0&0&1\end{pmatrix} \right.
, \quad
\gU := \{ U_1 (a) U_2(b) : a,b\in K\},
\]
\[
\gA := \left\{1_4, 
\begin{pmatrix}
1&0&0&0\\
0&0&1&0\\
0&1&0&0\\
0&0&0&1
\end{pmatrix}\right\},\quad
s_1 := \begin{pmatrix}
0&1&0&0\\
1&0&0&0\\
0&0&0&1\\
0&0&1&0
\end{pmatrix},\quad
s_2 := \begin{pmatrix}
0&0&1&0\\
0&0&0&1\\
1&0&0&0\\
0&1&0&0
\end{pmatrix}.
\]
Let $\gW:=\{1_4, s_1, s_2, s_1s_2\}$.
Put $\gB := \gA\gT\gU$ and $\tilde \gB := \gA\tilde \gT \gU$.
Recall from Section \ref{SectionReduction} that we have
\[
\gO_\varphi(K)=\gB\gW\gU \quad\text{and}\quad \tilde\gO_\varphi(K)=\tilde \gB \gW \gU.
\]
Given irreducible cubic forms $P_1$ and $P_2$ in $K [x,y,z,w]$, we give an algorithm for testing whether $V (Q, P_1)$ and $V(Q, P_2)$ are isomorphic over $K$ or not.
The correctness of this algorithm is straightforward from its construction. 

\paragraph{Isomorphism Testing Algorithm:}
With notation as above, conduct the following procedures for the inputs $P_1$, $P_2$ and $K=\mathbb{F}_q$:
\begin{description}
	\item[{\rm (1)}] Let $t_1, \ldots, t_7$ and $\lambda$ be indeterminates.
	\item[{\rm (2)}] For each $M_{\mathrm{A}} \in \mathrm{A}$ and $M_{\mathrm{W}} \in \mathrm{W}$, we proceed the following four steps:
	\begin{description}
		\item[{\rm (a)}] Put $M_{\tilde{\mathrm{T}}} := \mathrm{diag} (t_1, t_2, t_3 t_2^{-1}, t_3 t_1^{-1}) \in \tilde{\rm T}$, and compute
		\[
		g: = M_{\mathrm{A}} \cdot M_{\tilde{\mathrm{T}}} \cdot U_1 (t_4) \cdot U_2 (t_5) \cdot M_{\mathrm{W}} \cdot U_1 ( t_6 ) \cdot U_2 (t_7) 
		\]
		\item[{\rm (b)}] Construct a multivariate system from the equation
		\[
		g \cdot P_1 \equiv \lambda P_2 \mbox{ mod } Q 
		\]
		together with
		\[
		t_i^{q-1} = 1 \mbox{ for }1 \leq i \leq 3, \quad t_j^q = t_j \mbox{ for } 4 \leq j \leq 7, \mbox{ and} \quad \lambda^{q-1} = 1.
		\]
		Let $\mathcal{S} \subset K [t_1, t_2, t_3, t_4, t_5, t_6, t_7, \lambda ]$ be the set of defining polynomials for the system.
		\item[{\rm (b)}] Compute the reduced Gr\"{o}bner basis $G$ for $\langle S \rangle \subset K [t_1, t_2, t_3, t_4, t_5, t_6, t_7, \lambda ]$ with respect to some term order.
		For computing a Gr\"{o}bner basis, we use known algorithms, e.g., $F_4$.
		\item[{\rm (c)}] If $\sharp G \neq 1$, return ``\textbf{ISOMORPHIC}'', which means that $V (Q, P_1)$ and $V( Q, P_2)$ are isomorphic over $K=\mathbb{F}_q$.
	\end{description}
If the multivariate systems have no solution over $K=\mathbb{F}_q$, i.e., $\sharp G =1$ for all $M_{\mathrm{A}} \in \mathrm{A}$ and $M_{\mathrm{W}} \in \mathrm{W}$, then return ``\textbf{NOT ISOMORPHIC}''.
In this case $V (Q, P_1)$ and $V( Q, P_2)$ are not isomorphic over $K=\mathbb{F}_q$. 
\end{description}
In Algorithm \ref{alg:isomorphic_tilde} of Appendix \ref{sec:code}, we also give a pseudocode for Isomorphism Testing Algorithm.

Next, we give an algorithm for computing isomorphism classes.
Given a family $\mathcal{P}= (P_i)_{i=1}^t$ of irreducible cubics in $K [x,y,z,w]$, the following algorithm (Collecting Isomorphism Classes Algorithm) computes a subset $\mathcal{P}^{\prime} \subset \mathcal{P}$ such that $P_1$ and $P_2$ are not isomorphic over $K$ for all $P_1, P_2 \in \mathcal{P}^{\prime}$ with $P_1 \neq P_2$.

\paragraph{Collecting Isomorphism Classes Algorithm:}
With notation as above, conduct the following procedures for the inputs $\mathcal{P}= (P_i)_{i=1}^t$ and $K=\mathbb{F}_q$:
\begin{description}
	\item[{\rm (1)}] Put $\mathcal{P}^{\prime}:= \emptyset$, and let $FlagList1$ be a sequence of $t$ zeros, say $FlagList1 := ( 0 )_{i=1}^t$.
	\item[{\rm (2)}] For $i=1$ to $t-1$, we proceed the following two steps if $FlagList1[i] = 0$:
	\begin{description}
		\item[{\rm (2-1)}] Replace $\mathcal{P}^{\prime}$ by $\mathcal{P}^{\prime} \cup \{ P_i \}$.
		\item[{\rm (2-2)}] For each $i+1 \leq j \leq t$, test whether $P_i$ and $P_j$ are isomorphic over $\mathbb{F}_q$ or not by Isomorphism Testing Algorithm (or its pseudocode Algorithm \ref{alg:isomorphic_tilde}).
		For a technical reason (Remark \ref{rem:isom} (1)), not regard $t_3$ as an indeterminate here, but take $t_3 =1$ in Isomorphism Testing Algorithm.
		If Isomorphism Testing Algorithm outputs ``\textbf{ISOMORPHIC}'', replace by $1$ the $j$-th entry of $FlagList1$, say $FlagList1[j]:=1$.
	\end{description}
	\item[{\rm (3)}] Put $\mathcal{P}^{\prime \prime}:= \emptyset$, and let $FlagList2$ be a sequence of $\sharp \mathcal{P}^{\prime}$ zeros, say $FlagList2 := ( 0 )_{i=1}^{\sharp \mathcal{P}^{\prime}}$.
	\item[{\rm (4)}] For $i=1$ to $\sharp \mathcal{P}^{\prime}-1$, we proceed the following two steps if $FlagList2[i] = 0$:
	\begin{description}
		\item[{\rm (4-1)}] Replace $\mathcal{P}^{\prime \prime}$ by $\mathcal{P}^{\prime \prime} \cup \{ \mathcal{P}^{\prime}[i] \}$.
		\item[{\rm (4-2)}] For each $i+1 \leq j \leq \sharp \mathcal{P}^{\prime}$, test whether $\mathcal{P}^{\prime} [i]$ and $\mathcal{P}^{\prime} [j]$ are isomorphic over $\mathbb{F}_q$ or not by Isomorphism Testing Algorithm (or its pseudocode Algorithm \ref{alg:isomorphic_tilde}).
		If Isomorphism Testing Algorithm outputs ``\textbf{ISOMORPHIC}'', replace by $1$ the $j$-th entry of $FlagList2$, say $FlagList2[j]:=1$.
	\end{description}
	\item[{\rm (5)}] Return $\mathcal{P}^{\prime \prime}$.
	This $\mathcal{P}^{\prime \prime}$ has the property that $P$ and $P^{\prime}$ are not isomorphic over $K$ for all $P, P^{\prime} \in \mathcal{P}^{\prime \prime}$ with $P \neq P^{\prime}$.
\end{description}
In Algorithm \ref{alg:isomorphic_family} of Appendix \ref{sec:code}, we also give a pseudocode for Collecting Isomorphism Classes Algorithm.

\begin{rem}\label{rem:isom}
\begin{enumerate}
\item In Collecting Isomorphism Classes Algorithm, we first reduce the number of candidates of the isomorphism classes.
More concretely, for each $P_i$, we first remove $P_j$ with $j \geq i+1$ such that $V (Q, P_i)$ and $V (Q, P_j)$ are isomorphic over $K$ via some element of $\mathrm{O}_{\varphi} (K)$.
After that, we determine the isomorphism classes by elements of $\tilde{\mathrm{O}}_{\varphi} (K)$.
\item Using the Bruhat decompositions given in Section \ref{SectionReduction}, one can also construct an algorithm for each of (N2) and (Dege) as a variant of that for (N1).
Let us omit to give algorithms for (N2) and (Dege) in this paper.
\end{enumerate}
\end{rem}

\if 0
\subsubsection{Case of (N2)}

Next, we consider the case of (N2), that is, $Q = 2 x w + y^2 - \epsilon z^2$ with
\[
\varphi = 
 \begin{pmatrix}
0 & 0 & 0 & 1\\
0 & 1 & 0 & 0\\
0 & 0 & -\epsilon & 0\\
1 & 0 & 0 & 0
\end{pmatrix},
 \]
where $\epsilon\in K^\times\smallsetminus(K^\times)^2$.
As in Section \ref{??}, put
$\gH :=\{ H(a):= \diag(a,1,1,a^{-1}) \mid a \in K^\times\}, \quad
\gA:=\{1_4, \diag(1,1,-1,1)\}
$,
\[
U_1 (a) :=
\begin{pmatrix}
1 & a & 0 & -a^2/2\\
0 & 1 & 0 & -a\\
0 & 0 & 1 & 0\\
0 & 0 & 0 & 1
\end{pmatrix}, \quad
U_2 (b) :=
\begin{pmatrix}
1 & 0 & b & b^2/(2\epsilon)\\
0 & 1 & 0 & 0\\
0 & 0 & 1 & b/\epsilon\\
0 & 0 & 0 & 1
\end{pmatrix}, \quad
\gU := \{ U_1 (a) U_2(b) \mid a,b\in K\},
\]
\[
\tilde \gC :=\left\{\left.
R(a,b):=\begin{pmatrix}
1 & 0 & 0 & 0\\
0 & a & \epsilon b & 0\\
0 & b & a & 0\\
0 & 0 & 0 & a^2-\epsilon b^2
\end{pmatrix} \right| a,b\in K\right\},\qquad  
\gW :=\left\{1_4, \begin{pmatrix}
0 & 0 & 0 & 1\\
0 & 1 & 0 & 0\\
0 & 0 & -1 & 0\\
1 & 0 & 0 & 0
\end{pmatrix}\right\}.
\]
Let $\gC := \{R(a,b)\in \tilde \gC \mid a^2-\epsilon b^2 = 1\}$
and put $\gT :=\gH\gC$, $\tilde \gT := \gH\tilde \gC$,
$\gB :=\gA\gT\gU$ and $\tilde \gB := \gA\tilde{\gT}\gU$.
Recall from Section \ref{??} that we have
\[
\gO_\varphi(K) = \gB \gW \gU
\quad\text{and}\quad
\tilde \gO_\varphi(K) = \tilde \gB \gW \gU.
\]
With notation as above, we give an algorithm (Algorithm \ref{alg:isomorphic_N2}) for determining whether two curves of genus $4$ are isomorphic over $K$ or not.

\begin{algorithm}[htb] %
\caption{$\texttt{IsIsomorphicN2} ( P_1, P_2, q )$}
\label{alg:isomorphic_N2}
\begin{algorithmic}[1]
\REQUIRE{Two cubic forms $P_1$ and $P_2$ in $\mathbb{F}_q [x,y,z,w]$, and $q = p^s$ a power of a prime $p$}
\ENSURE{``\textbf{ISOMORPHIC}'' or ``\textbf{NOT ISOMORPHIC}''}
\STATE $IsomorphicFlag$ $\leftarrow$ $0$
\FOR{$M_{\rm A} \in \mathrm{A}$}
	\FOR{$M_{\rm W} \in \mathrm{W}$}
		\STATE $g$ $\leftarrow$ $M_{\rm A} \cdot H (t_1) \cdot R (t_2, t_3) \cdot U_1 (t_4) \cdot U_2 (t_5) \cdot M_{\rm W} \cdot U_1 (t_6) \cdot U_2 (t_7)$
		\STATE Construct a system of algebraic equations with indeterminates $t_i$'s and $\lambda$ by comparing coefficients in $g \cdot P_1 = \lambda P_2 + Q$
		\STATE $P_3$ $\leftarrow$ $g \cdot P_1 - \lambda P_2 - \lambda Q$
	 	\STATE $\mathcal{S}$ $\leftarrow$ $\{ t_1^{q-1} - 1\} \cup \{ t_i^q - t : 2 \leq i \leq 7 \} \cup \{ \lambda^{q-1} - 1, (t_2^2 - \epsilon t_3^2)^{q-1} - 1 \}$
	 	\STATE $\mathrm{Mon}(P_3)$ $\leftarrow$ the set of the monomials of $P_3$
		\FOR{$x^k y^{\ell} z^m w^n \in \mathrm{Mon}(P_3)$}
			\STATE $f  (t_1, t_2, t_3, t_4, t_5, t_6, t_7, \lambda)$ $\leftarrow$ the coefficient of $x^k y^{\ell} z^m w^n$ in $P_3$
			\STATE $\mathcal{S}$ $\leftarrow$ $\mathcal{S} \cup \{ f (t_1, t_2, t_3, t_4, t_5, t_6, t_7, \lambda) \}$
		\ENDFOR
		\STATE $G$ $\leftarrow$ the reduced Gr\"{o}bner basis for $\langle \mathcal{S} \rangle \subset \mathbb{F}_q [ t_1, t_2, t_3, t_4, t_5, t_6, t_7, \lambda ]$
		\IF{$\sharp G \neq 1$}
			\STATE $IsomorphicFlag$ $\leftarrow$ $1$, \textbf{break} $M_{\rm W}$ and $M_{\rm A}$
		\ENDIF
	\ENDFOR
\ENDFOR
\IF{$IsomorphicFlag = 0$}
	\RETURN ``\textbf{NOT ISOMORPHIC}''
\ELSE
	\RETURN ``\textbf{ISOMORPHIC}''
\ENDIF
\end{algorithmic}
\end{algorithm}
The correctness of Algorithm \ref{alg:isomorphic_N2} is straightforward from its construction. 

\begin{prop}\label{prop:isomorphicN2}
If Algorithm \ref{alg:isomorphic_N2} returns {\rm \textbf{NOT ISOMORPHIC}}, then $P_1$ and $P_2$ are not isomorphic over $K$.
If it outputs {\rm \textbf{ISOMORPHIC}}, then $P_1$ and $P_2$ are isomorphic over $K$.
\end{prop}

Given a family $\mathcal{P}$ of cubics in $K [x,y,z,w]$, we give an algorithm (Algorithm \ref{alg:isomprphic_family_N2}) to compute a subset $\mathcal{P}^{\prime}$ such that
$P_1$ and $P_2$ are not isomorphic over $K$ for all $P_1, P_2 \in \mathcal{P}^{\prime}$ with $P_1 \neq P_2$.

\begin{algorithm}[h] %
\caption{$\texttt{NotIsomorphicListN2} ( \mathcal{P}, q )$}
\label{alg:isomprphic_family_N2}
\begin{algorithmic}[1]
\REQUIRE{A list $\mathcal{P} = ( P_1, \ldots , P_t )$ of cubics in $\mathbb{F}_q [x,y,z,w]$, and $q = p^s$ a power of a prime $p$}
\ENSURE{A family $\mathcal{P}^{\prime \prime}$ of cubics in $\mathbb{F}_q [x,y,z,w]$}
\STATE $\mathcal{P}^{\prime}$ $\leftarrow$ $\emptyset$, $FlagList1$ $\leftarrow$ $( 0 )_{i=1}^t$
\FOR{$i=1$ \TO $t-1$}
	\IF{$FlagList1[i] = 0$}
		\STATE $\mathcal{P}^{\prime}$ $\leftarrow$ $\mathcal{P}^{\prime} \cup \{ P_i \}$
		\FOR{$j=i+1$ \TO $t$}
			\STATE $IsomorphicFlag$ $\leftarrow$ $0$
			\FOR{$M_{\rm A} \in \mathrm{A}$}
				\FOR{$M_{\rm W} \in \mathrm{W}$}
					\STATE $g$ $\leftarrow$ $M_{\rm A} \cdot H (s_1) \cdot R (s_2, s_3) \cdot U_1 (s_4) \cdot U_2 (s_5) \cdot M_{\rm W} \cdot U_1 (s_6) \cdot U_2 (s_7)$
					\STATE $P_{i,j}$ $\leftarrow$ $g \cdot P_i - \lambda P_j  -  Q$
					\STATE $\mathcal{S}$ $\leftarrow$ $\{ s_1^{q-1} - 1\} \cup \{ s_i^q - s : 2 \leq i \leq 7 \} \cup \{ \lambda^{q-1} - 1,  s_2^2 - \epsilon s_3^2 - 1 \}$
					\STATE $\mathrm{Mon}(P_{i,j})$ $\leftarrow$ the set of the monomials of $P_{i,j}$
					\FOR{$x^k y^{\ell} z^m w^n \in \mathrm{Mon}(P_{i,j})$}
						\STATE $f  (s_1, s_2, s_3, s_4, s_5, s_6, s_7, \lambda)$ $\leftarrow$ the coefficient of $x^k y^{\ell} z^m w^n$ in $P_{i,j}$
						\STATE $\mathcal{S}$ $\leftarrow$ $\mathcal{S} \cup \{ f (s_1, s_2, s_3, s_4, s_5, s_6, s_7, \lambda) \}$
					\ENDFOR
					\STATE $G$ $\leftarrow$ the reduced Gr\"{o}bner basis for $\langle \mathcal{S} \rangle \subset \mathbb{F}_q [ s_1, s_2, s_3, s_4, s_5, s_6, s_7, \lambda ]$
					\IF{$\sharp G \neq 1$}
						\STATE $IsomorphicFlag$ $\leftarrow$ $1$, \textbf{break} $M_{\rm W}$ and $M_{\rm A}$
					\ENDIF
				\ENDFOR
			\ENDFOR
			\IF{$IsomorphicFlag = 1$}
				\STATE $FlagList1[j]$ $\leftarrow$ $1$
			\ENDIF
		\ENDFOR
	\ENDIF
\ENDFOR
\STATE $\mathcal{P}^{\prime \prime}$ $\leftarrow$ $\emptyset$, $FlagList2$ $\leftarrow$ $( 0 )_{i=1}^{\sharp \mathcal{P}^{\prime}}$
\FOR{$i=1$ \TO $\sharp \mathcal{P}^{\prime}-1$}
	\IF{$FlagList2[i] = 0$}
		\STATE $\mathcal{P}^{\prime \prime}$ $\leftarrow$ $\mathcal{P}^{\prime \prime} \cup \{ \mathcal{P}^{\prime} [i] \}$
		\FOR{$j=i+1$ \TO $t$}
			\IF{$\texttt{IsIsomorphicN2} ( \mathcal{P}^{\prime} [i], \mathcal{P}^{\prime} [j], q )$ returns \textbf{ISOMORPHIC}}
				\STATE $FlagList2[j]$ $\leftarrow$ $1$
			\ENDIF
		\ENDFOR
	\ENDIF
\ENDFOR
\RETURN $\mathcal{P}^{\prime \prime}$
\end{algorithmic}
\end{algorithm}

\subsubsection{Degenerate case}

We consider the degenerate case, that is, $Q = 2 y w + z^2$ with
\[
\varphi = \begin{pmatrix}
0 & 0 & 0 & 0\\
0 & 0 & 0 & 1\\
0 & 0 & 1 & 0\\
0 & 1 & 0 & 0
\end{pmatrix}.
\]
As in Section \ref{??}, 
Put $\gA:=\{1_4, \diag(1,1,-1,1)\}$,
\[
\gT:=\left\{\left. T(a):=\begin{pmatrix}
1&0&0&0\\
0&a&0&0\\
0&0& 1&0\\
0&0&0&a^{-1}\end{pmatrix} \right| a \in K^\times\right\},\quad
\gU := \left\{\left. U(a):=\begin{pmatrix}
1&0&0&0\\
0&1&a&a^2(2\epsilon)^{-1}\\
0&0&1&a \epsilon^{-1}\\
0&0&0&1\end{pmatrix} \right| a \in K\right\},
\]
\[
s:=\begin{pmatrix}
1&0&0&0\\
0&0&0&1\\
0&0&1&0\\
0&1&0&0\end{pmatrix},\quad
\gV=\left\{ V(a,b,c,d) := \left. \begin{pmatrix}
a&b&c&d\\
0&1&0&0\\
0&0&1&0\\
0&0&0&1\end{pmatrix} \right| a\in K^\times \text{ and } b, c, d\in K\right\}.
\]
Let $\tilde \gT := \{\diag(1,b,b,b) \mid b\in K^\times\} \gT$ and set $\gB:=\gA \gT \gU$ and $\tilde \gB := \gA \tilde \gT \gU$.
Recall from Section \ref{??} that we have
\[
\gO_\varphi(K) = (\gB \sqcup \gB s \gU) \gV
\quad\text{and}\quad
\tilde \gO_\varphi(K) = (\tilde \gB \sqcup \tilde \gB s \gU) \gV .
\]
With notation as above, we give an algorithm (Algorithm \ref{alg:isomorphic_Dege}) for determining whether two curves of genus $4$ are isomorphic over $K$ or not.

\begin{algorithm}[htb] %
\caption{$\texttt{IsIsomorphicDege} ( P_1, P_2, q )$}
\label{alg:isomorphic_Dege}
\begin{algorithmic}[1]
\REQUIRE{Two cubic forms $P_1$ and $P_2$ in $\mathbb{F}_q [x,y,z,w]$, and $q = p^s$ a power of a prime $p$}
\ENSURE{``\textbf{ISOMORPHIC}'' or ``\textbf{NOT ISOMORPHIC}''}
\STATE $IsomorphicFlag$ $\leftarrow$ $0$
\FOR{$M_{\rm A} \in \mathrm{A}$}
	\FOR{$M_{s} \in \{ {\rm 1}_4, s \}$}
		\IF{$M_{s} = {\rm 1}_4$}
			\STATE $t_4$ $\leftarrow$ $1$, $t_5$ $\leftarrow$ $1$
		\ENDIF
		\STATE $g$ $\leftarrow$ $M_{\rm A} \cdot \tilde{T}(t_1,t_2) \cdot U (t_3) \cdot M_s \cdot U (t_4) \cdot V (t_5, 0,0,0)$
		\STATE Construct a system of algebraic equations with indeterminates $t_i$'s and $\lambda$ by comparing coefficients in $g \cdot P_1 = \lambda P_2 + Q$
		\STATE $P_3$ $\leftarrow$ $g \cdot P_1 - \lambda P_2 - \lambda Q$
	 	\STATE $\mathcal{S}$ $\leftarrow$ $\{ t_i^{q-1} - 1 : 1 \leq i \leq 2 \} \cup \{ t_i^q - t_i : 3 \leq i \leq 4 \} \cup \{ t_5^{q-1} - 1, \lambda^{q-1} - 1 \}$
	 	\STATE $\mathrm{Mon}(P_3)$ $\leftarrow$ the set of the monomials of $P_3$
		\FOR{$x^k y^{\ell} z^m w^n \in \mathrm{Mon}(P_3)$}
			\STATE $f  (t_1, t_2, t_3, t_4, t_5, \lambda)$ $\leftarrow$ the coefficient of $x^k y^{\ell} z^m w^n$ in $P_3$
			\STATE $\mathcal{S}$ $\leftarrow$ $\mathcal{S} \cup \{ f (t_1, t_2, t_3, t_4, t_5, \lambda) \}$
		\ENDFOR
		\STATE $G$ $\leftarrow$ the reduced Gr\"{o}bner basis for $\langle \mathcal{S} \rangle \subset \mathbb{F}_q [ t_1, t_2, t_3, t_4, t_5, \lambda ]$
		\IF{$\sharp G \neq 1$}
			\STATE $IsomorphicFlag$ $\leftarrow$ $1$, \textbf{break} $M_{\rm W}$ and $M_{s}$
		\ENDIF
	\ENDFOR
\ENDFOR
\IF{$IsomorphicFlag = 0$}
	\RETURN ``\textbf{NOT ISOMORPHIC}''
\ELSE
	\RETURN ``\textbf{ISOMORPHIC}''
\ENDIF
\end{algorithmic}
\end{algorithm}
The correctness of Algorithm \ref{alg:isomorphic_Dege} is straightforward from its construction. 

\begin{prop}\label{prop:isomorphicDege}
If Algorithm \ref{alg:isomorphic_Dege} returns {\rm \textbf{NOT ISOMORPHIC}}, then $P_1$ and $P_2$ are not isomorphic over $K$.
If it outputs {\rm \textbf{ISOMORPHIC}}, then $P_1$ and $P_2$ are isomorphic over $K$.
\end{prop}

Given a family $\mathcal{P}$ of cubics in $K [x,y,z,w]$, we give an algorithm (Algorithm \ref{alg:isomprphic_family_Dege}) to compute a subset $\mathcal{P}^{\prime}$ such that
$P_1$ and $P_2$ are not isomorphic over $K$ for all $P_1, P_2 \in \mathcal{P}^{\prime}$ with $P_1 \neq P_2$.

\begin{algorithm}[h] %
\caption{$\texttt{NotIsomorphicListDege} ( \mathcal{P}, q )$}
\label{alg:isomprphic_family_Dege}
\begin{algorithmic}[1]
\REQUIRE{A list $\mathcal{P} = ( P_1, \ldots , P_t )$ of cubics in $\mathbb{F}_q [x,y,z,w]$, and $q = p^s$ a power of a prime $p$}
\ENSURE{A family $\mathcal{P}^{\prime \prime}$ of cubics in $\mathbb{F}_q [x,y,z,w]$}
\STATE $\mathcal{P}^{\prime}$ $\leftarrow$ $\emptyset$, $FlagList1$ $\leftarrow$ $( 0 )_{i=1}^t$
\FOR{$i=1$ \TO $t-1$}
	\IF{$FlagList1[i] = 0$}
		\STATE $\mathcal{P}^{\prime}$ $\leftarrow$ $\mathcal{P}^{\prime} \cup \{ P_i \}$
		\FOR{$j=i+1$ \TO $t$}
			\STATE $IsomorphicFlag$ $\leftarrow$ $0$
			\FOR{$M_{\rm A} \in \mathrm{A}$}
				\FOR{$M_{s} \in \{ {\rm 1}_4, s \}$}
					\IF{$M_{s} = {\rm 1}_4$}
						\STATE $s_4$ $\leftarrow$ $1$, $s_5$ $\leftarrow$ $1$
					\ENDIF
					\STATE $g$ $\leftarrow$ $M_{\rm A} \cdot \tilde{T}(s_1,s_2) \cdot U (s_3) \cdot M_s \cdot U (s_4) \cdot V (s_5, 0,0,0)$
					\STATE $P_{i,j}$ $\leftarrow$ $g \cdot P_i - \lambda P_j  -  Q$
					\STATE $\mathcal{S}$ $\leftarrow$ $\{ s_i^{q-1} - 1 : 1 \leq i \leq 2 \} \cup \{ s_i^q - s_i : 3 \leq i \leq 4 \} \cup \{ s_5^{q-1} - 1, \lambda^{q-1} - 1 \}$
					\STATE $\mathrm{Mon}(P_{i,j})$ $\leftarrow$ the set of the monomials of $P_{i,j}$
					\FOR{$x^k y^{\ell} z^m w^n \in \mathrm{Mon}(P_{i,j})$}
						\STATE $f  (s_1, s_2, s_3, s_4, s_5, \lambda)$ $\leftarrow$ the coefficient of $x^k y^{\ell} z^m w^n$ in $P_{i,j}$
						\STATE $\mathcal{S}$ $\leftarrow$ $\mathcal{S} \cup \{ f (s_1, s_2, s_3, s_4, s_5, \lambda) \}$
					\ENDFOR
					\STATE $G$ $\leftarrow$ the reduced Gr\"{o}bner basis for $\langle \mathcal{S} \rangle \subset \mathbb{F}_q [ s_1, s_2, s_3, s_4, s_5, \lambda ]$
					\IF{$\sharp G \neq 1$}
						\STATE $IsomorphicFlag$ $\leftarrow$ $1$, \textbf{break} $M_{\rm W}$ and $M_{s}$
					\ENDIF
				\ENDFOR
			\ENDFOR
			\IF{$IsomorphicFlag = 1$}
				\STATE $FlagList1[j]$ $\leftarrow$ $1$
			\ENDIF
		\ENDFOR
	\ENDIF
\ENDFOR
\STATE $\mathcal{P}^{\prime \prime}$ $\leftarrow$ $\emptyset$, $FlagList2$ $\leftarrow$ $( 0 )_{i=1}^{\sharp \mathcal{P}^{\prime}}$
\FOR{$i=1$ \TO $\sharp \mathcal{P}^{\prime}-1$}
	\IF{$FlagList2[i] = 0$}
		\STATE $\mathcal{P}^{\prime \prime}$ $\leftarrow$ $\mathcal{P}^{\prime \prime} \cup \{ \mathcal{P}^{\prime} [i] \}$
		\FOR{$j=i+1$ \TO $t$}
			\IF{$\texttt{IsIsomorphicDege} ( \mathcal{P}^{\prime} [i], \mathcal{P}^{\prime} [j], q )$ returns \textbf{ISOMORPHIC}}
				\STATE $FlagList2[j]$ $\leftarrow$ $1$
			\ENDIF
		\ENDFOR
	\ENDIF
\ENDFOR
\RETURN $\mathcal{P}^{\prime \prime}$
\end{algorithmic}
\end{algorithm}
\fi

\subsection{Computational parts of our proofs of the main theorems}\label{subsec:comp_result}

In this subsection, we state computational results for our proofs of the main theorems.
Our computational results are shown by executing algorithms given in Sections \ref{subsec:algorithm} and \ref{subsec:isom}.
We implemented the algorithms in Magma~\cite{Magma}, \cite{MagmaHP}, a computer algebra system.
Details on the implementation will be described in Section \ref{subsec:imple}.

\subsubsection{Degenerate case for $q=5$}\label{subsubsec:comp_dege_q5}

\begin{prop}\label{prop:Degenerate_q5}
Consider the quadratic form $Q = 2 y w + z^2 \in \mathbb{F}_5 [x,y,z,w]$ and cubic forms $P \in \mathbb{F}_5 [x,y,z,w]$ of the form
\begin{enumerate}
\item[\rm (i)]
\begin{equation}
\begin{split}
P =& a_0 x^3 + ( a_1 y^2 + a_2 z^2 + a_3 w^2 + a_4 y z  + a_5 z w ) x  \\
& + a_6 y^3 + a_7 z^3 + a_8 w^3 + a_9 y z^2 + b_1 z^2 w + b_2 z w^2,
\end{split}\label{eq:Degeq5(1)}
\end{equation}
for $a_0, a_6 \in \mathbb{F}_{5}^{\times}$ and $b_1, b_2 \in \{ 0, 1 \}$, or
\item[\rm (ii)] 
\begin{equation}
P = x^3 + (a_1 y^2 + a_2 z^2 + a_3 w^2 + a_4 yz + b_1 zw)x + y^2z + zw^2 \label{eq:Degeq5(2)}
\end{equation}
for $a_i \in K= \mathbb{F}_5$ and $b_1 \in\{ 0,1  \}$.
\end{enumerate}
Then a cubic form $P$ of the form \eqref{eq:Degeq5(1)} or \eqref{eq:Degeq5(2)} such that $V(P,Q)$ is superspecial is one of
\begin{eqnarray}
P_1 &=&  x^3+y^3+w^3, \nonumber \\
P_2 &=& x^3+y^3+ 2 w^3, \nonumber \\
P_3 &=& x^3+y^3+w^3+zw^2, \nonumber \\
P_4 &=& x^3+y^3+2w^3+zw^2, \nonumber \\
P_5 &=& x^3+y^3+3w^3+zw^2, \nonumber \\
P_6 &=& x^3+y^3+4w^3+zw^2, \nonumber \\
P_7 &=& x^3+y^2z+zw^2 \nonumber 
\end{eqnarray}
up to isomorphism over $\mathbb{F}_{5}$.
\end{prop}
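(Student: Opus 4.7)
The plan is to run the Modified Version of Main Algorithm described in Section \ref{subsec:algorithm} on the two parametrized families \eqref{eq:Degeq5(1)} and \eqref{eq:Degeq5(2)}, using Proposition \ref{cor:HW} as the superspeciality criterion, Lemma \ref{lem:non-sing} as the non-singularity test, and Collecting Isomorphism Classes Algorithm from Section \ref{subsec:isom} to collapse $\mathbb{F}_5$-isomorphism classes coming from the remaining degenerate-case symmetries.

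First I would treat form \eqref{eq:Degeq5(1)}. The free parameters are $(a_0,\ldots,a_9,b_1,b_2)$ with $a_0,a_6\in\F_5^\times$ and $b_1,b_2\in\{0,1\}$, so a naive brute-force over $\F_5$ has at most $5^{10}\cdot 4^2\approx 1.6\times 10^8$ tuples. I would instead apply the double hybrid method: split the coefficients into an outer block $(a_{k_1'},\ldots,a_{k_{t_1}'})$ to be enumerated, a middle block $(a_{j_1},\ldots,a_{j_{t_2}})$ to be enumerated after computing $(PQ)^{p-1}=(PQ)^4$ symbolically in the remaining $s_1$ coefficients, and an inner block $(a_{i_1},\ldots,a_{i_{s_2}})$ left as indeterminates for the Gr\"obner basis solver. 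For each outer-middle tuple I would extract the $16$ coefficients listed in Proposition \ref{cor:HW} from $(PQ)^4$, solve the resulting system over $\F_5$, and for every solution substitute back into $P$ and discard tuples for which $V(P,Q)$ fails the non-singularity test of Lemma \ref{lem:non-sing}. This yields a finite list $\mathcal{L}_1$ of superspecial $P$'s.

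Next I would do exactly the same for form \eqref{eq:Degeq5(2)}, which has only $5$ free parameters $(a_1,\ldots,a_4,b_1)$; here the enumeration is small enough that one can largely brute-force it, again computing the $16$ coefficients of $(PQ)^4$ and testing non-singularity, to obtain a list $\mathcal{L}_2$.

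Finally I would feed $\mathcal{L}_1\cup\mathcal{L}_2$ into Collecting Isomorphism Classes Algorithm (the degenerate-case variant of Isomorphism Testing Algorithm, based on the Bruhat decomposition $\tilde\gO_\varphi(K)=(\tilde\gB\sqcup\tilde\gB s\gU)\gV$ from Section \ref{degenerate case}), which retains one representative per $\F_5$-isomorphism class, and verify that exactly the seven cubics $P_1,\ldots,P_7$ remain. The main obstacle is not conceptual but computational: controlling the total cost so that the combination of symbolic multiplication $(PQ)^4$ over $\F_5[a_{i_1},\ldots,a_{i_{s_1}}]$ and the Gr\"obner basis solving step is tractable. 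This is exactly what the choice of the split $(s_1,s_2)$ in the double hybrid method is designed to address, and the heuristic choice of $s_1,s_2$ will have to be tuned experimentally as indicated in Section \ref{subsec:algorithm}.
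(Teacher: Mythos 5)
Your proposal follows essentially the same approach as the paper: enumerate the two parametrized families, compute $(PQ)^{p-1}$ symbolically in a chosen block of coefficients, extract the $16$ coefficients of Proposition \ref{cor:HW}, solve the resulting systems, filter by the non-singularity test, and then collapse $\mathbb{F}_5$-isomorphism classes with the degenerate-case variant of Collecting Isomorphism Classes Algorithm. The only difference is a tuning detail: for $q=5$ the paper actually takes $s_1=s_2$ (eight indeterminates $a_1,\ldots,a_5,a_7,a_8,a_9$ for (i), and all four $a_1,\ldots,a_4$ for (ii)) with the outer loop only over $a_0,a_6,b_1,b_2$, so no separate middle block is needed at this small field size.
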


\begin{proof}
\begin{description}
\item[{\rm (i)}]
Put $t = 10$, $u = 2$ and
\begin{eqnarray}
\{ p_1, \ldots , p_t \} & = & \{ x^3, x y^2, x z^2, x w^2, x y z, x z w, y^3, z^3, w^3, y z^2 \}, \nonumber \\
\{ q_1, \ldots , q_u \} & = & \{ z^2 w, z w^2 \}. \nonumber 
\end{eqnarray}
For each $( b_1, b_2 ) \in \{ 0, 1 \}^{\oplus 2}$ and $a_0, a_6 \in \mathbb{F}_{5}^{\times}$, we proceed the following three steps:
\begin{description}
	\item[{\rm (1)}] Compute $h:= (P Q)^{p-1}$ over $\mathbb{F}_{5} [a_1, \ldots, a_5, a_7, a_8, a_9]$, where $a_1, \ldots, a_5, a_7, a_8, a_9$ are indeterminates. 
	\item[{\rm (2)}] We regard the $8$ coefficients $a_1, \ldots , a_5$, $a_7$, $a_8$, $a_9$ as indeterminates.
	For solving multivariate systems over $\mathbb{F}_{5} [ a_1, \ldots , a_5, a_7, a_8, a_9 ]$ in the next step, we adopt the graded reverse lexicographic (grevlex) order with
\[
a_{8} \prec a_7 \prec a_9 \prec a_3 \prec a_5 \prec a_2 \prec a_4, \prec a_1,
\]
	whereas for $\mathbb{F}_{5} [ x, y, z, w]$, the grevlex order with $w \prec z \prec y \prec x$ is adopted.
	\item[{\rm (3)}] We proceed the following three steps:
	\begin{description}
		\item[{\rm (a)}] Let $\mathcal{S}$ be the set of the coefficients of the monomials of $\mathcal{M}$ in $h$, where the set $\mathcal{M}$ consists of the $16$ monomials in Proposition \ref{cor:HW}.
		Note that $\mathcal{S} \subset \mathbb{F}_{5} [a_1, \ldots, a_5, a_7, a_8, a_9 ]$.
		\item[{\rm (b)}] Solve the multivariate system $f ( a_1, a_2, a_3, a_4, a_5, a_7, a_8, a_9) = 0$ for all $f \in \mathcal{S}$ over $\mathbb{F}_{5}$ with known algorithms via the Gr\"{o}bner basis computation.
		\item[{\rm (c)}] For each solution $(a_1, a_2, a_3, a_4, a_5, a_7, a_8, a_9 )$ of the above system,  substitute it into unknown coefficients in $P$, and decide whether $C = V ( P, Q)$ is non-singular or not. 
	\end{description}
\end{description}

\item[{\rm (ii) }]
For each $b_1 \in \{ 0, 1 \}$, we proceed the following three steps:
\begin{description}
	\item[{\rm (1)}] Compute $h:= (P Q)^{p-1}$ over $\mathbb{F}_{5} [a_1, a_2, a_3, a_4]$, where $a_1, a_2, a_3, a_4$ are indeterminates. 
	\item[{\rm (2)}] We regard the $4$ coefficients $a_1$, $a_2$, $a_3$, $a_4$ as indeterminates.
	For computing Gr\"{o}bner bases over $\mathbb{F}_{5} [ a_1, a_2, a_3, a_4 ]$, we adopt the grevlex order with
\[
a_{3} \prec a_2 \prec a_4 \prec a_1,
\]
	whereas for $\mathbb{F}_{5} [ x, y, z, w]$, the grevlex order with $w \prec z \prec y \prec x$ is adopted.
	\item[{\rm (3)}] As in the case (i), we enumerate $(a_1, a_2, a_3, a_4)$ such that $C$ is superspecial.
\end{description}
\end{description}
Let $\mathcal{P}$ be the list of cubics $P$ such that $V(P,Q)$ was determined to be superspecial in the above procedures (i) and (ii).
For the inputs $\mathcal{P}$ and $q=5$, we execute a variant of Algorithm \ref{alg:isomorphic_family}.
By the outputs of our computation,
a cubic form $P$ of the form \eqref{eq:Degeq5(1)} or \eqref{eq:Degeq5(2)} such that $V(P,Q)$ is superspecial is one of 
$P_i$ for $1 \leq i \leq 7$, up to isomorphism over $\mathbb{F}_{5}$.
\end{proof}

\subsubsection{Case of (N1) for $q = 11$}\label{subsubsec:comp_N1_q11}

\begin{prop}\label{prop:N1q11}
Consider the quadratic form $Q = 2 x w + 2 y z \in \mathbb{F}_{11}[x,y,z,w]$ and cubic forms $P \in \mathbb{F}_{11}[x,y,z,w]$ of the form
\begin{equation}
\begin{split}
P =& ( y + b_1 z) x^2 + b_2 x z^2 \\
& + a_1 y^3 + a_2 y^2 z + a_3 y z^2 + a_4 z^3 \\
& + ( a_5 y^2  + a_6 y z  + a_7 z^2 ) w + ( a_8  y + a_9 z ) w^2 + a_{10} w^3,
\end{split}\label{eq:N1q11}
\end{equation}
where $a_1, \ldots , a_{10} \in \mathbb{F}_{11}$, $b_1 \in \{ 0, 1, \zeta^{(11)} \}$ and $b_2 \in \{ 0, 1 \}$.
Here $\zeta^{(11)}$ is a primitive element of $\mathbb{F}_{11}$.
Then a cubic form $P$ of the form \eqref{eq:N1q11} such that $V(P,Q)$ is superspecial is one of
\begin{eqnarray}
P_1^{({\rm N1})}&= & x^2 y + x^2 z + 2 y^2 z + 5 y^2 w + 9 y z^2 + y z w + 4 z^3 + 3 z^2 w + 10 z w^2 + w^3, \nonumber \\
P_2^{({\rm N1})}&= & x^2 y + x^2 z + y^3 + y^2 z + 7 y z^2 + 4 y w^2 + 2 z^3 + 9 z w^2, \nonumber \\
P_3^{({\rm N1})}&= & x^2 y + x^2 z + y^3 + 8 y^2 z + 3 y z^2 + 10 y w^2 + 10 z^3 + 10 z w^2, \nonumber \\
P_4^{({\rm N1})}&= & x^2 y + x^2 z + y^3 + 9 y^2 z + 2 y^2 w + 3 y z^2 + 3 y z w + 4 y w^2 + 10 z^3 + 2 z^2 w + 6 z w^2, \nonumber \\
P_5^{({\rm N1})}&= & x^2 y + x^2 z + x z^2 + 10 y^2 w + 9 y z^2 + 9 y w^2 + 8 z^3 + 8 z^2 w + 8 z w^2 + 3 w^3, \nonumber \\
P_6^{({\rm N1})}&= & x^2 y + x^2 z + x z^2 + 9 y^2 z + 5 y^2 w + y z w + 8 y w^2 + 3 z^3 + 9 z^2 w + 2 z w^2 + 5 w^3, \nonumber \\
P_7^{({\rm N1})}&= & x^2 y + x^2 z + x z^2 + 4 y^3 + 2 y^2 z + 10 y^2 w + 3 y z^2 + 8 y z w + 8 y w^2 + 8 z^3 + 7 z^2 w + 7 z w^2 + 4 w^3, \nonumber \\
P_8^{({\rm N1})}&= & x^2 y + x^2 z + x z^2 + 9 y^3 + 6 y^2 z + 5 y^2 w + 8 y z^2 + 5 y z w + 2 y w^2 + z^3 + 2 z^2 w + 7 z w^2 + w^3 \nonumber
\end{eqnarray}
up to isomorphism over $\mathbb{F}_{11}$.
\end{prop}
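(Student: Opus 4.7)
The plan is to apply the Modified Version of Main Algorithm in \cite{KH16} (described in Section \ref{subsec:algorithm}) to the cubic form \eqref{eq:N1q11}, and then reduce the list of outputs to isomorphism classes via Collecting Isomorphism Classes Algorithm (Section \ref{subsec:isom}) specialized to the (N1) symmetric matrix $\varphi$.

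First I would fix the pair $(b_1, b_2) \in \{0, 1, \zeta^{(11)}\} \times \{0, 1\}$ as an outermost brute-force loop, leaving $t = 10$ parameters $a_1, \ldots, a_{10}$. For each fixed $(b_1, b_2)$, I would invoke the double hybrid method: choose $s_1$ coefficients to keep as indeterminates during the computation of $(PQ)^{p-1} = (PQ)^{10}$ over a polynomial ring $\mathbb{F}_{11}[a_{k_1}, \ldots, a_{k_{s_1}}]$, and then choose a smaller set of $s_2$ indeterminates to retain during the Gr\"obner basis step, running through the remaining $s_1 - s_2$ coefficients by brute force. Heuristically, $s_1$ should be taken as large as memory allows so that the expensive multiplication $(PQ)^{10}$ is performed as few times as possible, while $s_2$ is chosen small enough that the multivariate systems derived from Proposition \ref{cor:HW} (the $16$ coefficients listed there) can be solved efficiently. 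Each solution $(a_1, \ldots, a_{10}) \in \mathbb{F}_{11}^{10}$ is then tested for non-singularity of $V(P, Q)$ using Lemma \ref{lem:non-sing}.

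Second, having produced a (necessarily finite) list $\mathcal{P}$ of cubics $P$ such that $V(P, Q)$ is superspecial and non-singular, I would feed $\mathcal{P}$ into Collecting Isomorphism Classes Algorithm for case (N1). This algorithm parametrizes $\tilde{\gO}_{\varphi}(\mathbb{F}_{11}) = \tilde{\gB}\gW\gU$ via the Bruhat decomposition from Section \ref{non-degenerate case}, writing a generic element as $M_A \cdot \mathrm{diag}(t_1, t_2, t_3 t_2^{-1}, t_3 t_1^{-1}) \cdot U_1(t_4) U_2(t_5) \cdot M_W \cdot U_1(t_6) U_2(t_7)$, and for every pair $(P_i, P_j) \in \mathcal{P}$ tests whether the system obtained from $g \cdot P_i \equiv \lambda P_j \pmod{Q}$ (together with $t_i^{q-1} = 1$ or $t_i^q = t_i$ and $\lambda^{q-1} = 1$) admits a solution by computing a reduced Gr\"obner basis. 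Iterating removes all but one representative from each isomorphism class, and by direct verification one finds the eight remaining cubics $P_1^{({\rm N1})}, \ldots, P_8^{({\rm N1})}$.

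The main obstacle I anticipate is the sheer size of the brute-force loop at $q = 11$: the outer iteration over $(a_{k'_1}, \ldots, a_{k'_{t_1}}) \in \mathbb{F}_{11}^{10 - s_1}$ is dominated by the cost of a single $(PQ)^{10}$ computation, which in turn grows rapidly with $s_1$. Calibrating $(s_1, s_2)$ so that the estimate $q^{t - s_1}(t_{\rm mlt} + q^{s_1 - s_2} t_{\rm GBslv})$ stays tractable is genuinely delicate, and without the double hybrid strategy the naive choice $s_1 = s_2$ would be infeasible for $p = 11$ as explained in Section \ref{subsec:algorithm}. A secondary but nontrivial difficulty is that for certain degenerate branches (e.g., when several leading coefficients of the reduced system vanish) the Gr\"obner basis step produces positive-dimensional ideals whose $\mathbb{F}_{11}$-rational points must still be enumerated; this can be handled either by adding field equations $a_i^{11} - a_i$ to force zero-dimensionality or by splitting into cases. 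The isomorphism step is comparatively cheap since each Gr\"obner basis computation involves only eight variables over $\mathbb{F}_{11}$, though it still must be run for $|A| \cdot |W| = 2 \cdot 4 = 8$ choices of $(M_A, M_W)$ per ordered pair.
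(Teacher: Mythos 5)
Your proposal is correct and follows essentially the same approach as the paper: both invoke the Modified Version of Main Algorithm (double hybrid method) over the brute-forced pairs $(b_1,b_2)$, then pass the resulting list through Collecting Isomorphism Classes Algorithm in the (N1) Bruhat parametrization. The only thing you leave at the heuristic level that the paper makes explicit is the finer case split on $(b_1,a_4)$ — into $b_1\neq 0$, $b_1=0$ with $a_4\neq 0$, and $b_1=0$ with $a_4=0$ — each with its own $(s_1,s_2)$ and index choice, introduced precisely to avoid the memory blowup you anticipate.
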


\begin{proof}
Put $t = 10$, $u = 2$ and
\begin{eqnarray}
\{ p_1, \ldots , p_t \} & = & \{ y^3, y^2 z, y z^2, z^3, y^2 w, y z w, z^2 w, y w^2, z w^2, w^3 \}, \nonumber \\
\{ q_1, \ldots , q_u \} & = & \{ x^2 z, x z^2\}. \nonumber 
\end{eqnarray}
We divide our computation into the following three cases (this is our technical strategy to avoid the out of memory errors). 
\begin{description}
\item[{\bf (i) Case of $b_1 \neq 0$}.]
For each $b_1 \in \{ 1, \zeta^{(11)} \}$ and $b_2 \in \{ 0, 1 \}$, execute Modified Version of Main Algorithm in \cite{KH16}, given in Section \ref{subsec:algorithm}.
We here give an outline of our computation together with our choices of $s_1$, $s_2$, $\{ k_1, \ldots , k_{s_1} \}$, $\{ i_1, \ldots , i_{s_2} \}$, $\mathcal{A}_1$, $\mathcal{A}_2$ and a term ordering in the algorithm.
\begin{description}
\item[{\rm (0)}]
We set $s_1:= 8$, and $( k_1, \ldots , k_{s_1} ):= ( 3, \ldots , 10)$ (we regard the $8$ coefficients $a_3$, $a_4$, $a_5$, $a_6$, $a_7$, $a_8$, $a_9$, $a_{10}$ as indeterminates).
Let $\mathcal{A}_1:=\mathbb{F}_{11} \times \mathbb{F}_{11}$.
 \end{description}
For each $( a_1, a_2) \in \mathcal{A}_1 = \mathbb{F}_{11} \times \mathbb{F}_{11}$, we proceed the following three steps:
\begin{description}
	\item[{\rm (1)}] Compute $h:= (P Q)^{p-1}$ over $\mathbb{F}_{11} [a_3, \ldots, a_{10}]$, where $a_3, \ldots, a_{10}$ are indeterminates. 
	\item[{\rm (2)}] We set $s_2 := 6$, and $(i_1, \ldots , i_{s_2}) := ( 4, 5, 6, 8, 9, 10)$ (we regard the $6$ coefficients $a_4$, $a_5$, $a_6$, $a_8$, $a_9$, $a_{10}$ as indeterminates).
	For solving multivariate systems over $\mathbb{F}_{11} [ a_4, a_5, a_6, a_8, a_9, a_{10}]$ in the next step, we adopt the graded reverse lexicographic (grevlex) order with
\[
a_{10} \prec a_9 \prec a_4 \prec a_8 \prec a_6 \prec a_5,
\]
	whereas for $\mathbb{F}_{11} [ x, y, z, w]$, the grevlex order with $w \prec z \prec y \prec x$ is adopted.
	Put $\mathcal{A}_2 := \mathbb{F}_{11} \times \mathbb{F}_{11}$.
	\item[{\rm (3)}] As in the case (i) of the proof of Proposition \ref{prop:Degenerate_q5}, we compute cubic forms $P$ such that $V(P,Q)$ are superspecial.
More precisely, we proceed the following three steps for each $( a_3, a_7 ) \in \mathcal{A}_2= \mathbb{F}_{11} \times \mathbb{F}_{11}$: 
	\begin{description}
		\item[{\rm (a)}] Let $\mathcal{S}$ be the set of the coefficients of the monomials of $\mathcal{M}$ in $h$, where the set $\mathcal{M}$ consists of the $16$ monomials in Corollary \ref{cor:HW}.
		Note that $\mathcal{S} \subset \mathbb{F}_{11} [a_4, a_5, a_6, a_8, a_9, a_{10} ]$.
		\item[{\rm (b)}] Solve the multivariate system $f ( a_4, a_5, a_6, a_8, a_9, a_{10} ) = 0$ for all $f \in \mathcal{S}$ over $\mathbb{F}_{11}$ with known algorithms via the Gr\"{o}bner basis computation.
		\item[{\rm (c)}] For each solution $(a_4, a_5, a_6, a_8, a_9, a_{10} )$ of the above system, substitute it into unknown coefficients in $P$, and decide whether $C = V ( P, Q)$ is non-singular or not. 
	\end{description}
\end{description}

\item[{\bf (ii) Case of $b_1 = 0$ and $a_4 \neq 0$}.]
For each $b_2 \in \{ 0, 1 \}$, execute Modified Version of Main Algorithm in \cite{KH16}, given in Section \ref{subsec:algorithm}.
We here give an outline of our computation together with our choices of $s_1$, $s_2$, $\{ k_1, \ldots , k_{s_1} \}$, $\{ i_1, \ldots , i_{s_2} \}$, $\mathcal{A}_1$, $\mathcal{A}_2$ and a term ordering in the algorithm.
\begin{description}
\item[{\rm (0)}]
We set $s_1:= 9$, and $( k_1, \ldots , k_{s_1} ):= ( 2, \ldots , 10)$ (we regard the $9$ coefficients $a_2$, $a_3$, $a_4$, $a_5$, $a_6$, $a_7$, $a_8$, $a_9$, $a_{10}$ as indeterminates).
Let $\mathcal{A}_1:= \mathbb{F}_{11}$.
\end{description}
For each $a_1 \in \mathcal{A}_1 = \mathbb{F}_{11}$, we proceed the following three steps:
\begin{description}
	\item[{\rm (1)}] Compute $h:= (P Q)^{p-1}$ over $\mathbb{F}_{11} [a_2, \ldots, a_{10}]$, where $a_2, \ldots, a_{10}$ are indeterminates. 
	\item[{\rm (2)}] Put $s_2 := 5$, and $(i_1, \ldots , i_{s_2}) := ( 5, 6, 8, 9, 10)$ (we regard the $5$ coefficients $a_5$, $a_6$, $a_8$, $a_9$, $a_{10}$ as indeterminates).
	For solving multivariate systems over $\mathbb{F}_{11} [ a_5, a_6, a_8, a_9, a_{10}]$ in the next step, we adopt the grevlex order with
\[
a_{10} \prec a_9 \prec a_8 \prec a_6 \prec a_5,
\]
	whereas for $\mathbb{F}_{11} [ x, y, z, w]$, the grevlex order with $w \prec z \prec y \prec x$ is adopted.
	Put $\mathcal{A}_2 := \mathbb{F}_{11} \times \mathbb{F}_{11} \times \mathbb{F}_{11}^{\times} \times \mathbb{F}_{11}$.
	\item[{\rm (3)}] We conduct a procedure similar to Step 3 in Case of $b_1 \neq 0$.
Specifically for each $( a_2, a_3, a_4, a_7 ) \in \mathcal{A}_2$, enumerate $(a_5, a_6, a_8, a_9, a_{10} )$ such that $C=V(P,Q)$ is superspecial.
\end{description}

\item[{\bf (iii) Case of $b_1 = 0$ and $a_4 = 0$}.]
For each $b_2 \in \{ 0, 1 \}$, execute Modified Version of Main Algorithm in \cite{KH16}, given in Section \ref{subsec:algorithm}.
We here give an outline of our computation together with our choices of $s_1$, $s_2$, $\{ k_1, \ldots , k_{s_1} \}$, $\{ i_1, \ldots , i_{s_2} \}$, $\mathcal{A}_1$, $\mathcal{A}_2$ and a term ordering in the algorithm.
\begin{description}
\item[{\rm (0)}]
We set $s_1:= 8$, and $( k_1, \ldots , k_{s_1} ):= ( 2, 3, 5, 6, \ldots , 10)$ (we regard the $8$ coefficients $a_2$, $a_3$, $a_5$, $a_6$, $a_7$, $a_8$, $a_9$, $a_{10}$ as indeterminates).
Let $\mathcal{A}_1:=\mathbb{F}_{11}$.
\end{description}
For each $a_1 \in \mathcal{A}_1 = \mathbb{F}_{11} $, we proceed the following three steps:
\begin{description}
	\item[{\rm (1)}] Compute $h:= (P Q)^{p-1}$ over $\mathbb{F}_{11} [a_2, a_3, a_5, a_6, \ldots, a_{10}]$, where $a_2, a_3, a_5, a_6, \ldots, a_{10}$ are indeterminates. 
	\item[{\rm (2)}] Put $s_2 := 4$, and $(i_1, \ldots , i_{s_2}) := ( 6, 8, 9, 10)$ (we regard the $4$ coefficients $a_6$, $a_8$, $a_9$, $a_{10}$ as indeterminates).
	For solving multivariate systems over $\mathbb{F}_{11} [ a_6, a_8, a_9, a_{10}]$ in the next step, we adopt the grevlex order with
\[
a_{10} \prec a_9 \prec a_8 \prec a_6,
\]
	whereas for $\mathbb{F}_{11} [ x, y, z, w]$, the grevlex order with $w \prec z \prec y \prec x$ is adopted.
	Put $\mathcal{A}_2 := \mathbb{F}_{11} \times \mathbb{F}_{11} \times \mathbb{F}_{11} \times \mathbb{F}_{11}$.
	\item[{\rm (3)}] We conduct a procedure similar to Step 3 in Case of $b_1 \neq 0$.
Specifically for each $( a_2, a_3, a_5, a_7 ) \in \mathcal{A}_2$, enumerate $(a_6, a_8, a_9, a_{10} )$ such that $C=V(P,Q)$ is superspecial.
\end{description}
\end{description}
Let $\mathcal{P}$ be the list of cubics $P$ such that $V(P,Q)$ was determined to be superspecial in the above procedures (i), (ii) and (iii).
For the inputs $\mathcal{P}$ and $q=11$, we execute Collecting Isomorphism Classes Algorithm for (N1), given in Section \ref{subsec:isom}.
By the outputs of our computation, a cubic form $P$ of the form \eqref{eq:N1q11} such that $V(P,Q)$ is superspecial is one of $P_i^{({\rm N1})}$ for $1 \leq i \leq 7$, up to isomorphism over $\mathbb{F}_{11}$.
\end{proof}

\subsubsection{Case of (N2) for $q = 11$}\label{subsubsec:comp_N2_q11}

\begin{prop}\label{prop:N2q11}
Consider the quadratic form $Q = 2 x w + y^2 - \epsilon z^2 \in \mathbb{F}_{11}$ with $\mathbb{F}_{11}^{\times} \smallsetminus ( \mathbb{F}_{11}^{\times} )^2$ and cubic forms of the form
\begin{equation}
\begin{split}
P = & ( a_1 y + a_2 z ) x^2 + a_3 (y^2 - \epsilon z^2) x + b_1 y ( y^2 - \epsilon z^2 ) + a_4 y ( y^2 + 3 \epsilon z^2 ) + a_5 z (3 y^2 + \epsilon z^2 ) \\
& + ( a_6 y^2 + a_7 y z + b_2 z^2 ) w + ( a_8 y + a_9 z ) w^2 + a_{10} w^3,
\end{split}\label{eq:N2q11}
\end{equation}
where $(a_1, a_2) \in (\mathbb{F}_{11} \times \mathbb{F}_{11}) \smallsetminus \{ (0, 0) \}$ and $b_1, b_2 \in \{ 0, 1 \}$.
Then a cubic form $P$ of the form \eqref{eq:N2q11} such that $V(P,Q)$ is superspecial is one of
\begin{eqnarray}
P_1^{({\rm N2})}& = & x^2 y + x^2 z + x y^2 + 9 x z^2 + 6 y^3 + y^2 z + 5 y^2 w + 3 y z^2 + 9 y w^2 + 8 z^3 + z^2 w + 9 z w^2 + 6 w^3, \nonumber \\
P_2^{({\rm N2})}&= & x^2 z + 5 y^3 + 4 z w^2, \nonumber \\
P_3^{({\rm N2})}&= & x^2 y + x^2 z + 9 y^3 + 8 y^2 z + 2 y z^2 + 4 y w^2 + 9 z^3 + 4 z w^2, \nonumber \\
P_4^{({\rm N2})}&= & 8 x^2 y + 2 x^2 z + y^3 + 8 y^2 z + 6 y^2 w + 9 y z^2 + 2 y z w + 5 y w^2 + 9 z^3 + z^2 w + 4 z w^2 + w^3, \nonumber \\
P_5^{({\rm N2})}&= & 6 x^2 y + 4 x^2 z + 6 x y^2 + 10 x z^2 + 10 y^3 + 4 y^2 z + 3 y^2 w + 8 y z^2 + 6 y z w + 9 y w^2 + 10 z^3 \nonumber \\
& & + z^2 w + z w^2 + 9 w^3, \nonumber
\end{eqnarray}
up to isomorphism over $\mathbb{F}_{11}$.
\end{prop}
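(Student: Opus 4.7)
The plan is to prove Proposition \ref{prop:N2q11} by executing Modified Version of Main Algorithm in \cite{KH16} (the double hybrid method from Section \ref{subsec:algorithm}) for the quadratic form $Q = 2xw + y^2 - \epsilon z^2$ and cubic forms of the reduced shape \eqref{eq:N2q11}, then pruning the resulting list of superspecial defining equations by a variant of Collecting Isomorphism Classes Algorithm adapted to the Bruhat decomposition of $\tilde{\gO}_\varphi(\F_{11})$ given in Section \ref{non-degenerate case} (N2). I will fix a non-square $\epsilon\in\F_{11}^\times\smallsetminus(\F_{11}^\times)^2$, and take the parameter space
\[
\{p_1,\ldots,p_{10}\}=\{\,y(y^2+3\epsilon z^2),\ z(3y^2+\epsilon z^2),\ y^2w,\ yzw,\ yw^2,\ zw^2,\ w^3,\ x^2y,\ x^2z,\ (y^2-\epsilon z^2)x\,\}
\]
together with the binary switches $b_1,b_2\in\{0,1\}$ for $y(y^2-\epsilon z^2)$ and $z^2w$.

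First I would split on $(a_1,a_2)\ne(0,0)$: since the pair is non-zero, up to a case split one coordinate can be assumed non-zero, which keeps us inside the reduced family of Lemma \ref{ReductionLemmaN2}. For each choice of $(b_1,b_2)\in\{0,1\}^2$ and each pair $(a_1,a_2)$ in a suitable slice of $\F_{11}^2\smallsetminus\{0\}$, I would carry out Step (0): fix a large index set $\{k_1,\ldots,k_{s_1}\}$ of coefficients (for instance $s_1=8$ on $a_3,\ldots,a_{10}$) to be treated as indeterminates while computing $h:=(PQ)^{p-1}$ once per outer iteration; then Step (2): pick a smaller subset $\{i_1,\ldots,i_{s_2}\}$ with $s_2$ around $5$ or $6$ on which Gröbner bases will actually be solved, brute-forcing the remaining $a_{j_k}$'s over $\F_{11}$. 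This is exactly the pattern followed in cases (ii) and (iii) of the proof of Proposition \ref{prop:N1q11}, and I expect the exact choice of $s_1,s_2$ and of the grevlex ordering on the solved block to be determined heuristically, as was done there.

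For every solution tuple I would substitute back into $P$ and invoke Lemma \ref{lem:non-sing} to reject singular candidates, using $\langle J(P,Q),P,Q,1-YX_i\rangle=\langle 1\rangle$ as the non-singularity test. This yields a finite raw list $\mathcal{P}$ of defining cubics whose $V(P,Q)$ is a (nonhyperelliptic) superspecial curve. Finally I would feed $(\mathcal{P},11)$ to the Collecting Isomorphism Classes routine written for (N2): as in Isomorphism Testing Algorithm, each pair $(P,P')$ is tested by forming $g\cdot P - \lambda P' \pmod Q$ for $g$ running over $\gA\,\gH\,\tilde\gC\,\gU\,\gW\,\gU$ with the defining relations of $\gH,\tilde\gC,\gW,\gU$ from Section \ref{non-degenerate case} (N2), adjoining the cyclicity equations $t^{q-1}=1$ or $t^q=t$ on the parameters and $\lambda^{q-1}=1$, and computing a reduced Gröbner basis; the pair is $\F_{11}$-isomorphic exactly when the basis is not $\{1\}$.

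The main obstacle I anticipate is computational rather than conceptual: the cost of $(PQ)^{p-1}$ with $p=11$ and many retained indeterminates is heavy, and the brute-force over $\F_{11}^{\,t-s_1}$ must be kept small enough to terminate, while the Gröbner basis step over $\F_{11}[a_{i_1},\ldots,a_{i_{s_2}}]$ must remain tractable. Balancing these two regimes — precisely the tradeoff that motivated the double hybrid method in Section \ref{subsec:algorithm} — is the delicate step, and, as with (N1), the specific choice of $s_1,s_2$ and the grevlex order on the solved block will have to be tuned experimentally. A secondary subtlety is the handling of the constraint $(a_1,a_2)\ne(0,0)$ and of $\epsilon\notin(\F_{11}^\times)^2$ inside the Gröbner basis ideal, for which adjoining the non-vanishing conditions as Rabinowitsch-style auxiliary generators is the safest route, exactly as in the non-singularity test.
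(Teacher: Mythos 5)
Your plan is essentially the paper's proof: run Modified Version of Main Algorithm on the reduced family of Lemma \ref{ReductionLemmaN2}, reject singular loci with Lemma \ref{lem:non-sing}, then reduce to $\mathbb{F}_{11}$-isomorphism classes with the (N2) variant of Collecting Isomorphism Classes Algorithm built from the Bruhat decomposition of Section \ref{non-degenerate case}. The one substantive divergence is in the double-hybrid parameterization, and it matters quantitatively: you put $(a_1,a_2)\in\mathbb{F}_{11}^2\smallsetminus\{(0,0)\}$ in the outer loop with $s_1=8$ on $a_3,\dots,a_{10}$, while the paper takes $\mathcal{A}_1=\mathbb{F}_{11}$ on $a_3$ alone, $s_1=9$ with $a_1,a_2,a_4,\dots,a_{10}$ kept indeterminate during $(PQ)^{p-1}$, and then defers $(a_1,a_2,a_4,a_5)$ to $\mathcal{A}_2=(\mathbb{F}_{11}^2\smallsetminus\{(0,0)\})\times\mathbb{F}_{11}^2$ at the Gr\"obner-solving stage with $s_2=5$. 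Since the $(PQ)^{p-1}$ step is the dominant cost in (N2) (about $9$s per call), your layout calls it roughly $120/11\approx 11$ times more often than the paper's, so it would be considerably slower though still mathematically valid. Two small corrections: $\epsilon$ is a single fixed non-square (the paper uses $\epsilon=2$ in Proposition \ref{prop:for_cor}), not a Gr\"obner indeterminate, so no Rabinowitsch-style generator is needed for $\epsilon\notin(\mathbb{F}_{11}^\times)^2$; and Lemma \ref{ReductionLemmaN2} gives no normalization of $(a_1,a_2)$ beyond $(a_1,a_2)\ne(0,0)$, so the iteration must cover all $120$ admissible pairs rather than a representative ``slice.''
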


\begin{proof}
Put $t = 10$, $u = 2$ and
\begin{eqnarray}
\{ p_1, \ldots , p_t \} & = & \{ y x^2, z x^2, ( y^2 - \epsilon z^2) x, y ( y^2 + 3 \epsilon z^2 ), z (3 y^2 + \epsilon z^2 ), y^2 w , y z w, y w^2, z w^2, w^3 \}, \nonumber \\
\{ q_1, \ldots , q_u \} & = & \{ y ( y^2 - \epsilon z^2 ), z^2 w \}. \nonumber 
\end{eqnarray}
For each $(b_1, b_2 ) \in \{0, 1 \}^{\oplus 2}$, execute Modified Version of Main Algorithm in \cite{KH16}, given in Section \ref{subsec:algorithm}.
We here give an outline of our computation together with our choices of $s_1$, $s_2$, $\{ k_1, \ldots , k_{s_1} \}$, $\{ i_1, \ldots , i_{s_2} \}$, $\mathcal{A}_1$, $\mathcal{A}_2$ and a term ordering in the algorithm.
\begin{description}
\item[{\rm (0)}]
We set $s_1:= 9$, and $( k_1, \ldots , k_{s_1} ):= ( 1, 2, 4, \ldots , 10)$ (we regard the $9$ coefficients $a_1$, $a_2$, $a_4$, $a_5$, $a_6$, $a_7$, $a_8$, $a_9$, $a_{10}$ as indeterminates).
Let $\mathcal{A}_1:=\mathbb{F}_{11}$.
\end{description}
For each $a_3 \in \mathcal{A}_1 = \mathbb{F}_{11}$, we proceed the following three steps:
\begin{description}
	\item[{\rm (1)}] Compute $h:= (P Q)^{p-1}$ over $\mathbb{F}_{11} [a_1, a_2, a_4, \ldots, a_{10}]$, where $a_1, a_2, a_4, \ldots, a_{10}$ are indeterminates. 
	\item[{\rm (2)}] Put $s_2 := 5$, and $(i_1, \ldots , i_{s_2}) := ( 6, 7, 8, 9, 10)$ (we regard the $5$ coefficients $a_6$, $a_7$, $a_8$, $a_9$, $a_{10}$ as indeterminates).
	 For solving multivariate systems over $\mathbb{F}_{11} [ a_6, a_7, a_8, a_9, a_{10}]$ in the next step, we adopt the grevlex order with
\[
a_{10} \prec a_9 \prec a_8 \prec a_7 \prec a_6,
\]
	whereas for $\mathbb{F}_{11} [ x, y, z, w]$, the grevlex order with $w \prec z \prec y \prec x$ is adopted.
	Put $\mathcal{A}_2 := (\mathbb{F}_{11} \times \mathbb{F}_{11} \smallsetminus \{ (0, 0 ) \} ) \times \mathbb{F}_{11} \times \mathbb{F}_{11}$.
	\item[{\rm (3)}] We conduct a procedure similar to Case of $b_1 \neq 0$ in Proposition \ref{prop:N1q11}.
Specifically for each $( a_1, a_2, a_4, a_5 ) \in \mathcal{A}_2$, enumerate $( a_6, a_7, a_8, a_9, a_{10} )$ such that $C=V(P,Q)$ is superspecial.
\end{description}
Let $\mathcal{P}$ be the list of cubics $P$ such that $V(P,Q)$ was determined to be superspecial in the above procedures.
For the inputs $\mathcal{P}$ and $q=11$, we execute a variant of Isomorphism Classes Collecting Algorithm given in Section \ref{subsec:isom} (for constructing the variant, see Remark \ref{rem:isom} (2)).
By the outputs of our computation, a cubic form $P$ of the form \eqref{eq:N2q11} such that $V(P,Q)$ is superspecial is one of $P_i^{({\rm N2})}$ for $1 \leq i \leq 5$, up to isomorphism over $\mathbb{F}_{11}$.
\end{proof}


\subsubsection{Degenerate case for $q=11$}\label{subsubsec:comp_dege_q11}

\begin{prop}\label{prop:Degenerate_q11}
Consider the quadratic form $Q = 2 y w + z^2 \in \mathbb{F}_{11}[x,y,z,w]$ and cubic forms $P \in \mathbb{F}_{11}[x,y,z,w]$ of the form
\begin{equation}
\begin{split}
P =& a_0 x^3 + ( a_1 y^2 + a_2 z^2 + a_3 w^2 + a_4 y z  + a_5 z w ) x  \\
& + a_6 y^3 + a_7 z^3 + a_8 w^3 + a_9 y z^2 + b_1 z^2 w + b_2 z w^2,
\end{split}\label{eq:Degeq11}
\end{equation}
where $a_0, a_6 \in \mathbb{F}_{11}^{\times}$, $b_1, b_2 \in \{ 0, 1 \}$.
Then a cubic form $P$ of the form \eqref{eq:Degeq11} such that $V(P,Q)$ is superspecial is one of
\begin{eqnarray}
P_1^{({\rm Dege})}&= & w^3+x^3+y^3, \nonumber \\
P_2^{({\rm Dege})}&= & 2 w^3+x^3+y^3, \nonumber \\
P_3^{({\rm Dege})}&= & 5 w^3+x^3+y^3+z^3, \nonumber \\
P_4^{({\rm Dege})}&= & w^2 x+x^3+y^3, \nonumber \\
P_5^{({\rm Dege})}&= & 2 w^2 x+x^3+y^3, \nonumber \\
P_6^{({\rm Dege})}&= & w^3+w x z+x^3+y^3+7 z^3, \nonumber \\
P_7^{({\rm Dege})}&= & 4 w^3+w^2 x+x^3+x y z+y^3+5 z^3, \nonumber \\
P_8^{({\rm Dege})}&= & 8 w^3+6 w^2 x+x^3+x y z+y^3+8 z^3, \nonumber \\
P_9^{({\rm Dege})}&= & 4 w^3+w^2 z+x^3+5 y^3+2 y z^2+z^3, \nonumber \\
P_{10}^{({\rm Dege})}&= & 2 w^3+w z^2+x^3+y^3+8 y z^2, \nonumber \\
P_{11}^{({\rm Dege})} &= & 3 w^3+w z^2+x^3+2 y^3+2 y z^2+4 z^3, \nonumber \\
P_{12}^{({\rm Dege})} &= & 10 w^3+w z^2+x^3+2 y^3+4 y z^2, \nonumber \\
P_{13}^{({\rm Dege})}&= & 7 w^3+w^2 z+w z^2+x^3+2 y^3+4 y z^2+z^3, \nonumber \\
P_{14}^{({\rm Dege})}&= & 7 w^3+2 w^2 x+w^2 z+8 w x z+w z^2+x^3+x y^2+7 x y z+8 x z^2+2 y^3+4 y z^2+z^3, \nonumber \\
P_{15}^{({\rm Dege})} &= & 10 w^3+w^2 z+w z^2+x^3+5 y^3+3 y z^2+5 z^3, \nonumber \\
P_{16}^{({\rm Dege})}&= & 6 w^3+w^2 z+w z^2+x^3+6 y^3+2 y z^2+6 z^3, \nonumber \\
P_{17}^{({\rm Dege})}& =& w^2 z+w z^2+x^3+10 y^3+6 y z^2+7 z^3, \nonumber 
\end{eqnarray}
up to isomorphism over $\mathbb{F}_{11}$.
\end{prop}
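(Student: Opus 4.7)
The plan is to follow the same template as Propositions \ref{prop:Degenerate_q5}, \ref{prop:N1q11}, and \ref{prop:N2q11}, since the cubic form \eqref{eq:Degeq11} has the same shape as in the degenerate case over $\mathbb{F}_5$ but with parameters ranging over the larger field $\mathbb{F}_{11}$. First I set $t=10$ and $u=2$ with
\[
\{p_1,\ldots,p_t\} = \{x^3, xy^2, xz^2, xw^2, xyz, xzw, y^3, z^3, w^3, yz^2\}, \qquad \{q_1,q_2\}=\{z^2w, zw^2\},
\]
so that $P = \sum a_i p_i + \sum b_j q_j$ with $a_0,a_6 \in \mathbb{F}_{11}^\times$ and $b_1,b_2 \in \{0,1\}$. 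This matches the setup of the enumeration algorithm in Section \ref{subsec:algorithm}.

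Next I will apply Modified Version of Main Algorithm in \cite{KH16} (the double hybrid method). Since $q=11$ makes a naive enumeration with all ten $a_i$'s free far too expensive, I will choose $s_1$ large enough that the number of outer iterations $q^{t-s_1}$ stays small (of order at most $q^2$), while $s_2$ is chosen so that the Gr\"obner basis systems over $\mathbb{F}_{11}[a_{i_1},\ldots,a_{i_{s_2}}]$ remain tractable. Concretely I expect to take $s_1=8$ with the indeterminates comprising a subset of $\{a_1,\ldots,a_5,a_7,a_8,a_9\}$ (omitting $a_0,a_6$, which are fixed units, and two further coefficients that are iterated in the outer loop), and $s_2$ between $4$ and $6$; as in Proposition \ref{prop:N1q11} the exact choice is heuristic, based on trial runs, and is tuned in each subcase. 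For each outer iteration I compute $(PQ)^{p-1}$ over the polynomial ring in the chosen indeterminates, extract the coefficients of the $16$ monomials listed in Proposition \ref{cor:HW}, adjoin suitable $a_{j_\ell}-c_{j_\ell}$ relations from the middle-loop brute force, and solve the resulting zero-dimensional system by a grevlex Gr\"obner basis. Each solution is then tested for non-singularity by Lemma \ref{lem:non-sing}.

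Because $a_0$ and $a_6$ are units, I anticipate splitting the computation into cases according to the $(a_0,a_6,b_1,b_2)$-values (so $10\cdot 10 \cdot 2 \cdot 2 = 400$ outer slices), and possibly a further split on whether $a_5=0$ or $a_9=0$ in order to keep individual Gr\"obner basis computations within memory, mirroring the three-way split in Proposition \ref{prop:N1q11}. This gives a finite list $\mathcal{P}$ of cubics $P$ for which $V(P,Q)$ is non-singular and superspecial.

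Finally I apply the Collecting Isomorphism Classes Algorithm of Section \ref{subsec:isom}, in its variant adapted to the degenerate quadric $Q=2yw+z^2$ using the Bruhat decomposition $\tilde{\gO}_\varphi(K)=(\tilde\gB \sqcup \tilde\gB s\gU)\gV$ of Section \ref{degenerate case} (see Remark \ref{rem:isom}(2)). This reduces $\mathcal{P}$ modulo $\tilde\gO_\varphi(\mathbb{F}_{11})$-equivalence to a list of representatives, which I expect to match exactly $P_1^{(\mathrm{Dege})},\ldots,P_{17}^{(\mathrm{Dege})}$. The main obstacle is purely computational: managing the memory footprint of the multiplication $(PQ)^{10}$ when eight coefficients are treated as indeterminates, and ensuring that the $400$-fold outer split together with the chosen $s_1,s_2$ keeps both the multiplication time $t_{mlt}$ and the Gr\"obner basis time $t_{GBslv}$ within a practical budget on Magma; correctness of the enumeration itself is already guaranteed by Lemma \ref{ReductionLemmaDegenerate}, Proposition \ref{cor:HW}, and Lemma \ref{lem:non-sing}.
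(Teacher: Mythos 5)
Your plan is essentially the paper's proof: same reduction via Lemma \ref{ReductionLemmaDegenerate}, same $p_i$'s and $q_j$'s, the Modified Version of Main Algorithm (double hybrid method), and then the degenerate-case variant of the Collecting Isomorphism Classes Algorithm using the Bruhat decomposition of Section \ref{degenerate case}. The only meaningful deviation is in parameter tuning: the paper takes $s_1=9$ (treating $a_1,\dots,a_9$, \emph{including} $a_6$, as indeterminates for the $(PQ)^{p-1}$ multiplication) and brute-forces only $a_0\in\F_{11}^\times$ and $(b_1,b_2)$ in the outer loop (so $40$ rather than your $400$ multiplications, with $a_6$ then running through $\F_{11}^\times$ inside $\mathcal A_2$), chooses $s_2=6$ with $(a_2,a_3,a_5,a_7,a_8,a_9)$ as the Gr\"obner indeterminates, and does not need the extra $a_5=0$/$a_9=0$ split you contemplate — but these are performance heuristics, not correctness issues.
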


\begin{proof}
Put $t = 10$, $u = 2$ and
\begin{eqnarray}
\{ p_1, \ldots , p_t \} & = & \{ x^3, x y^2, x z^2, x w^2, x y z, x z w, y^3, z^3, w^3, y z^2 \}, \nonumber \\
\{ q_1, \ldots , q_u \} & = & \{ z^2 w, z w^2 \}. \nonumber 
\end{eqnarray}
For each $(b_1, b_2 ) \in \{ 0,1 \}^{\oplus 2}$, execute Modified Version of Main Algorithm in \cite{KH16}, given in Section \ref{subsec:algorithm}.
We here give an outline of our computation together with our choices of $s_1$, $s_2$, $\{ k_1, \ldots , k_{s_1} \}$, $\{ i_1, \ldots , i_{s_2} \}$, $\mathcal{A}_1$, $\mathcal{A}_2$ and a term ordering in the algorithm.
\begin{description}
\item[{\rm (0)}]
We set $s_1:= 9$, and $( k_1, \ldots , k_{s_1} ):= ( 1, \ldots , 9)$ (we regard the $9$ coefficients $a_1$, $a_2$, $a_3$, $a_4$, $a_5$, $a_6$, $a_7$, $a_8$, $a_9$ as indeterminates).
Let $\mathcal{A}_1:=\mathbb{F}_{11}^{\times}$.
\end{description}
For each $a_0 \in \mathcal{A}_1:= \mathbb{F}_{11}^{\times}$, we proceed the following three steps:
\begin{description}
	\item[{\rm (1)}] Compute $h:= (P Q)^{p-1}$ over $\mathbb{F}_{11} [a_1, \ldots, a_9]$, where $a_1, \ldots, a_9$ are indeterminates. 
	\item[{\rm (2)}] Put $s_2 := 6$, and $(i_1, \ldots , i_{s_2}) := ( 2, 3, 5, 7, 8, 9)$ (we regard the $6$ coefficients $a_2$, $a_3$, $a_5$, $a_7$, $a_8$, $a_9$ as indeterminates).
	For solving multivariate systems over $\mathbb{F}_{11}[a_2, a_3, a_5, a_7, a_8, a_9]$, we adopt the grevlex order with
\[
a_8 \prec a_7 \prec a_9 \prec a_3 \prec a_5 \prec a_2,
\]
	whereas for $\mathbb{F}_{11} [ x, y, z, w]$, we adopt the grevlex order with $w \prec z \prec y \prec x$.
	Put $\mathcal{A}_2 := \mathbb{F}_{11} \times \mathbb{F}_{11} \times \mathbb{F}_{11}^{\times}$.
	\item[{\rm (3)}] We conduct a procedure similar to Case of $b_1 \neq 0$ in Proposition \ref{prop:N1q11}.
Specifically for each $( a_1, a_4, a_6 ) \in \mathcal{A}_2$, enumerate $( a_2, a_3, a_5, a_7, a_8, a_9 )$ such that $C=V(P,Q)$ is superspecial.
\end{description}
Let $\mathcal{P}$ be the list of cubics $P$ such that $V(P,Q)$ was determined to be superspecial in the above procedures.
For the inputs $\mathcal{P}$ and $q=11$, we execute a variant of Collecting Isomorphism Classes Algorithm given in Section \ref{subsec:isom} (for constructing the variant, see Remark \ref{rem:isom} (2)).
By the outputs of our computation, a cubic form $P$ of the form \eqref{eq:Degeq11} such that $V(P,Q)$ is superspecial is one of $P_i^{({\rm Dege})}$ for $1 \leq i \leq 17$, up to isomorphism over $\mathbb{F}_{11}$.
\end{proof}


\subsubsection{Computational parts for our proof of Corollary \ref{MainCorollary}}

\begin{prop}\label{prop:for_cor}
Consider the quadratic forms $Q^{(\mathrm{N1})} = 2 x w + 2 y z$, $Q^{(\mathrm{N2})} = 2 x w + y^2 - \epsilon z^2$ with $\mathbb{F}_{11}^{\times} \smallsetminus ( \mathbb{F}_{11}^{\times} )^2$, $Q^{(\mathrm{Dege})} = 2 y w + z^2$ and the cubic forms
\begin{eqnarray}
P_1^{(\mathrm{alc})}&:=& x^2 y + x^2 z + 2 y^2 z + 5 y^2 w + 9 y z^2 + y z w + 4 z^3 + 3 z^2 w + 10 z w^2 + w^3, \nonumber \\
P_2^{(\mathrm{alc})} &:=& x^2 y + x^2 z + y^3 + y^2 z + 7 y z^2 + 4 y w^2 + 2 z^3 + 9 z w^2, \nonumber \\
P_3^{(\mathrm{alc})} &:=& x^2 y + x^2 z + y^3 + 8 y^2 z + 3 y z^2 + 10 y w^2 + 10 z^3 + 10 z w^2, \nonumber \\
P_4^{(\mathrm{alc})} &:=& x^3 + y^3 + w^3, \nonumber \\
P_5^{(\mathrm{alc})} &:=& x^3 + y^3 + z^3 + 5 w^3, \nonumber \\
P_6^{(\mathrm{alc})} &:=& x^3 + x w^2 + y^3, \nonumber \\
P_7^{(\mathrm{alc})} &:=& x^3 + x z w + y^3 + 7 z^3 + w^3, \nonumber \\
P_8^{(\mathrm{alc})} &:=& x^3 + x y z + x w^2 + y^3 + 5 z^3 + 4 w^3, \nonumber \\
P_9^{(\mathrm{alc})} &:=& x^3 + x y z + 6 x w^2 + y^3 + 8 z^3 + 8 w^3 \nonumber 
\end{eqnarray}
in $\mathbb{F}_{11}[x,y,z,w]$.
Let $P_i^{(\mathrm{N1})}$, $P_j^{(\mathrm{N2})}$ and $P_k^{(\mathrm{Dege})}$ be as in $\mathrm{Propositions}$ $\ref{prop:N1q11}$ -- $\ref{prop:Degenerate_q11}$.
\begin{enumerate}
\item[{\rm (1)}] Each of $V (P_i^{(\mathrm{N1})}, Q^{(\mathrm{N1})})$ and $V (P_j^{(\mathrm{N2})}, Q^{(\mathrm{N2})} )$ with $1 \leq i \leq 8$ and $1 \leq j \leq 5$ is isomorphic to $V ( P_k^{(\mathrm{alc})}, Q^{(\mathrm{N1})})$ over $\overline{\mathbb{F}_{11}}$ for some $1 \leq k \leq 3$, and vice versa.
Moreover, $V ( P_i^{(\mathrm{alc})}, Q^{(\mathrm{N1})})$ is not isomorphic to $V ( P_j^{(\mathrm{alc})}, Q^{(\mathrm{N1})})$ over $\overline{\mathbb{F}_{11}}$ for each $1 \leq i < j \leq 3$.
\item[{\rm (2)}] Each $V (P_i^{(\mathrm{Dege})}, Q^{(\mathrm{Dege})})$ with $1 \leq i \leq 17$ is isomorphic to $V ( P_j^{(\mathrm{alc})}, Q^{(\mathrm{Dege})})$ over $\overline{\mathbb{F}_{11}}$ for some $4 \leq j \leq 9$, and vice versa.
Moreover, $V ( P_i^{(\mathrm{alc})}, Q^{(\mathrm{Dege})})$ is not isomorphic to $V ( P_j^{(\mathrm{alc})}, Q^{(\mathrm{Dege})})$ over $\overline{\mathbb{F}_{11}}$ for each $4 \leq i < j \leq 9$.
\end{enumerate}
\end{prop}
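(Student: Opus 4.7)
The plan is to reuse the framework of the Isomorphism Testing Algorithm and the Collecting Isomorphism Classes Algorithm from Section \ref{subsec:isom}, but with two modifications aimed at detecting isomorphism over $\overline{\mathbb{F}_{11}}$ rather than over $\mathbb{F}_{11}$. First, in the multivariate system produced by step (b) of the Isomorphism Testing Algorithm, I would drop the constraints $t_i^{q-1}=1$, $t_j^{q}=t_j$ and $\lambda^{q-1}=1$ that pin the unknowns to $\mathbb{F}_q$, keeping only those generators that come from the coefficient-wise identity $g\cdot P_1\equiv \lambda P_2\pmod Q$ (together with invertibility conditions such as $t_1\mu-1$, $\lambda\mu'-1$, encoded through extra variables). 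By the Nullstellensatz, the resulting ideal $\langle\mathcal{S}\rangle\subset \mathbb{F}_{11}[t_1,\ldots,t_7,\lambda,\mu,\mu']$ is the unit ideal if and only if the system has no solution over $\overline{\mathbb{F}_{11}}$, so computing the reduced Gr\"obner basis and testing $\sharp G\neq 1$ decides $\overline{\mathbb{F}_{11}}$-isomorphism; this exactly parallels the non-singularity test in Lemma~\ref{lem:non-sing}.

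For part (1), I first need to bring all curves with non-degenerate quadratic form into a common frame. Over $\mathbb{F}_{11^2}$ we have $\epsilon\in (\mathbb{F}_{11^2}^\times)^2$, so there is an explicit $g_0\in\GL_4(\mathbb{F}_{11^2})$ with ${}^{t}g_0\,\varphi^{(\mathrm{N2})}\,g_0 = \varphi^{(\mathrm{N1})}$ (for instance $g_0=\diag(1,1,\sqrt{-\epsilon},1)$ composed with a coordinate swap sending $y^2-\epsilon z^2$ to $2yz$). Transporting each $V(P_j^{(\mathrm{N2})},Q^{(\mathrm{N2})})$ via $g_0$ produces a curve $V(\tilde P_j,Q^{(\mathrm{N1})})$ defined over $\mathbb{F}_{11^2}$, after which all $13$ non-degenerate curves live in the same $Q^{(\mathrm{N1})}$-model. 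I then run the modified Isomorphism Testing Algorithm for (N1) pairwise on the list consisting of the eight $P_i^{(\mathrm{N1})}$ and the five $\tilde P_j$, and feed the result into (the $\overline{\mathbb{F}_{11}}$-version of) the Collecting Isomorphism Classes Algorithm. The expected output is exactly three classes, whose representatives can be taken to be $P_1^{(\mathrm{alc})},P_2^{(\mathrm{alc})},P_3^{(\mathrm{alc})}$ (which happen to coincide with $P_1^{(\mathrm{N1})},P_2^{(\mathrm{N1})},P_3^{(\mathrm{N1})}$). The distinctness of these three classes is the final Gr\"obner basis check: for each of the three pairs I verify $1\in\langle\mathcal{S}\rangle$ over $\mathbb{F}_{11}$.

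For part (2), the quadratic form $Q^{(\mathrm{Dege})}=2yw+z^2$ has the same similitude orbit over $\mathbb{F}_{11}$ and over $\overline{\mathbb{F}_{11}}$ (degenerate rank $3$), so no coordinate change on $Q$ is needed. I run the analogous modified Isomorphism Testing Algorithm, based on the Bruhat decomposition $\tilde\gO_\varphi(K)=(\tilde\gB\sqcup\tilde\gB s\gU)\gV$ from Section~\ref{degenerate case}, on the seventeen cubics $P_1^{(\mathrm{Dege})},\ldots,P_{17}^{(\mathrm{Dege})}$. Collecting classes, I expect six distinct $\overline{\mathbb{F}_{11}}$-isomorphism classes represented by $P_4^{(\mathrm{alc})},\ldots,P_9^{(\mathrm{alc})}$, and I again verify pairwise non-isomorphism by checking that the reduced Gr\"obner basis equals $\{1\}$ for each of the $\binom{6}{2}=15$ pairs.

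The main obstacle I anticipate is computational: dropping the field equations $t_i^q=t_i$ removes a powerful size-cutting device, so the resulting ideals live in a polynomial ring of positive Krull dimension and the Gr\"obner basis computation may blow up in degree, especially in the $(\mathrm{N1})$ case where the parametrization $g=M_A M_{\tilde T} U_1(t_4)U_2(t_5)M_W U_1(t_6)U_2(t_7)$ already depends on seven parameters. To mitigate this I would (i) fix $t_3=1$ by absorbing the similitude factor into $\lambda$, (ii) iterate over the finite pieces $M_A,M_W$ (or $M_A,M_s$) exactly as in the $\mathbb{F}_q$-case, so that each individual Gr\"obner basis computation is performed in a polynomial ring with at most six or seven indeterminates, and (iii) choose a degree-reverse-lexicographic order adapted to the parametrization. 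Since this is essentially the same workload as the $\mathbb{F}_{11}$-isomorphism enumeration, already carried out in Propositions~\ref{prop:N1q11}--\ref{prop:Degenerate_q11}, the computation is expected to terminate within a comparable time budget in Magma.
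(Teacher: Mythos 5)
Your proposal follows essentially the same strategy as the paper: transform each $(\mathrm{N2})$ curve into the $(\mathrm{N1})$ frame via an explicit matrix in $\GL_4(\mathbb{F}_{11^2})$ involving $\sqrt{\epsilon}$, then run an algebraic-closure variant of the Collecting Isomorphism Classes Algorithm on the resulting common-$Q$ list, and do the analogous thing for the degenerate case. The paper's proof simply invokes ``a variant'' of the algorithm without spelling it out; your concrete recipe (dropping the field equations $t_i^{q}=t_j$, $\lambda^{q-1}=1$, adding invertibility auxiliaries, and detecting unsolvability over $\overline{\mathbb{F}_{11}}$ by the reduced Gr\"obner basis being $\{1\}$ via the Nullstellensatz) is a reasonable and correct instantiation of that variant.
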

\begin{proof}
We prove (1).
Take $\epsilon = 2 \in \mathbb{F}_{11}$.
We first transform $V ( P_i^{(\mathrm{N2})}, Q^{(\mathrm{N2})})$ into $V ( P, Q^{(\mathrm{N1})})$ for some cubic form $P$ by the actions of elements in $\mathrm{GL}_4 ( \overline{\mathbb{F}_{11}})$.
Put
\[
M_Q := 
\begin{pmatrix}
1 & 0 & 0 & 0\\
0 & 1/2 & 1 & 0\\
0 & 1/2 \sqrt{\epsilon} & -1/\sqrt{\epsilon} & 0\\
0 & 0 & 0 & 1
\end{pmatrix}
\]
and $P_{8+i}^{(\mathrm{N1})}:=M_Q \cdot P_i^{(\mathrm{N2})} \ (\mbox{mod } Q^{({\rm N1})})$, where ``$(\mbox{mod } Q^{({\rm N1})})$'' means here replacing $x w$ in $M_Q \cdot P_i^{(\mathrm{N2})}$ by $- 2^{-1} (Q^{({\rm N1})} - 2 x w)$ via $x w \equiv - 2^{-1} (Q^{({\rm N1})} - 2 x w) \mbox{ mod }Q^{({\rm N1})}$.
Note that each $V ( P_i^{(\mathrm{N2})}, Q^{(\mathrm{N2})} )$ is isomorphic to $V ( P_{8+i}^{(\mathrm{N1})}, Q^{(\mathrm{N1})})$ over $\overline{\mathbb{F}_{11}}$.

For $\mathcal{P}:= ( P_1^{(\mathrm{N1})}, \ldots , P_8^{(\mathrm{N1})}, P_{9}^{(\mathrm{N1})}, \ldots , P_{13}^{(\mathrm{N1})} )$, we conduct a variant of Collecting Isomorphism Classes Algorithm given in the third paragraph of Section \ref{subsec:algorithm}.
By the outputs of our computations, we have that each $V (P_i^{(\mathrm{N1})}, Q^{(\mathrm{N1})})$ is isomorphic to $V ( P_j^{(\mathrm{alc})}, Q^{(\mathrm{N1})})$ over $\overline{\mathbb{F}_{11}}$ for some $1 \leq j \leq 3$, and vice versa.
The outputs also show that $V ( P_i^{(\mathrm{alc})}, Q^{(\mathrm{N1})})$ is not isomorphic to $V ( P_j^{(\mathrm{alc})}, Q^{(\mathrm{N1})})$ over $\overline{\mathbb{F}_{11}}$ for each $1 \leq i < j \leq 3$.

(2) is proved by a computation similar to (1).
\end{proof}

\subsection{Our implementation to prove the main theorems}\label{subsec:imple}
The algorithms given in Sections \ref{subsec:algorithm} and \ref{subsec:isom} are
\begin{itemize}
\item Modified Version of Main Algorithm in \cite{KH16} (pseudocode: Algorithm \ref{alg:enume}),
\item Isomorphism Testing Algorithm (pseudocode: Algorithm \ref{alg:isomorphic_tilde}), and
\item Collecting Isomorphism Classes Algorithm (pseudocode: Algorithm \ref{alg:isomorphic_family}).
\end{itemize}
Recall that Isomorphism Testing Algorithm is a sub-procedure in Collecting Isomorphism Classes Algorithm.
The source codes and the log files are available at the web page of the first author \cite{HPkudo}.
In this subsection, we show timing and sample codes for the case of $q =11$.

\subsubsection{Timing}\label{subsubsec:timing}

We measured time used in both of Modified Version of Main Algorithm in \cite{KH16} and Collecting Isomorphism Classes Algorithm for each case.
We show the timing only for Modified Version of Main Algorithm in \cite{KH16} since most of time was used in this step.
Table \ref{timing} shows the timing of Modified Version of Main Algorithm in \cite{KH16} in our computation to show Propositions \ref{prop:N1q11} -- \ref{prop:Degenerate_q11}.
\begin{table}[h]
  \begin{center}
    \caption{Timing data of Modified Version of Main Algorithm in \cite{KH16} (pseudocode: Algorithm \ref{alg:enume}) in our computation to prove Propositions \ref{prop:N1q11} -- \ref{prop:Degenerate_q11}}\label{timing}
\vspace{0.2cm}
    \scalebox{0.8}{
    \begin{tabular}{c|l|c|c|l|c|l|l|r|r} \hline
    $q$ & Case & $s_1$ & Iterations 1 & ~~~~~$t_{mlt}$ & $s_2$ & ~~~~~$t_{GBslv}$ & ~~~~~$t_{total}$ & Iterations 2 & Total time~~~ \\ \hline
$11$ & {\bf N1} (i) & $8$ & 484 & 0.59267s & $6$ & 0.88283s & 0.88361s & 58564 & 52067.856s \\
 & & & & & & & & & (about 14.5 hours) \\
 & {\bf N1} (ii) & $9$ & 22 & 0.39381s & $5$ & 0.036043s  & 0.039275s & 292820 & 11901.708s\\ 
 & {\bf N1} (iii) & $8$ & 3456  & 0.19923s & $4$ & 0.011823s & 0.014758s & 322102& 5130.876s \\
 & {\bf N2} & $9$ & 22 & 9.1519s & $5$ & 0.16173s & 0.17191s & 638880 & 113056.172s  \\
 & & & & & & & & & (about 31.4 hours) \\
 & {\bf Dege} & $9$ & 40 & 0.39938s & $6$ & 0.13430s & 0.13676s & 48400 & 6645.091s \\ \hline
	 \end{tabular}}
 \end{center}
\end{table}

\paragraph{Table Notation}
Let $q$ denote the cardinality of $K$.
``Iterations 1'' denotes the number of iterations on $b_i$ and $a_j$ which are not regarded as indeterminates at Step (1) of Modified Version of Main Algorithm in \cite{KH16}.
Let ``$s_1$'' denote the number of indeterminates in the multiplication $(PQ)^{p-1}$ in Step (1) for each case.
We denote by ``$t_{mlt}$'' the time used in Step (1) for computing $(P Q)^{p-1}$.
Note that we regard $s_1$ coefficients in $P$ as indeterminates and thus this computation is done over a multivariate polynomial ring with the indeterminates $x$, $y$, $z$, $w$ whose ground ring is a polynomial ring of $s_1$ indeterminates.
Let ``$s_2$'' denote the number of indeterminates in the computation to solve a multivariate system in Step (3b) for each case.
The notation``$t_{GBslv}$'' is the time used in Step (3b) for solving a multivariate system over $K$.
``Iterations 2'' denotes the number of iterations on $b_i$ and $a_j$ which are not regarded as indeterminates at Step (3b).
Let ``$t_{total}$'' denote the time used in Steps (3a)-(3c) for each iteration, whereas ``Total time'' denotes the total time used in Steps (1)-(3), namely the total time taken for each case.

\paragraph{Workstation.}
We conducted the computation to prove Propositions \ref{prop:Degenerate_q5} -- \ref{prop:Degenerate_q11} by a Windows 10 home OS, 64 bit computer with 3.40 GHz CPU (Intel Core i7) and 20 GB memory.
In our implementation, the following built-in functions in Magma are called:
\begin{enumerate}
	\item \texttt{GroebnerBasis}:
Let $K$ be a computable field.
For given polynomials $f_1, \ldots , f_s$ in $R:=K [ X_1, \ldots , X_n]$, this function outputs the (reduced) Gr\"{o}bner basis of the ideal $\langle f_1, \ldots , f_s \rangle_R$ with respect to a decidable term order $\succ$. 
	\item \texttt{Variety}: Let $K$ be a computable field and $R:=K [ X_1, \ldots , X_n]$ the polynomial ring in $n$ indeterminates over $K$.
For given polynomials $f_1, \ldots , f_s \in R$ such that $V_{\overline{K}}(f_1, \ldots , f_s) = \{ (a_1, \ldots , a_n) \in \overline{K}^n : f_i (a_1, \ldots , a_n) = 0 \mbox{ for all } 1 \leq i \leq s \}$ is finite, this function outputs $V_{K}(f_1, \ldots , f_s) = \{ (a_1, \ldots , a_n) \in K^n : f_i (a_1, \ldots , a_n) = 0 \mbox{ for all } 1 \leq i \leq s \}$.
Note that this function also works for higher-dimensional ideals if $K$ is finite.
\end{enumerate}
As we will show in a piece of our codes in the next subsection, we implemented the following function as a sub-routine.
\begin{enumerate}
\item[(3)] \texttt{RestrictedVariety}: Let $K$ be a computable field and $R=K[ X_1, \ldots, X_n ]$ the polynomial ring in $n$ indeterminates over $K$.
Let $f_1, \ldots , f_s \in R$ be polynomials such that $V_{\overline{K}}(f_1, \ldots , f_s) = \{ (a_1, \ldots , a_n) \in \overline{K}^n : f_j (a_1, \ldots , a_n) = 0 \mbox{ for all } 1 \leq j \leq s \}$ is finite.
Let $\underline{i} = ( i_1, \ldots , i_n )$ be a sequence with $0$ or $1$ entries, and $k_1 < \cdots < k_{n-h(\underline{i})}$ indexes on the $0$ entries $i_k = 0$ of $\underline{i}$, where $h(\underline{i})$ denotes the Hamming weight of $\underline{i}$.
Let $c = (c_1, \ldots , c_{n-h(\underline{i})}) \in K^{n-h(\underline{i})}$ be a tuple.
Given $f_1, \ldots , f_s$, $\underline{i}$ and $c$, this function computes
\[
\{ (a_1, \ldots , a_n) \in K^n : f_j (a_1, \ldots , a_n) = 0 \mbox{ for all } 1 \leq j \leq s, \mbox{ and } a_{k_{\ell}} = c_{\ell} \mbox{ for all } 1 \leq \ell \leq n - h (\underline{i}) \}
\]
by substituting $c_{\ell}$ into $X_{k_{\ell}}$ together with the built-in function $\texttt{Variety}$.
\end{enumerate}


\subsubsection{Codes for our computation}\label{subsubsec:implementation}

\paragraph{Loading our implementation program.}

Our codes are executed on Magma as follows.
Assume that the code file \texttt{ssp\_code\_q11N1\_b1nonzero\_v2.txt} is in the directory \texttt{C:/Users}. 

\begin{framed}
\vspace{-0.5cm}
\begin{small}
\begin{verbatim}
Magma V2.22-2     Wed Jan 25 2017 12:53:45 on DESKTOP-GN95KFP [Seed = 670522909]
Type ? for help.  Type <Ctrl>-D to quit.
> load"C:/Users/ssp_code_q11N1_b1nonzero_v2.txt";
\end{verbatim}
\end{small}
\vspace{-0.5cm}
\end{framed}

\paragraph{Sample code.}

In the following, a piece of our implementation codes is given.


\begin{framed}
\vspace{-0.5cm}
\begin{small}
\begin{verbatim}
Magma V2.22-2     Wed Jan 25 2017 12:53:45 on DESKTOP-GN95KFP [Seed = 670522909]
Type ? for help.  Type <Ctrl>-D to quit.
> /*******************************************/
> RestrictedVariety:= function(P,B,ind,tup);
function>       rank_PP:=Rank(P)-#tup;
function>       PP<[s]>:=PolynomialRing(CoefficientRing(P),rank_PP,"grevlex");
function>       P_new:=ChangeRing(P,PP);
function>       B0:=[P_new!(B[i]) : i in [1..#B]];
function>       B1:=[];
function>       v:=[];
function>       ii:=0;
function>       for i in [1..#ind] do
function|for>           if ind[i] eq 1 then
function|for|if>                        v[i]:=s[i-ii];
function|for|if>                else
function|for|if>                        ii:=ii+1;
function|for|if>                        v[i]:=PP!tup[ii];
function|for|if>                end if;
function|for>   end for;
function>       for i in [1..#B0] do
function|for>           B1[i]:=Evaluate(B0[i],v);
function|for>   end for;
function>       B_new:=B1;
function>       II:=ideal<PP|B_new>;
function>       VV:=Variety(II);
function>       V_out:=[];
function>       for i in [1..#VV] do
function|for>           vi:=<>;
function|for>           k:=0;
function|for>           for j in [1..#ind] do
function|for|for>                       if ind[j] eq 1 then
function|for|for|if>                            k:=k+1;
function|for|for|if>                            vi:=Append(vi,VV[i][k]);
function|for|for|if>                    else
function|for|for|if>                            vi:=Append(vi,tup[j-k]);
function|for|for|if>                    end if;
function|for|for>               end for;
function|for>           V_out[i]:=vi;
function|for>   end for;
function>       return V_out;
function> end function;
> //-------------------------------------------
> p:=11;
> q:=p;
> K:=GF(q);
> count_HW0:=0;
> b1:=K!1; b2:=K!0;
> a1:=K!1; a2:=K!0;
> s1:=8;
> s2:=6;
> R<[t]>:=PolynomialRing(K,s1,"grevlex");
> S<x,y,z,w>:=PolynomialRing(R,4,"grevlex");
> exponents_set:=[
> [ 2*p-2, p-1, p-1, p-1],
> [ 2*p-1, p-2, p-1, p-1],
> [ 2*p-1, p-1, p-2, p-1],
> [ 2*p-1, p-1, p-1, p-2],
> [ p-1, 2*p-2, p-1, p-1],
> [ p-2, 2*p-1, p-1, p-1],
> [ p-1, 2*p-1, p-2, p-1],
> [ p-1, 2*p-1, p-1, p-2],
> [ p-1, p-1, 2*p-2, p-1],
> [ p-2, p-1, 2*p-1, p-1],
> [ p-1, p-2, 2*p-1, p-1],
> [ p-1, p-1, 2*p-1, p-2],
> [ p-1, p-1, p-1, 2*p-2],
> [ p-2, p-1, p-1, 2*p-1],
> [ p-1, p-2, p-1, 2*p-1],
> [ p-1, p-1, p-2, 2*p-1]];
> not_vanished_monomials:=
> {@ x^(E[1])*y^(E[2])*z^(E[3])*w^(E[4]) : E in exponents_set @};
> Coeff_set:=MonomialsOfDegree(R,1);
> f:= x^2*y + b1*x^2*z + b2*x*z^2 + a1*y^3 + a2*y^2*z;
> g:= 2*x*w + 2*y*z;
> Mono_set_deg3_unknown:={@ y*z^2, y^2*w, y*z*w, z^2*w, y*w^2, z^3, z*w^2, w^3 @}; // 8
> // 8 = s1 - s2
> // a3, a5, a6, a7, a8, a4, a9, a10
> for i in [1..#Mono_set_deg3_unknown] do
for> f:= f + S!(Coeff_set[i])*(Mono_set_deg3_unknown[i]);
for> end for;
> f1:=f^(p-1);
> g1:=g^(p-1);
> h:=f1*g1;
> for a3 in K do
for> for a7 in K do
for|for> F:=[];
for|for> for i in [1..#(not_vanished_monomials)] do
for|for|for>     F[i]:=MonomialCoefficient(h,not_vanished_monomials[i]);
for|for|for> end for;
for|for> ind:=[0,1,1,0,1,1,1,1]; // a3, a5, a6, a7, a8, a4, a9, a10
for|for> tup:=[a3,a7];
for|for> V:=RestrictedVariety(R,F,ind,tup);
for|for> count_HW0:=count_HW0 + #V;
for|for> end for; // a7
for> end for; // a3
> count_HW0;
8
\end{verbatim}
\end{small}
\vspace{-0.5cm}
\end{framed}

In the above piece of codes, we compute roots of a multivariate system constructed from our criterion for superspecialty  for Case {\bf (N1)} (i) for $q=11$ and certain fixed coefficients.
Here the notation are the same as in Proposition \ref{prop:N1q11}.
For $( b_1, b_2) = (1, 0)$ and $(a_1, a_2) = ( 1, 0)$, we seek all the tuples $( a_3, a_4, a_5, a_6, a_7, a_8, a_9, a_{10}) \in (\mathbb{F}_{11})^{\oplus 8}$ of coefficients in $P$ such that the Hasse-Witt matrix of $C = V ( P, Q)$ is zero.
From the final output, one has that the number of roots $( a_3, \ldots , a_{10})$ is $8$.


\appendix


\section{Pseudocodes}\label{sec:code}

In this appendix, we collect the pseudocodes of the algorithms proposed in Section \ref{sec:main_results}.
For the notation in each code, see Section \ref{sec:main_results}.
The algorithms are
\begin{itemize}
\item Modified Version of Main Algorithm in \cite{KH16} (pseudocode: Algorithm \ref{alg:enume}),
\item Isomorphism Testing Algorithm (pseudocode: Algorithm \ref{alg:isomorphic_tilde}), and
\item Collecting Isomorphism Classes Algorithm (pseudocode: Algorithm \ref{alg:isomorphic_family}).
\end{itemize}
Recall that Isomorphism Testing Algorithm is a sub-procedure in Collecting Isomorphism Classes Algorithm.

\begin{algorithm}[htb] 
\caption{$\texttt{EnumerateSSpCurves} ( Q, P, p)$}
\label{alg:enume}
\begin{algorithmic}[1]
\REQUIRE{A quadratic form $Q$ in $S = \mathbb{F}_q [x, y, z, w]$, a cubic form $P$ of the form \eqref{input_P} in $\mathbb{F}_q [a_1, \ldots , a_t] [x,y,z,w]$, and the characteristic $p$ of $\mathbb{F}_q$}
\ENSURE{A list $\mathcal{P}$ of cubics $P$ such that the curves $C = V ( P, Q )$ are superspecial}
\STATE $\mathcal{P}$ $\leftarrow$ $\emptyset$
\STATE $\mathcal{M}$ $\leftarrow$ the set of the $16$ monomials given in Corollary \ref{cor:HW}
\STATE Choose $0 \leq s_1 \leq t$ and $\{ k_1, \ldots , k_{s_1} \} \subset \{ 1, \ldots , t \}$
\STATE $t_1$ $\leftarrow$ $t-s_1$; Write $\{ 1, \ldots , t \} \smallsetminus \{ k_1, \ldots , k_{s_1} \} = \{ k_1^{\prime}, \ldots , k_{t_1}^{\prime} \}$
\STATE Choose $\mathcal{A}_1 \subset \mathbb{F}_q^{\oplus t_1}$
\FOR{$( a_{k_1^{\prime}}, \ldots , a_{k_{t_1}^{\prime}} ) \in \mathcal{A}_1$}
	\STATE Substitute $( a_{k_1^{\prime}}, \ldots, a_{k_{t_1}^{\prime}} )$ to $P$ /* Keep $a_{k_1}, \ldots , a_{k_{s_1}}$ being indeterminates*/
	\STATE $h$ $\leftarrow$ $( P Q )^{p-1}$
	\STATE Choose $0 \leq s_2 \leq s_1$ and $\{ i_1, \ldots , i_{s_2} \} \subset \{ k_1, \ldots , k_{s_1} \}$
	\STATE $t_2$ $\leftarrow$ $s_1-s_2$; Write $\{ k_1, \ldots , k_{s_1} \} \smallsetminus \{ i_1, \ldots , i_{s_2} \} = \{ j_1, \ldots , j_{t_2} \}$
	\STATE Choose $\mathcal{A}_2 \subset \mathbb{F}_q^{\oplus t_2}$
	\FOR{$\left( a_{j_1}, \ldots , a_{j_{t_2}} \right) \in \mathcal{A}_2$}
		\STATE Substitute $\left( a_{j_1}, \ldots, a_{j_{t_2}} \right)$ to $P$ /* Keep $a_{i_1}, \ldots , a_{i_{s_2}}$ being indeterminates*/
		\STATE $\mathcal{S}$ $\leftarrow$ $\{ \mbox{the coefficient of } x^k y^{\ell} z^m w^n : x^k y^{\ell} z^m w^n \in \mathcal{M} \}t$
		\STATE $I$ $\leftarrow$ the ideal $\langle \mathcal{S} \rangle \subset \mathbb{F}_q [ a_{i_1}, \ldots, a_{i_{s_2}} ]$
		\STATE Choose a term ordering on $a_{i_1}, \ldots, a_{i_{s_2}}$
		\STATE Solve the system $f = 0$ for all $f \in \mathcal{S}$ over $K$ by some known algorithm with $\succ$
		\STATE $V$ $\leftarrow$ $V(I) = \{ ( a_{i_1}, \ldots , a_{i_{s_2}} ) \in \mathbb{F}_q^{\oplus s_2} : f \left( a_{i_1}, \ldots, a_{i_{s_2}} \right) = 0 \mbox{ for all } f \in \mathcal{S} \}$
		\IF{$V \neq \emptyset$}
			\FOR{$\left( a_{i_1}, \ldots, a_{i_{s_2}} \right) \in V$}
				\STATE Substitute $\left( a_{i_1}, \ldots, a_{i_{s_2}} \right)$ to $P$ /* Then $P \in \mathbb{F}_q[x,y,z,w]$ */
				\STATE Decide whether $V ( P, Q )$ is non-singular by the non-singularity testing in Section \ref{subsec:singtest}
				\IF{$V ( P, Q )$ is non-singular}
					\STATE $\mathcal{P}$ $\leftarrow$ $\mathcal{P} \cup \{ P \}$
				\ENDIF
			\ENDFOR
		\ENDIF
	\ENDFOR
\ENDFOR
\RETURN $\mathcal{P}$
\end{algorithmic}
\end{algorithm}

\paragraph{``Mod $Q$'' function}
Let $Q$ be an irreducible quadratic form in $K [x,y,z,w]$, $m_Q$ a monomial in $Q$, and $c_Q$ its coefficient.
Given a cubic form $P \in K [x,y,z,w]$, we give a function to compute ``$P \mbox{ mod } Q$'', where $\mbox{mod }Q$ means here replacing $m_Q$ in $P$ by $- c_Q^{-1} (Q - c_Q m_Q)$ via $m_Q \equiv - c_Q^{-1} (Q - c_Q m_Q) \mbox{ mod }Q$.
In other words, we compute a cubic form $P^{\prime}$ with no monomial divided by $m_Q$ such that $P \equiv P^{\prime} \mbox{ mod } Q$.
This function shall be used as a sub-routine in Algorithm \ref{alg:isomorphic_tilde}.
In Algorithm \ref{alg:modQ}, we write down a pseudocode of the function.

\begin{algorithm}[htb] %
\caption{$\texttt{ModQuad} ( P, Q, m_Q )$}
\label{alg:modQ}
\begin{algorithmic}[1]
\REQUIRE{A cubic form $P \in K [x,y,z,w]$, a quadratic form $Q$ and a monomial $m_Q$ in $Q$}
\ENSURE{A cubic form $P^{\prime}$ with no monomial divided by $m_Q$ such that $P \equiv P^{\prime} \mbox{mod } Q$}
\STATE $c_Q$ $\leftarrow$ the coefficient of $m_Q$ in $Q$
\STATE Write $P = P_0 + m_Q R$ for a cubic form $P_0$ with no term containing $m_Q$, and a linear form $R$
\STATE $P^{\prime}$ $\leftarrow$ $P_0 - c_Q^{-1} (Q - c_Q m_Q) R$
\RETURN $P^{\prime}$
\end{algorithmic}
\end{algorithm}

\begin{algorithm}[htb] %
\caption{$\texttt{IsIsomorphicN1} ( P_1, P_2, q )$}
\label{alg:isomorphic_tilde}
\begin{algorithmic}[1]
\REQUIRE{Two cubic forms $P_1$ and $P_2$ in $\mathbb{F}_q [x,y,z,w]$, and $q = p^s$ a power of a prime $p$}
\ENSURE{``\textbf{ISOMORPHIC}'' or ``\textbf{NOT ISOMORPHIC}''}
\STATE $IsomorphicFlag$ $\leftarrow$ $0$
\FOR{$M_{\rm A} \in \mathrm{A}$}
	\FOR{$M_{\rm W} \in \mathrm{W}$}
		\STATE $M_{\tilde{\rm T}}$ $\leftarrow$ $\mathrm{diag} (t_1, t_2, t_3 t_2^{-1}, t_3 t_1^{-1})$
		\STATE $g$ $\leftarrow$ $M_{\rm A} \cdot M_{\tilde{\rm T}} \cdot U_1 (t_4) \cdot U_2 (t_5) \cdot M_{\rm W} \cdot U_1 (t_6) \cdot U_2 (t_7)$
		\STATE Construct a system of algebraic equations with indeterminates $t_i$'s and $\lambda$
		\STATE $P_3$ $\leftarrow$ $\texttt{ModQuad} ( g \cdot P_1 - \lambda P_2, Q, x w)$
	 	\STATE $\mathcal{S}$ $\leftarrow$ $\{ t_i^{q-1} - 1 : 1 \leq i \leq 3 \} \cup \{ t_j^q - t_j : 4 \leq j \leq 7 \} \cup \{ \lambda^{q-1} - 1 \}$
	 	\STATE $\mathrm{Mon}(P_3)$ $\leftarrow$ the set of the monomials in $P_3$
		\FOR{$x^k y^{\ell} z^m w^n \in \mathrm{Mon}(P_3)$}
			\STATE $f  (t_1, t_2, t_3, t_4, t_5, t_6, t_7, \lambda)$ $\leftarrow$ the coefficient of $x^k y^{\ell} z^m w^n$ in $P_3$
			\STATE $\mathcal{S}$ $\leftarrow$ $\mathcal{S} \cup \{ f (t_1, t_2, t_3, t_4, t_5, t_6, t_7, \lambda) \}$
		\ENDFOR
		\STATE $G$ $\leftarrow$ the reduced Gr\"{o}bner basis for $\langle \mathcal{S} \rangle \subset \mathbb{F}_q [ t_1, t_2, t_3, t_4, t_5, t_6, t_7, \lambda ]$
		\IF{$\sharp G \neq 1$}
			\STATE $IsomorphicFlag$ $\leftarrow$ $1$, \textbf{break} $M_{\rm W}$ and $M_{\rm A}$
		\ENDIF
	\ENDFOR
\ENDFOR
\IF{$IsomorphicFlag = 0$}
	\RETURN ``\textbf{NOT ISOMORPHIC}''
\ELSE
	\RETURN ``\textbf{ISOMORPHIC}''
\ENDIF
\end{algorithmic}
\end{algorithm}

\begin{algorithm}[t] %
\caption{$\texttt{NotIsomorphicListN1} ( \mathcal{P}, q )$}
\label{alg:isomorphic_family}
\begin{algorithmic}[1]
\REQUIRE{A list $\mathcal{P} = ( P_1, \ldots , P_t )$ of cubics in $\mathbb{F}_q [x,y,z,w]$, and $q = p^s$ a power of a prime $p$}
\ENSURE{A family $\mathcal{P}^{\prime \prime}$ of cubics in $\mathbb{F}_q [x,y,z,w]$}
\STATE $\mathcal{P}^{\prime}$ $\leftarrow$ $\emptyset$, $FlagList1$ $\leftarrow$ $( 0 )_{i=1}^t$
\FOR{$i=1$ \TO $t$}
	\IF{$FlagList1[i] = 0$}
		\STATE $\mathcal{P}^{\prime}$ $\leftarrow$ $\mathcal{P}^{\prime} \cup \{ P_i \}$
		\FOR{$j=i+1$ \TO $t$}
			\STATE Use Algorithm \ref{alg:isomorphic_tilde} not regarding $t_3$ as an indeterminate, but taking $t_3=1$
			\IF{$\texttt{IsIsomorphicN1} ( P_i, P_j, q )$ returns \textbf{ISOMORPHIC}}
				\STATE $FlagList1[j]$ $\leftarrow$ $1$
			\ENDIF
		\ENDFOR
	\ENDIF
\ENDFOR
\STATE $\mathcal{P}^{\prime \prime}$ $\leftarrow$ $\emptyset$, $FlagList2$ $\leftarrow$ $( 0 )_{i=1}^{\sharp \mathcal{P}^{\prime}}$
\FOR{$i=1$ \TO $\sharp \mathcal{P}^{\prime}$}
	\IF{$FlagList2[i] = 0$}
		\STATE $\mathcal{P}^{\prime \prime}$ $\leftarrow$ $\mathcal{P}^{\prime \prime} \cup \{ \mathcal{P}^{\prime} [i] \}$
		\FOR{$j=i+1$ \TO $t$}
			\STATE Use Algorithm \ref{alg:isomorphic_tilde} regarding all $t_i$'s as indeterminates 
			\IF{$\texttt{IsIsomorphicN1} ( \mathcal{P}^{\prime} [i], \mathcal{P}^{\prime} [j], q )$ returns \textbf{ISOMORPHIC}}
				\STATE $FlagList2[j]$ $\leftarrow$ $1$
			\ENDIF
		\ENDFOR
	\ENDIF
\ENDFOR
\RETURN $\mathcal{P}^{\prime \prime}$
\end{algorithmic}
\end{algorithm}

\end{document}